\renewcommand{\hat}{\widehat}
\renewcommand{\tilde}{\widetilde}
\newcommand{\bN}{\ensuremath{\mathbb{N}}}
\newcommand{\bR}{\ensuremath{\mathbb{R}}}
\newcommand{\cA}{\ensuremath{\mathcal{A}}}
\newcommand{\cB}{\ensuremath{\mathcal{B}}}
\theoremstyle{plain}
\newtheorem{Thm}{Theorem}[section]
\newtheorem{Lem}[Thm]{Lemma}
\newtheorem{Cor}[Thm]{Corollary}
\theoremstyle{definition}
\newtheorem{Asmp}[Thm]{Assumption}
\newtheorem{Rem}[Thm]{Remark}
\newtheorem{Ex}[Thm]{Example}
\numberwithin{equation}{section}
\begin{document}

\title{Large deviations for occupation and waiting times of infinite ergodic transformations}

\author{Toru Sera}
%\footnote{Department of Mathematics, Graduate School of Science, Osaka University. Toyonaka, Osaka 560-0043, JAPAN. Email address: sera@math.sci.osaka-u.ac.jp}	

\begin{abstract}
We establish large deviation estimates related to the Darling--Kac theorem and generalized arcsine laws for occupation and waiting times of ergodic transformations preserving an infinite measure, such as non-uniformly expanding interval maps with indifferent fixed points. For the proof, we imitate the study of generalized arcsine laws for occupation times of one-dimensional diffusion processes and adopt a method of double Laplace transform.
\end{abstract}

\address[Toru Sera]{Department of Mathematics, Graduate School of Science, Osaka University,
Toyonaka, Osaka 560-0043, Japan.}
\email{sera@math.sci.osaka-u.ac.jp}

\subjclass{Primary 37A40; Secondary 37A50, 60F10}
\keywords{infinite ergodic theory, large deviation estimates, Darling--Kac theorem, generalized arcsine laws}

\maketitle

%%%%% text %%%%%

%%%%%%%%%%%%%%%%%%%%%%%%%%%%%%%%%%%%%%%%%%%%%%%%%%%%%%%%%%%%%%%%%
%%%%%%%%%%%%%%%%%%%%%%%%%%%%%%%%%%%%%%%%%%%%%%%%%%%%%%%%%%%%%%%%%
%%%%%%%%%%%%%%%%%%%%%%%%%%%%%%%%%%%%%%%%%%%%%%%%%%%%%%%%%%%%%%%%%
\section{Introduction}

In the study of dynamical systems with an infinite invariant measure, a variety of ergodic and probabilistic limit theorems have been established. They are often related to classical limit theorems for renewal, Markov or diffusion processes in probability theory.
Among this kind of research of dynamical systems, we are going to focus on three distributional limit theorems, the Darling--Kac theorem for occupation times of sets of finite measure, 
 the Dynkin--Lamperti generalized arcsine law for the last time the orbit visits to  sets of finite measure, and 
the Lamperti generalized arcsine law for occupation times of sets of infinite measure, studied by \cite{Aa81, Aa86, Tha98, Tha02, ThZw, Zwe07cpt, KocZwe, OwaSam,  SerYan19, Ser20}. The aim of the present paper is to establish large deviation estimates related to these limit theorems under  similar abstract settings as in \cite{ThZw, Zwe07cpt, KocZwe, SerYan19}. Our abstract results can be applied to, for examples, intermittent maps, that is, non-uniformly expanding interval maps with indifferent fixed points.
We are motivated by the study of a large deviation estimate related to a generalized arcsine law for occupation times of one-dimensional diffusion processes \cite{KasYan}.
We also refer the reader to \cite{RYZ} for another type of large deviations, which is related to the strong arcsine law for a one-dimensional Brownian motion.
Let us illustrate earlier study by giving an example as in \cite[Example 1.1]{ThZw}.

\begin{Ex}[Distributional limit theorems for Boole's transformation]\label{Ex:boole}
We refer the reader to \cite{AdlWei, Aa97, Tha02} for the details of Boole's transformation.
The map $T:[0,1]\to[0,1]$ given by
\begin{align}
     Tx=
     \begin{cases}
     x(1-x)/(1-x-x^2), &x\in[0,1/2],
     \\
     1-T(1-x), &x\in(1/2,1],	
     \end{cases}
\end{align}
is conjugated to Boole's transformation $\tilde{T}x=x-x^{-1}$ $(x\in\bR\setminus\{0\})$. Indeed, let $\phi(x)=(1-x)^{-1}-x^{-1}$ $(x\in(0,1))$, then $\tilde{T}=\phi\circ T \circ \phi^{-1}$ on $\bR\setminus \{0\}$.
It is easy to see that $T0=0, T1=1$, $T'(0)=T'(1)=1$, $T''>0$ on $(0,1/2)$ and $T''<0$ on $(1/2,1)$. In addition we have $Tx-x = 1-x -T(1-x)\sim x^3 $ $(x\to0)$.
Thus $T$ is a special case of Thaler's maps, which will be explained in Section \ref{Sec:Thaler}.
The map $T$ admits the invariant density $h$ given by
\begin{align}
    h(x)
    =\frac{1}{x^2}+\frac{1}{(1-x)^2},
    \quad
    x\in(0,1).	
\end{align}
Therefore the invariant measure $\mu$ given by $d\mu(x)=h(x)\,dx$ $(x\in[0,1])$ is an infinite measure.	
Set $\gamma=\sqrt{2}-1\in(0,1/2)$, which is a $2$-periodic point of $T$. Indeed, $T\gamma=1-\gamma\in(1/2,1)$ and hence $T^2\gamma=\gamma$. Let
\begin{align}
   A_0=[0,\gamma),\quad
   Y=[\gamma,T\gamma],\quad
   A_1=(T\gamma,1].	
\end{align}
Then $\mu(Y)=\sqrt{2}$ and $\mu(A_0)=\mu(A_1)=\infty$. In addition, $Y$ dynamically separates $A_0$ and $A_1$, that is, $A_i\cap T^{-1}A_j=\emptyset$ $(i\neq j)$. 
For a non-negative integer $n$, a Borel subset $A\subset [0,1]$ and $x\in[0,1]$, set
\begin{align}
   S_n^A(x)=\sum_{k=1}^{n}1_A(T^k x),
   \quad
   Z_n^A(x)=\max\{k\leq n\::\:T^k x\in A\}.	
\end{align}
In other words, $S_n^A(x)$ denotes the occupation time on $A$ of the orbit $\{T^k x\}_{k\geq0}$ between time $1$ and $n$, and $Z_n^A(x)$ denotes the last time the orbit arrives in $A$ until time $n$. 
Fix any Borel probability measure $\nu(dx)$ absolutely continuous with respect to the Lebesgue measure on $[0,1]$.
We interpret $x$ as the initial point of the orbit $\{T^k x\}_{k\geq0}$, and $\nu(dx)$ as the initial distribution of the orbit.
Then the Darling--Kac theorem \cite{Aa81, Aa86} yields that, as $n\to\infty$,
\begin{align}\label{boole-DK}
	\nu\bigg(\frac{\pi S_n^Y}{2\sqrt{n}}\leq t\bigg)
    \to
	\frac{2}{\pi}\int_0^t e^{-y^2/\pi}\,dy,
	\quad t\geq0.
\end{align}
Next,  the Dynkin--Lamperti generalized arcsine law for waiting time \cite{Tha98} shows that, as $n\to\infty$,
\begin{align}\label{boole-DL}
	\nu\bigg(\frac{Z_n^Y}{n}\leq t\bigg)
	\to
	\frac{2}{\pi}\arcsin\sqrt{t},
	\quad
	t\in[0,1].
\end{align}
And finally, the Lamperti generalized arcsine law for occupation time \cite{Tha02} implies that, as $n\to\infty$,
\begin{align}\label{boole-L}
	\nu\bigg(\frac{S_n^{A_i}}{n}\leq t\bigg)
	\to
	\frac{2}{\pi}\arcsin\sqrt{t},
	\quad
	t\in[0,1],
	\;i=0,1.
\end{align}
We also remark that convergence rates of \eqref{boole-DK} and \eqref{boole-DL} were also studied in \cite{MelTer, Ter14, Ter15, Ter16}, and a large deviation estimate for the Perron--Frobenius operator related to \eqref{boole-DL} can be found in \cite{Tha05}.
\end{Ex}

We now illustrate our main results. Our aim is to estimate
the left-hand sides of \eqref{boole-DK}, \eqref{boole-DL} and \eqref{boole-L} with $t\to0$.

\begin{Ex}[Large deviation estimates for Boole's transformation]\label{Ex:boole-2}
Under the setting of Example \ref{Ex:boole}, 
we further assume that $\nu$ is a probability measure supported on $[\varepsilon, 1-\varepsilon]$ for some $\varepsilon\in(0,1/2)$, and admits a Riemann integrable density. Then there exists some constants $0<C_1\leq C_2<\infty$ such that, for any positive-valued sequence $\{c(n)\}_{n\geq0}$ with $c(n)\to0$ and $c(n)n\to\infty$ $(n\to\infty)$, the following three estimates hold: 
\begin{align}
	C_1
	&\leq \liminf_{n\to\infty}\frac{1}{\sqrt{c(n)}}\nu\bigg(\frac{ S_n^Y }{\sqrt{n}}\leq \sqrt{c(n)}\bigg)
	\\
	&\leq
	\limsup_{n\to\infty}\frac{1}{\sqrt{c(n)}}\nu\bigg(\frac{ S_n^Y}{\sqrt{n}}\leq \sqrt{c(n)}\bigg)
	\leq C_2,
	\label{boole-LD-1}
\end{align}
and
\begin{align}
	C_1
	&\leq \liminf_{n\to\infty}\frac{1}{\sqrt{c(n)}}
	\nu\bigg(\frac{Z_n^Y}{n}\leq c(n)\bigg)
	\\
	&\leq \limsup_{n\to\infty}\frac{1}{\sqrt{c(n)}}\nu\bigg(\frac{Z_n^Y}{n}\leq c(n)\bigg)
	\leq C_2,
	\label{boole-LD-2}
\end{align}
and
\begin{align}
C_1
	&\leq \liminf_{n\to\infty}\frac{1}{\sqrt{c(n)}}
	\nu\bigg(\frac{S_n^{A_i}}{n}\leq c(n)\bigg)
	\\
	&\leq \limsup_{n\to\infty}\frac{1}{\sqrt{c(n)}}\nu\bigg(\frac{S_n^{A_i}}{n}\leq c(n)\bigg)
	\leq C_2,
	\quad i=0,1.
	\label{boole-LD-3}	
\end{align}
Note that $C_1$ and $C_2$ may depend on $\nu$.
These estimates seem to be compatible with \eqref{boole-DK}, \eqref{boole-DL} and \eqref{boole-L} respectively, since
the right-hand side of \eqref{boole-DK} with $t=\sqrt{c(n)}$ and those of \eqref{boole-DL} and \eqref{boole-L} with $t=c(n)$ are asymptotically equal to $(2/\pi)\sqrt{c(n)}$, as $n\to\infty$. Nevertheless, \eqref{boole-DK}, \eqref{boole-DL} and \eqref{boole-L} do not imply \eqref{boole-LD-1}, \eqref{boole-LD-2} and \eqref{boole-LD-3}  directly.

\end{Ex}

For the proof, we adopt a method of double Laplace transform as in \cite{SerYan19}, imitating the study of generalized arcsine laws for occupation times of one-dimensional diffusion processes \cite{BPY, Wat95, RYZ, KasYan, Ya.Y17}. Although moment methods were used in \cite{Tha98, Tha02, ThZw, Zwe07cpt, KocZwe}, double Laplace transform is more adequate for our large deviation estimates. 
For example, the probability $\nu(Z_n^Y/n\leq c(n))$ in Example \ref{Ex:boole-2} has a negligibly small contribution to the $k$th moment $\int (Z_n^Y/c(n)n)^k\,d\nu$ $(k=1,2,\dots)$, while it has large contributions to Laplace transform
\begin{align}
 \int \exp\bigg(-\frac{\lambda Z_n^Y}{c(n)n}\bigg)\,d\nu
 \quad(\lambda>0)
 \end{align}
 and double Laplace transform
 \begin{align}
 \int_0^\infty du \,e^{-qu}\int d\nu\, 
 \exp\bigg(-\frac{\lambda Z_{[un]}^Y}{c(n)n}\bigg)\quad (q,\lambda>0).
 \end{align} 
 This is why we adopt a method of double Laplace transform rather than moment methods in order to estimate $\nu(Z_n^Y/n\leq c(n))$.

This paper is organized as follows.
 In Section \ref{Sec:Pre} we recall some basic notions of infinite ergodic theory and the theory of regular variation. 
In Section \ref{Sec:Main} we formulate large deviation estimates related to the Darling--Kac theorem and generalized arcsine laws in abstract settings. 
Section \ref{Sec:Ana} is devoted to introduce some lemmas needed to calculate double Laplace transform. 
In Sections \ref{Sec:proof-Z}, \ref{Sec:proof-SY} and \ref{Sec:proof-SA} we prove the large deviation estimates by using double Laplace transform.
In Section \ref{Sec:Thaler} we apply our abstract results to Thaler's maps.

\section{Preliminaries}\label{Sec:Pre}
Before presenting our main results, let us recall basic concepts of infinite ergodic theory.
We basically follow the settings of \cite{ThZw, Zwe07cpt, KocZwe, SerYan19}. We also refer the reader to \cite{Aa97} for the foundations of infinite ergodic theory. 

Let $\bN$ denote the set of all positive integers, and set $\bN_0=\bN\cup\{0\}$.
We always assume $(X,\cA, \mu)$ is a $\sigma$-finite measure space with $\mu(X)=\infty$, and 
a measurable map $T:X \to X$ is a conservative, ergodic, measure-preserving transformation on $(X,\cA,\mu)$, which is abbreviated as \emph{CEMPT}. 
For  $A\in \cA$, we write $1_A$ for the indicator function of $A$. 
Since $T$ is a CEMPT,  we have $\sum_{n\geq 0} 1_A\circ T^{n} =\infty$, a.e.\ for any $A\in\cA$ with $\mu(A)>0$. 
In other words, the orbit $\{T^n x\}_{n\geq0}$ visits $A$ infinitely often for $\mu$-almost every initial point $x$.
For $u\in L^1(\mu)$, define the signed measure $\mu_u$ on $(X,\cA)$ as $\mu_u(A)=\int_A u\,d\mu$ $(A\in \cA)$. 
The \emph{transfer operator} $\hat{T}:L^1(\mu)\to L^1(\mu)$ is 
defined by $\hat{T}u=d(\mu_u\circ T^{-1})/d\mu$ $(u\in L^1(\mu))$. 
This operator is
characterized by the equation $ \int_X (v\circ T) u \,d\mu =
    \int_X v(\hat{T}u)\,d\mu$
for any $v\in L^{\infty}(\mu)$ and $u\in L^1 (\mu)$. 
The domain of $\hat{T}$ can be extended to all  non-negative, measurable functions $u:X\to [0,\infty)$. 
Then $\int_X \hat{T}u \, d\mu=\int_X u \, d\mu$ for any non-negative, measurable function $u$.

We need to extend the concept of uniform sweeping of \cite{Zwe07cpt} slightly.
Let $\mathfrak{H}\cup\{G\}$ be a family of some non-negative, measurable functions $H:X\to [0,\infty)$. 
We say $\mathfrak{H}$ is \emph{uniformly sweeping (in $K$ steps) for $G$} if the following condition holds:  there exist some constants $C>0$ and $K\in\bN_0$ such that, for any $H\in\mathfrak{H}$, we have $C\sum_{k=0}^K \hat{T}^k H\geq G$ a.e.
In addition, if $\mathfrak{H}=\{H\}$, then we say $H$ is \emph{uniformly sweeping (in $K$ steps) for $G$}.

Let us recall regularly and slowly varying functions. We refer the reader to \cite{BGT} for the details. 
Let $f, g: (0,\infty)\to(0,\infty)$ be positive, measurable functions. 
If $f(t)/g(t)\to c\in[0,\infty]$ $(t\to t_0)$, then we write $f(t)\sim c g(t)$ ($t\to t_0$).
 We say $f$ is \emph{regularly varying of index $\rho\in \bR$ at $\infty$} (respectively, \emph{at $0$}) if, for any $\lambda>0$,
\begin{align*}
f(\lambda t) \sim \lambda^\rho f(t) \quad\text{$(t\to\infty)$ (respectively, $t\to 0+$)}. 	
\end{align*}
In the case $\rho=0$, we say $f$ is \emph{slowly varying at $\infty$} (respectively, \emph{at $0$}). 
A positive-valued sequence $\{a(n)\}_{n\geq 0}$ is called regularly varying of index $\rho$ if the function $a( [t])$ is regularly varying of index $\rho$ at $\infty$. Here $[t]$ denotes the greatest integer which is less than or equal to $t$.
%We denote by $\cR_\rho(\infty)$ (respectively, $\cR_\rho(0+)$) the set of regularly varying functions or sequences of index $\rho$ at $\infty$ (respectively, at $0$). 

%If $f$ is a probability density with respect to $\mu$, then $f d\mu$ can be interpreted as the distribution of the initial point $x$, while $(\hat{T}f) d\mu$ can be interpreted as the distribution of $T x$.

Fix a reference set $Y\in \cA$ with $\mu(Y)\in(0,\infty)$ from now on. Let $\varphi:X\to \bN\cup\{\infty\}$ be a \emph{first return time} to $Y$, that is, 
\begin{equation}
\varphi(x)=\min\{k\geq1\::\:T^k x\in Y\}\quad(x\in X).
\end{equation}
Here it is understood that $\min\emptyset=\infty$. 
Define disjoint sets $Y_0, Y_1, Y_2,\ldots\in \cA$ as
\begin{equation}
	Y_0=Y,\quad  Y_n= (T^{-n}Y)\setminus \bigg(\bigcup_{k=0}^{n-1}T^{-k}Y \bigg)=Y^c\cap\{\varphi=n\} \quad (n\in\bN).
\end{equation}
As proved in \cite[(2.3)]{ThZw},  
\begin{equation}
	 \label{eq:1Yn}
	1_{Y_n}
	=
	\sum_{k>n}\hat{T}^{k-n}1_{Y\cap \{\varphi=k\}}
	\quad
	\text{a.e.\;\;($n\in\bN_0$)},
\end{equation}
and $\mu(Y_n)=\mu(Y\cap\{\varphi>n\})$.
Let $\{w_n^Y\}_{n\geq0}$ denote the \emph{wandering rate} of $Y$, which is given by
\begin{equation}
	w_n^Y
	=
	\mu\bigg(\bigcup_{k=0}^{n-1}T^{-k}Y\bigg)
	=
	\sum_{k=0}^{n-1}\mu(Y_k)
   = \sum_{k=0}^{n-1} \mu(Y\cap\{\varphi>k\})
   \quad (n\in\bN_0).
\end{equation}
Since $T$ is a CEMPT, we see $\bigcup_{n\geq0}T^{-n}Y=X$, a.e.\ and hence $w_n^Y\to\infty$  ($n\to\infty$).
For $s>0$, let $Q^Y(s)$  be  Laplace transform of $\{w_{n+1}^Y-w_n^Y\}_{n\geq0}$:
\begin{equation}
	Q^Y(s)=\sum_{n\geq0} e^{-ns}\big(w_{n+1}^Y-w_n^Y\big)
	=\sum_{n\geq0}e^{-ns}\mu(Y\cap\{\varphi>n\})
	\quad
	(s>0).
\end{equation}
Then $0<Q^Y(s)<\infty$ and $Q^Y(s)\to \infty$ ($s\to 0+$). 
Let $\alpha\in(0,1)$ and let $\ell:(0,\infty)\to(0,\infty)$ be a positive, measurable function slowly varying at $\infty$.
By Karamata's Tauberian theorem \cite[Theorem 1.7.1]{BGT}, the condition 
\begin{equation}
	w_n^Y
	\sim
	n^{1-\alpha}\ell(n)
	\quad
	(n\to\infty)
\end{equation}
is equivalent to
\begin{align}\label{eq:RV:Q}
	 Q^Y(s)\sim \Gamma(2-\alpha) s^{-1+\alpha} \ell(s^{-1})
	 \quad
	 (s\to 0+).
\end{align}
Here $\Gamma(z)=\int_0^\infty e^{-t}t^{-1+z}dt$ ($z>0$) denotes the gamma function. 

If $\{(w_n^Y)^{-1}\sum_{k=0}^{n-1}\hat{T}^k 1_{Y_k}\}_{n\geq1}$  converges in $L^\infty(\mu)$ as $n\to\infty$,
then we call the limit function $H\in L^\infty(\mu)$ as the \emph{asymptotic entrance density} of $Y$. Since $(w_n^Y)^{-1}\sum_{k=0}^{n-1}\hat{T}^k 1_{Y_k}$ is a $\mu$-probability density function supported on $Y$, so is $H$. Let $G\in\{u\in L^1(\mu)\::\:u\geq0\}$. Then $H$ is uniformly sweeping in $K$ steps for $G$ if and only if there exists $N\in\bN$ such that $\{(w_n^Y)^{-1}\sum_{k=0}^{n-1}\hat{T}^k 1_{Y_k}\}_{n\geq N}$ is uniformly sweeping in $K$ steps for $G$.

\section{Main results}\label{Sec:Main}

In the following, we are going to formulate three types of large deviation estimates, which are related to already-known distributional limit theorems.

\subsection{Large deviation estimates related to the Dynkin--Lamperti generalized arcsine law}

Let $u:X\to[0,\infty)$ be a non-negative, $\mu$-integrable function. Recall $\mu_u$ is defined as the $\mu$-absolutely continuous finite measure on $X$ with density function $u$ with respect to $\mu$, that is,
\begin{equation}
	\mu_{u}(A)
	=\int_A u(x)\,d\mu(x)
	\quad(A\in\cA).
\end{equation}
Fix a reference set $Y\in \cA$. Let $Z_n^Y(x)$ denote the last time the orbit $\{T^k x\}_{k\geq0}$ arrives in $Y$ until time $n$, that is,
\begin{align}
	Z_n^Y(x) = \max\{k\leq n: T^k x \in Y\}
	\quad (n\in\bN_0,\;x\in X),
\end{align}
where it is understood that $\max \emptyset=0$.

%We now explain large deviation estimates related to Dynkin--Lamperti generalized arcsine laws for waiting times.

\begin{Thm}\label{Thm:Z}
Suppose the following conditions \ref{Thm:Z-cond-1}, \ref{Thm:Z-cond-2}, \ref{Thm:Z-cond-3}, \ref{Thm:Z-cond-4} are satisfied:
\begin{enumerate}[label=\textup{(A\arabic*)}]
	\item\label{Thm:Z-cond-1} $(X, \cA, \mu)$ is a $\sigma$-finite measure space with $\mu(X)=\infty$, $Y\in \cA$, $\mu(Y)\in(0,\infty)$ and $T$ is a CEMPT on $(X,\cA, \mu)$.

    \item\label{Thm:Z-cond-2} $w_n^Y\sim n^{1-\alpha}\ell(n)$ $(n\to\infty)$, for some $\alpha\in(0,1)$ and some positive, measurable function $\ell:(0,\infty)\to(0,\infty)$ slowly varying at $\infty$. 
	\item\label{Thm:Z-cond-3} There exists $N\in \bN$ such that $\displaystyle \bigg\{\frac{1}{w_n^{Y}}\sum_{k=0}^{n-1}\hat{T}^k 1_{Y_k}\bigg\}_{n\geq N}$ is  uniformly sweeping for $1_Y$.

	\item\label{Thm:Z-cond-4} There exists $H\in L^\infty(\mu)$ such that 
\begin{equation}
	\lim_{n\to\infty}\frac{1}{w_n^Y}
	 \sum_{k=0}^{n-1} \hat{T}^k 1_{Y_k}
   = H
   \quad \text{in $L^\infty(\mu)$}.
\end{equation}

\end{enumerate}
Let $\{c(n)\}_{n\geq0}$ be a positive-valued sequence satisfying 
\begin{equation}\label{eq:c(n)}
	c(n)\to0, \;\; c(n)n\to\infty  \quad(n\to\infty).
\end{equation}
Then
\begin{align}\label{eq:Thm:Z}
  \mu_{H}\bigg(\frac{Z^Y_n}{n} \leq c(n) \bigg)
  \sim
 \frac{\sin(\pi\alpha)}{\pi\alpha} \frac{c(n)^\alpha \ell(n)}{\ell(c(n)n)}
 \quad(n\to\infty).  	
\end{align}
\end{Thm}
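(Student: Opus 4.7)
The strategy is the double Laplace transform method foreshadowed in the introduction. Writing $m_0=[c(n)n]$, I would first derive, via the identity
\[
1_{\{Z_n^Y(x)=m\}}=1_Y(T^m x)\cdot 1_{\{\varphi>n-m\}}(T^m x)\quad(\mu_H\text{-a.e.},\;0\le m\le n)
\]
(valid because $H$ is supported on $Y$, where the first hitting time coincides with $\varphi$) together with the adjoint relation $\int(v\circ T^m)\,u\,d\mu=\int v\,(\hat T^m u)\,d\mu$, the backbone formula
\[
\mu_H(Z_n^Y\le m_0)=\sum_{m=0}^{m_0}\int_Y(\hat T^m H)\cdot 1_{\{\varphi>n-m\}}\,d\mu.
\]
This realises $\mu_H(Z_n^Y\le m_0)$ as a discrete convolution of an ``entrance'' factor $\hat T^m H$ against the first-return tail $\mu(Y\cap\{\varphi>\cdot\})$.

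Next, I would analyse, as $n\to\infty$, the double Laplace transform
\[
\cL_n(q,\lambda):=\int_0^\infty e^{-qu}\,du\int e^{-\lambda Z_{[un]}^Y/(c(n)n)}\,d\mu_H,\qquad q,\lambda>0.
\]
Plugging the backbone formula into the inner integral, swapping the sum with the outer integral, and rescaling $r=m/(c(n)n)$ and $v=u-m/n$, the two time scales $m\sim c(n)n$ and $v\sim 1$ separate, yielding the asymptotic factorisation $\cL_n(q,\lambda)\approx A_n(\lambda)\cdot B_n(q)$ with
\[
A_n(\lambda)=\sum_{m\ge0}e^{-\lambda m/(c(n)n)}\int_Y\hat T^m H\,d\mu,\qquad B_n(q)=\int_0^\infty e^{-qv}\mu(Y\cap\{\varphi>[nv]\})\,dv.
\]

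By (A2) and Karamata's Tauberian theorem, $B_n(q)\sim\Gamma(2-\alpha)\,q^{\alpha-1}n^{-\alpha}\ell(n)$. For $A_n(\lambda)$, conditions (A3) and (A4) together with Hopf-type Ces\`aro estimates for the transfer operator force $\sum_{k=0}^{m-1}\hat T^k H$ to be regularly varying of index $\alpha$ with slowly varying part $1/\ell(m)$, so Karamata/Abel in $\lambda$ yields $A_n(\lambda)\sim C_\alpha\lambda^{-\alpha}(c(n)n)^\alpha/\ell(c(n)n)$ for an explicit constant $C_\alpha$. Multiplying the two factors and using the reflection formula $\Gamma(\alpha)\Gamma(1-\alpha)=\pi/\sin(\pi\alpha)$ produces a limit proportional to $q^{\alpha-1}\lambda^{-\alpha}\sin(\pi\alpha)/(\pi\alpha)$. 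Finally, monotonicity of $u\mapsto\int e^{-\lambda Z_{[un]}^Y/(c(n)n)}d\mu_H$ (non-increasing in $u$) and of $s\mapsto\mu_H(Z_n^Y/(c(n)n)\le s)$ (non-decreasing in $s$) permits a two-step Tauberian inversion --- first in $q$ to evaluate at $u=1$, then in $\lambda$ to invert the Laplace transform of the distribution function at $1$ --- producing the claimed equivalent~\eqref{eq:Thm:Z}.

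The main obstacle will be the sharp asymptotic for $A_n(\lambda)$: the slowly varying correction in the answer involves $\ell(c(n)n)$ rather than $\ell(n)$, which arises from evaluating $\hat T^m H$ on the \emph{intermediate} scale $m\sim c(n)n$, neither ``microscopic'' ($m=O(1)$) nor ``macroscopic'' ($m\sim n$). Handling this requires uniform upper and lower bounds on $\hat T^m H$ across that intermediate range, provided by the uniform sweeping condition (A3), together with the $L^\infty$-convergence in (A4) that identifies $H$ as the asymptotic entrance density. The analytical lemmas of Section~\ref{Sec:Ana} should be tailored precisely for this purpose.
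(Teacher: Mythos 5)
Your overall strategy (double Laplace transform, then two Tauberian inversions in $q$ and $\lambda$) is the paper's strategy, and your final inversion step — monotonicity in $u$ to evaluate at $u=1$ (Lemma~\ref{Lem:Laplace}), then the extended continuity theorem in $\lambda$ — matches the paper exactly. Your ``backbone formula'' is also a correct last-passage identity: $\mu_H(Z_n^Y=m)=\int_Y\hat T^m H\cdot 1_{\{\varphi>n-m\}}\,d\mu$. But the step from there to the factorisation $\cL_n\approx A_n(\lambda)B_n(q)$ is where the argument breaks down, and it is the heart of the matter.

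After the rescaling $u=m/n+v$, the true quantity involves $\int_Y (\hat T^m H)\,1_{\{\varphi>[nv]\}}\,d\mu$, whereas $A_n(\lambda)B_n(q)$ amounts to replacing this by $\bigl(\int_Y\hat T^m H\,d\mu\bigr)\cdot\mu(Y\cap\{\varphi>[nv]\})$. That replacement requires the normalised profile $\hat T^m H\big/\int_Y\hat T^m H\,d\mu$ to be asymptotically constant on $Y$, uniformly over the intermediate scale $m\sim c(n)n$. Neither \ref{Thm:Z-cond-3} nor \ref{Thm:Z-cond-4} says anything of the kind: they concern Ces\`aro averages of $\hat T^k 1_{Y_k}$, not the profile of $\hat T^m H$. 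You do acknowledge the intermediate-scale difficulty, but you propose to handle it via ``uniform upper and lower bounds'' from \ref{Thm:Z-cond-3}. Two-sided bounds would only give the kind of $\liminf/\limsup$ statement in Corollary~\ref{Cor2:Z}, not the exact equivalence $\sim$ claimed by Theorem~\ref{Thm:Z}. In short, the factorisation is asserted, not proven, and the conditions at hand do not obviously supply it.

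The paper avoids this difficulty by decomposing on the \emph{first} return rather than the last visit. Writing $Z_n^Y=Z_{n-k}^Y\circ T^k+k$ on $\{\varphi=k\}$ yields a closed renewal equation for $\sum_n e^{-ns_1}\int_Y\exp(-s_2 Z_n^Y)\,d\mu$, which (via the identity $1_Y-\sum_k e^{-ks}\hat T^k1_{Y\cap\{\varphi=k\}}=(1-e^{-s})\sum_n e^{-ns}\hat T^n1_{Y_n}$) produces the \emph{exact} algebraic identity of Lemma~\ref{Lem:decompose-Z}:
\begin{equation}
\int_Y\Bigl(\sum_{n\ge0}e^{-ns_1}\exp(-s_2 Z_n^Y)\Bigr)\Bigl(\sum_{n\ge0}e^{-n(s_1+s_2)}\hat T^n 1_{Y_n}\Bigr)\,d\mu=\frac{Q^Y(s_1)}{1-e^{-(s_1+s_2)}}.
\end{equation}
Crucially, the Laplace transform of $Z_n^Y$ is paired not with $H$ but with the resolvent-type density $H_t\propto\sum_n e^{-n(s_1+s_2)}\hat T^n 1_{Y_n}$, for which the factorisation $Q^Y(s_1)/[(1-e^{-(s_1+s_2)})Q^Y(s_1+s_2)]$ is exact, not asymptotic. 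Conditions \ref{Thm:Z-cond-3} and \ref{Thm:Z-cond-4} are then used (via Lemmas~\ref{Lem:Ht}, \ref{Lem:IT-weak}, \ref{Lem:IT}, \ref{Lem:bdd-Z}) to pass from $\mu_{H_t}$ to $\mu_H$. So the decoupling you need never has to be proven: it is built into the choice of reference density. If you want to keep your last-passage decomposition, you would need to establish a genuine decoupling lemma for $\int_Y\hat T^m H\,1_{\{\varphi>k\}}\,d\mu$ at intermediate $m$, which is substantially more than the stated hypotheses provide.
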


The proof of Theorem \ref{Thm:Z} will be given in Section \ref{Sec:proof-Z}.

\begin{Rem}
Under the setting of Theorem \ref{Thm:Z}, 
fix $\varepsilon\in(0,\alpha)$ arbitrarily. 
Then the Potter bounds for slowly varying functions \cite[Theorem 1.5.6]{BGT}
implies that there exist $C_\varepsilon\geq 1$ and $N_\varepsilon\in\bN$ such that, for any $n\geq N_\varepsilon$, we have $c(n)\leq 1$ and
\begin{align}
    	C_\varepsilon^{-1} c(n)^\varepsilon
    	\leq \frac{\ell(n)}{\ell(c(n)n)}\leq C_\varepsilon c(n)^{-\varepsilon}.
\end{align}
Thus the right-hand side of \eqref{eq:Thm:Z} converges to $0$ as $n\to\infty$. 
If we further assume 
\begin{align}\label{eq:c(n)-2}
\frac{\ell(n)}{\ell(c(n)n)}\to 1 \quad(n\to\infty),
\end{align}	
then we obtain 
\begin{align}\label{eq:Thm:Z-modif}
  \mu_{H}\bigg(\frac{Z^Y_n}{n} \leq c(n) \bigg)
  \sim
 \frac{\sin(\pi\alpha)}{\pi\alpha} c(n)^\alpha
 \quad(n\to\infty).  	
\end{align}	
\end{Rem}

\begin{Rem}
Fix any positive, measurable function $\ell:(0,\infty)\to(0,\infty)$ slowly varying at $\infty$. Then there exists a non-increasing, positive-valued sequence $\{c(n)\}_{n\geq0}$ satisfying \eqref{eq:c(n)} and \eqref{eq:c(n)-2}.
Indeed, we use the uniform convergence theorem for slowly varying functions \cite[Theorem 1.2.1]{BGT} to take an strictly increasing sequence $\{M_N\}_{N\geq1}\subset \bN$ so that
\begin{equation}
    \sup \bigg\{\bigg|\frac{\ell(t)}{\ell(\lambda t)}-1\bigg|\::\:\lambda\in[N^{-1}, 1], \;t\geq M_N \bigg\}
    \leq
    \frac{1}{N}
    \quad(N\in\bN).	
\end{equation}
Set $c(n)=1$ for $0\leq n < M_1$ and $c(n)=N^{-1/2}$ for $M_N\leq n < M_{N+1}$ $(N\in\bN)$. It is easy to check that $\{c(n)\}_{n\geq0 }$ satisfies \eqref{eq:c(n)} and \eqref{eq:c(n)-2}.	
\end{Rem}

\begin{Rem}[Comparison to the Dynkin--Lamperti generalized arcsine law]
Let us recall the Dynkin--Lamperti generalized arcsine law for waiting times.
Assume the conditions \ref{Thm:Z-cond-1}, \ref{Thm:Z-cond-2}, \ref{Thm:Z-cond-4} of Theorem \ref{Thm:Z} are fulfilled.
Then for any $\mu$-absolutely continuous probability measure $\nu$ on $(X,\cA)$ and any $0\leq t\leq 1$, we have 
 \begin{align}\label{eq:Arcsine-Z}
     \lim_{n\to\infty}\nu\bigg(\frac{Z^Y_n}{n} \leq t \bigg)
     =
     \frac{\sin(\pi\alpha)}{\pi}\int_0^t \frac{ds}{s^{1-\alpha}(1-s)^{\alpha}}, 
\end{align}
which follows from \cite[Theorem 2.3]{Zwe07cpt}. See also \cite[Theorem 3.3]{ThZw} and \cite[Theorem 2.1]{KocZwe}. The limit is the distribution function of the $\rm{Beta}(\alpha,1-\alpha)$-distribution. In the case $\alpha=1/2$, this distribution is the usual arcsine distribution.
We emphasize that the right-hand side of \eqref{eq:Arcsine-Z} does not depend on the choice of $\nu$ because of the ergodicity of $T$.
Note that the right-hand side of \eqref{eq:Thm:Z-modif}  is asymptotically the same as the right-hand side of \eqref{eq:Arcsine-Z} with $t=c(n)$, as $n\to\infty$.
Nevertheless \eqref{eq:Thm:Z-modif} does not follow from \eqref{eq:Arcsine-Z} directly. 
We do not know whether \eqref{eq:Thm:Z} remains valid in the case $\mu_H$ is replaced by other suitable probability measures $\nu$ except for $\mu_{\hat{T}^k H}$ (see also Corollaries \ref{Cor1:Z}, \ref{Cor2:Z}, Theorem \ref{Thm:Z-weak} and Remark \ref{Rem:limsup=infty}).
The difficulty is that the $L^1$-characterization of the ergodicity \cite[Theorem 3.1]{Zwe07cpt} is inadequate for this purpose, although it is significant for \eqref{eq:Arcsine-Z}. 
%We also do not know whether the condition \ref{Thm:Z-cond-3} in Theorem \ref{Thm:Z} can be replaced by a weaker one in \cite[Theorem 2.3]{Zwe07cpt}.  
\end{Rem}

%\begin{Rem}
%If $\ell$ is a positive constant function, then the third convergence in \eqref{eq:c(n)-2}  holds automatically. If $\ell(t)\sim \log t$ $(t\to\infty)$, then $c(n)=1/\log(n+2)$ satisfies \eqref{eq:c(n)} with $C=1$, and $c(n)=(n+1)^{-\delta}$ $(0<\delta<1)$ satisfies \eqref{eq:c(n)} with $C=1/(1-\delta)$.
%In general, for any slowly varying function $\ell$, there exists a non-increasing, positive-valued sequence $(c(n))_{n\geq0}$ satisfying \eqref{eq:c(n)} with $C=1$. Indeed, we use the uniform convergence theorem \cite[Theorem 1.2.1]{BGT} to get positive integers $M_1<M_2<\cdots$ so that, for any $N\in\bN$,
%\begin{equation}
%    \sup \bigg\{\bigg|\frac{\ell(t)}{\ell(\lambda t)}-1\bigg|\::\:\lambda\in[N^{-1}, 1], \;t\geq M_N \bigg\}
%    \leq
%    \frac{1}{N}.	
%\end{equation}
%Set $c(n)=1$ for $0\leq n < M_1$ and $c(n)=N^{-1/2}$ for $M_N\leq n < M_{N+1}$ $(N\in\bN)$. It is easy to check that $(c(n))_{n\geq0 }$ satisfies \eqref{eq:c(n)} with $C=1$.	
%\end{Rem}

In the following two corollaries, we will consider what happens when we replace $\mu_{H}$ in the left-hand side of \eqref{eq:Thm:Z} by other finite measures.

\begin{Cor}\label{Cor1:Z}
Let $k\in \bN_{0}$. Under the setting of Theorem $\ref{Thm:Z}$,
\begin{align}
  \mu_{H}\bigg(\frac{Z^Y_n\circ T^k}{n} \leq c(n) \bigg)
  \bigg(=
  \mu_{\hat{T}^k H}\bigg(\frac{Z^Y_n}{n} \leq c(n) \bigg)
  \bigg)
  \sim
 \frac{\sin(\pi\alpha)}{\pi\alpha}\frac{c(n)^\alpha \ell(n)}{\ell(c(n)n)}
 \quad(n\to\infty).  	
\end{align}
\end{Cor}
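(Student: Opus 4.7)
\textbf{Proof plan for Corollary \ref{Cor1:Z}.} The first equality is a direct application of the defining relation of the transfer operator. Since $H \in L^\infty(\mu)$ is supported on $Y$ and $\mu(Y) < \infty$, we have $H \in L^1(\mu)$; applying $\int (v\circ T) u \, d\mu = \int v (\hat{T}u)\,d\mu$ iteratively $k$ times with the bounded measurable function $v = 1_{\{Z_n^Y/n \leq c(n)\}}$ and $u = H$ yields
\begin{equation}
\mu_{H}\bigg(\frac{Z^Y_n\circ T^k}{n} \leq c(n)\bigg) = \int (1_{\{Z_n^Y/n \leq c(n)\}}\circ T^k) \, H\, d\mu = \int 1_{\{Z_n^Y/n \leq c(n)\}} \hat{T}^k H\, d\mu = \mu_{\hat{T}^k H}\bigg(\frac{Z^Y_n}{n} \leq c(n) \bigg).
\end{equation}

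Next, I would relate $Z_n^Y \circ T^k$ to $Z_{n+k}^Y$ pointwise. By the definition $Z_n^Y(T^k x) = \max\{j : 0 \leq j \leq n,\; T^{j+k}x \in Y\}$, with the convention $\max\emptyset = 0$. Compare this to $Z_{n+k}^Y(x) = \max\{i : 0\leq i \leq n+k,\; T^i x \in Y\}$. A careful case analysis (separating according to whether $T^i x \in Y$ occurs for some $i\in[k,n+k]$) gives the pointwise set equality
\begin{equation}\label{eq:setequal}
\bigg\{\frac{Z_n^Y\circ T^k}{n}\leq c(n)\bigg\} = \bigg\{\frac{Z_{n+k}^Y}{n+k}\leq \tilde c(n+k)\bigg\}, \qquad \tilde c(n+k) := \frac{c(n)n + k}{n+k},
\end{equation}
the key point being that if no visit to $Y$ occurs in $[k,n+k]$ then the left event holds trivially (since $Z_n^Y\circ T^k = 0$) and the right event holds too (since then $Z_{n+k}^Y \leq k-1 \leq c(n)n+k$), while otherwise $Z_n^Y\circ T^k = Z_{n+k}^Y - k$.

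Then I would view $\{\tilde c(m)\}_{m\geq k}$ as a new positive sequence (extended arbitrarily for $m < k$) and verify $\tilde c(m) \to 0$ and $\tilde c(m)m = c(m-k)(m-k) + k \to \infty$; both are immediate from $c(n)\to 0$ and $c(n)n\to\infty$. Applying Theorem \ref{Thm:Z} to this sequence and using \eqref{eq:setequal} yields
\begin{equation}
\mu_H\bigg(\frac{Z_n^Y\circ T^k}{n}\leq c(n)\bigg) \sim \frac{\sin(\pi\alpha)}{\pi\alpha}\frac{\tilde c(n+k)^\alpha\, \ell(n+k)}{\ell(\tilde c(n+k)(n+k))}.
\end{equation}

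Finally, I would replace $\tilde c(n+k)$, $n+k$, $\tilde c(n+k)(n+k)$ with $c(n)$, $n$, $c(n)n$ in the asymptotic. One checks $\tilde c(n+k)/c(n) = 1 + k(1-c(n))/(c(n)(n+k)) \to 1$ (using $c(n)n\to\infty$), so $\tilde c(n+k)^\alpha \sim c(n)^\alpha$; slow variation of $\ell$ combined with $(n+k)/n \to 1$ gives $\ell(n+k)\sim \ell(n)$; and since $\tilde c(n+k)(n+k) = c(n)n+k \sim c(n)n \to\infty$, slow variation gives $\ell(\tilde c(n+k)(n+k))\sim \ell(c(n)n)$. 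The main (mild) obstacle is the pointwise bookkeeping in step two, particularly the convention $\max\emptyset = 0$ and the offset $k$; the rest is a mechanical comparison via slow variation.
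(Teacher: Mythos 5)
Your proof is correct and follows essentially the same route as the paper: both rewrite $\{Z_n^Y\circ T^k \leq nc(n)\}$ as $\{Z_{n+k}^Y \leq nc(n)+k\}$, apply Theorem~\ref{Thm:Z} with the shifted sequence $\tilde c(n+k)=(c(n)n+k)/(n+k)$, and conclude via the uniform convergence theorem for slowly varying functions. Your additional spelling-out of the transfer-operator identity and the $\max\emptyset=0$ bookkeeping are fine details the paper leaves implicit.
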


\begin{proof}[Proof of Corollary $\ref{Cor1:Z}$ by using Theorem $\ref{Thm:Z}$]
Note that $Z^Y_{n}\circ T^k= \max\{0,\;Z^Y_{n+k}-k\}$, and hence $\{Z^Y_n\circ T^k \leq nc(n)\}=\{Z^Y_{n+k}\leq nc(n)+k\}$.
Therefore Theorem \ref{Thm:Z} yields
\begin{align}
	\mu_{H}\bigg(\frac{Z^Y_n\circ T^k}{n} \leq c(n) \bigg)
	&=
	\mu_{H}\bigg(\frac{Z^Y_{n+k}}{n}\leq c(n)+\frac{k}{n}\bigg)
	\\
	&=
	\mu_H\bigg(\frac{Z^Y_{n+k}}{n+k}\leq \frac{nc(n)+k}{n+k}\bigg)
	\\
	&\sim
	\frac{\sin(\pi\alpha)}{\pi\alpha}
	\bigg(\frac{c(n)n+k}{n+k}\bigg)^\alpha
	\frac{\ell(n+k)}{\ell(c(n)n+k)}
	\\
	&\sim
	\frac{\sin(\pi\alpha)}{\pi\alpha}\frac{c(n)^\alpha \ell(n)}{\ell(c(n)n)}
	\quad
	(n\to\infty).
\end{align}
Here we used the uniform convergence theorem for slowly varying functions. 
This completes the proof.
\end{proof}

\begin{Cor}\label{Cor2:Z}
Suppose that the conditions \ref{Thm:Z-cond-1}, \ref{Thm:Z-cond-2}, \ref{Thm:Z-cond-3}, \ref{Thm:Z-cond-4} in Theorem $\ref{Thm:Z}$ are fulfilled. Let $G\in  \{u\in L^1(\mu)\::\:u\geq0\}$. Then the following assertions hold:
\begin{enumerate}[label=$(\arabic*)$]

\item\label{Cor2:Z-1} Assume that  $G$ is  uniformly sweeping for $1_Y$. Then there exists  $C_1\in(0,\infty)$ such that, for any positive-valued sequence $\{c(n)\}_{n\geq0}$ satisfying \eqref{eq:c(n)}, we have
\begin{equation}\label{eq:Cor2:Z-1}
	C_1 \leq 
	\liminf_{n\to\infty}
	\frac{\ell(c(n)n)}{c(n)^\alpha \ell(n)}
	\mu_G\bigg(\frac{Z^Y_n}{n} \leq c(n) \bigg).
\end{equation}

\item\label{Cor2:Z-2}  Assume that $1_Y$ is uniformly sweeping for $G$.  Then there exists $C_2\in(0,\infty)$ such that, for any positive-valued sequence $\{c(n)\}_{n\geq0}$ satisfying \eqref{eq:c(n)}, we have
\begin{equation}\label{eq:Cor2:Z-2}
	\limsup_{n\to\infty}
	\frac{\ell(c(n)n)}{c(n)^\alpha \ell(n)}
	\mu_G\bigg(\frac{Z^Y_n}{n} \leq c(n) \bigg)
  \leq C_2. 
\end{equation}

\end{enumerate}
\end{Cor}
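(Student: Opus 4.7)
The plan is to derive both parts from Theorem \ref{Thm:Z} by using the uniform sweeping hypotheses to sandwich $\mu_G(Z^Y_n/n\leq c(n))$ between values of $\mu_H$ evaluated at $c(n)\pm O(1/n)$, and then to absorb the $O(1/n)$ perturbation using slow variation of $\ell$. The essential tools are the transfer-operator duality $\int f\,\hat{T}^k G\,d\mu=\mu_G(f\circ T^k)$, the shift identity $Z^Y_n\circ T^k=\max\{0,Z^Y_{n+k}-k\}$ already used in the proof of Corollary \ref{Cor1:Z}, and the monotonicity $Z^Y_m\geq Z^Y_n$ for $m\geq n$.

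For part \ref{Cor2:Z-1}, since $H\in L^\infty(\mu)$ is supported on $Y$, we have $H\leq\|H\|_\infty 1_Y$ a.e., so uniform sweeping of $G$ for $1_Y$ promotes to uniform sweeping of $G$ for $H$: $C\sum_{k=0}^K \hat{T}^k G \geq H/\|H\|_\infty$ a.e. Setting $\tilde{c}(n)=c(n)-K/n$, which still satisfies \eqref{eq:c(n)} for large $n$, and integrating this pointwise inequality against $1_{\{Z^Y_n\leq\tilde{c}(n)n\}}$, I would obtain
\begin{align}
\mu_H\bigl(Z^Y_n/n\leq \tilde{c}(n)\bigr)
&\leq C\|H\|_\infty \sum_{k=0}^K \mu_G\bigl(Z^Y_{n+k}\leq \tilde{c}(n)n+k\bigr) \\
&\leq C\|H\|_\infty(K+1)\,\mu_G\bigl(Z^Y_n/n\leq c(n)\bigr),
\end{align}
where the last step uses $\tilde{c}(n)n+k\leq c(n)n$ and $Z^Y_{n+k}\geq Z^Y_n$. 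Applying Theorem \ref{Thm:Z} to $\tilde{c}(n)$ and noting that $\tilde{c}(n)\sim c(n)$ and $\ell(\tilde{c}(n)n)\sim \ell(c(n)n)$ by slow variation then yields the lower bound \eqref{eq:Cor2:Z-1}.

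For part \ref{Cor2:Z-2}, the hypothesis gives $C'\sum_{k=0}^{K'}\hat{T}^k 1_Y\geq G$ a.e., while the equivalence recorded just after the definition of the asymptotic entrance density, combined with \ref{Thm:Z-cond-3}, guarantees that $H$ is uniformly sweeping for $1_Y$, i.e.\ $C''\sum_{j=0}^{K''}\hat{T}^j H\geq 1_Y$ a.e. Chaining these and testing against $1_{\{Z^Y_n\leq c(n)n\}}$ in the same dual manner yields
\begin{align}
\mu_G\bigl(Z^Y_n/n\leq c(n)\bigr)
&\leq C'C''\sum_{k=0}^{K'}\sum_{j=0}^{K''}\mu_H\bigl(Z^Y_{n+k+j}\leq c(n)n+k+j\bigr) \\
&\leq C'''\,\mu_H\bigl(Z^Y_n/n\leq c^{*}(n)\bigr),
\end{align}
with $c^{*}(n)=c(n)+(K'+K'')/n$; monotonicity of $Z_m^Y$ in $m$ is again used in the second step. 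Theorem \ref{Thm:Z} applied to $c^{*}(n)$, together with $c^{*}(n)\sim c(n)$ and $\ell(c^{*}(n)n)\sim\ell(c(n)n)$, then delivers \eqref{eq:Cor2:Z-2}.

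The only nontrivial step I expect is the careful bookkeeping of the bounded index shifts introduced by the transfer-operator dualization, and checking that the ensuing $O(1/n)$ perturbation of $c(n)$ leaves the rate $c(n)^\alpha \ell(n)/\ell(c(n)n)$ invariant; this is automatic from $c(n)n\to\infty$ together with the uniform convergence theorem \cite[Theorem 1.2.1]{BGT} for slowly varying functions.
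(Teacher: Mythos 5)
Your proof is correct and takes essentially the same route as the paper: dualize the uniform-sweeping inequalities with the transfer operator against the indicator $1_{\{Z^Y_n\leq c(n)n\}}$, absorb the resulting bounded index shifts into an $O(1/n)$ perturbation of $c(n)$, and invoke Theorem \ref{Thm:Z} together with slow variation to recover the rate. The only cosmetic difference is that in part \ref{Cor2:Z-2} the paper invokes Corollary \ref{Cor1:Z} directly for the terms $\mu_H(Z^Y_n\circ T^k/n\leq c(n))$, whereas you re-derive the same estimate inline by monotonicity of $n\mapsto Z^Y_n$ and a perturbed sequence $c^{*}(n)$.
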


\begin{proof}[Proof of Corollary $\ref{Cor2:Z}$ by using Theorem $\ref{Thm:Z}$]

\ref{Cor2:Z-1} 
By the assumption, $G$ is also uniformly sweeping for $H$.
Take $K\in\bN$ so large that $\sum_{k=0}^{K-1} \hat{T}^k G\geq K^{-1}H$, a.e.
Let $k\in\{0,1,\dots,K\}$. 
Note that $Z^Y_{n}\circ T^k +K\geq Z^Y_{n}$ and hence
\begin{equation}
    \mu_G
    \bigg(\frac{Z^Y_{n}}{n}\leq c(n)\bigg)
    \geq
     \mu_G
    \bigg(\frac{Z^Y_{n}\circ T^k}{n}\leq c(n)-\frac{K}{n}\bigg).
\end{equation}
Therefore Theorem \ref{Thm:Z} yields that
\begin{equation}
\begin{split}
    \mu_G
    \bigg(\frac{Z^Y_{n}}{n}\leq c(n)\bigg)
    &\geq
     \frac{1}{K}\sum_{k=0}^{K-1}
    \mu_G
    \bigg(\frac{Z^Y_{n}\circ T^k}{n}\leq c(n)-\frac{K}{n}\bigg)
    \\
    &\geq
    \frac{1}{K^2}
    \mu_{H}
    \bigg(\frac{Z^Y_{n}}{n}\leq c(n)-\frac{K}{n}\bigg)
    \\
    &\sim
    \frac{\sin(\pi\alpha)}{K^2\pi\alpha}\frac{c(n)^\alpha \ell(n)}{\ell(c(n)n)}
 \quad (n\to\infty),
\end{split}    
\end{equation}
which implies the desired result. 

\ref{Cor2:Z-2}
By the assumption, $H$ is also uniformly sweeping for $G$.
 Take $K\in\bN$ so large that $G\leq K\sum_{k=0}^{K-1} \hat{T}^k H$, a.e. Then we use Corollary \ref{Cor1:Z} to obtain
\begin{equation}
\begin{split}
	\mu_G
    \bigg(\frac{Z^Y_{n}}{n}\leq c(n)\bigg)
    &\leq
    K
    \sum_{k=0}^{K-1}
    \mu_{H}
    \bigg(\frac{Z^Y_n\circ T^k}{n}\leq c(n)\bigg)
    \\
    &\sim
    K^2
    \frac{\sin(\pi\alpha)}{\pi\alpha}\frac{c(n)^\alpha \ell(n)}{\ell(c(n)n)}
 \quad(n\to\infty).
\end{split}	
\end{equation}
We now complete the proof.
\end{proof}

We will also give the proof of the following theorem in Section \ref{Sec:proof-Z}.

\begin{Thm}\label{Thm:Z-weak}
Suppose that the conditions \ref{Thm:Z-cond-1}, \ref{Thm:Z-cond-2}, \ref{Thm:Z-cond-3} of Theorem $\ref{Thm:Z}$ are fulfilled. 
Let $G\in  \{u\in L^1(\mu)\::\:u\geq0\}$.  Then the assertion \ref{Cor2:Z-2} of Corollary $\ref{Cor2:Z}$ holds.
\end{Thm}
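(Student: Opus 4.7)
The strategy is to mimic the double Laplace transform approach used for Theorem \ref{Thm:Z}, but to replace the role of the asymptotic entrance density from (A4) by the one-sided pointwise control afforded by the uniform sweeping condition (A3).

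First I would reduce to the reference measure $\mu_{1_Y}$. Uniform sweeping of $1_Y$ for $G$ yields constants $C_0 > 0$ and $K_0 \in \bN_0$ with $G \leq C_0 \sum_{j=0}^{K_0} \hat{T}^j 1_Y$ $\mu$-a.e. Combined with the duality $\int (v \circ T^j) u \, d\mu = \int v \, \hat{T}^j u \, d\mu$ and the identity $Z_n^Y \circ T^j = \max\{0, Z_{n+j}^Y - j\}$ from the proof of Corollary \ref{Cor1:Z}, this gives
\begin{equation*}
\mu_G\bigl(Z_n^Y / n \leq c(n)\bigr) \leq C_0 \sum_{j=0}^{K_0} \mu_{1_Y}\bigl(Z_{n+j}^Y \leq c(n) n + j\bigr).
\end{equation*}
Since $\ell$ is slowly varying and $c(n) n \to \infty$, finite shifts of $n$ and $c(n) n$ by $j \leq K_0$ are absorbed into the prefactor $\ell(c(n)n) / (c(n)^\alpha \ell(n))$. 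Hence the theorem reduces to
\begin{equation*}
    (\ast) \qquad \limsup_{n \to \infty} \frac{\ell(c(n) n)}{c(n)^\alpha \ell(n)} \mu_{1_Y}\bigl(Z_n^Y \leq c(n) n\bigr) < \infty.
\end{equation*}

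For $(\ast)$, I would start from the identity
\begin{equation*}
    \mu_{1_Y}(Z_n^Y \leq m) = \int 1_{A_{n-m}} \hat{T}^{m+1} 1_Y \, d\mu \qquad (0 \leq m < n),
\end{equation*}
with $A_j = Y^c \cap \{\varphi \geq j\}$, which follows by duality from the characterization $\{x : Z_n^Y(x) \leq m\} = \{x : T^{m+1} x, \dots, T^n x \notin Y\}$. The uniform sweeping (A3) provides the pointwise a.e.\ bound
\begin{equation*}
    1_Y \leq \frac{C_1}{w_N^Y} \sum_{j=0}^{K_1} \hat{T}^j R_N, \qquad R_N := \sum_{k=0}^{N-1} \hat{T}^k 1_{Y_k}, \quad N \geq N_0.
\end{equation*}
Applying $\hat{T}^{m+1}$ to this inequality and integrating against $1_{A_{n-m}}$ yields
\begin{equation*}
    \mu_{1_Y}\bigl(Z_n^Y \leq m\bigr) \leq \frac{C_1}{w_N^Y} \sum_{j=0}^{K_1} \sum_{k=0}^{N-1} \mu\bigl(Y_k \cap \{Z_{n+j+k}^Y \leq m+j+k\}\bigr),
\end{equation*}
the inequality that plays the role of the appeal to Corollary \ref{Cor1:Z} made in the proof of Corollary \ref{Cor2:Z}. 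I would then take the double Laplace transform of the right-hand side in the variables $n$ and $m$; the renewal decomposition \eqref{eq:1Yn} combined with Fubini expresses the result in terms of $Q^Y$, and the regular variation \eqref{eq:RV:Q} of $Q^Y$ determines the asymptotic scaling as the Laplace parameters tend to $0$. Choosing those parameters of orders $1/n$ and $1/(c(n)n)$, and the cutoff $N$ of order comparable to $c(n) n$, produces exactly the prefactor $c(n)^\alpha \ell(n) / \ell(c(n) n)$, after which a Tauberian-type lemma from Section \ref{Sec:Ana} yields $(\ast)$.

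The hard part will be this last Laplace-to-pointwise conversion, since without the precise limit given by (A4) only one-sided Laplace information is available and a standard two-sided Tauberian argument does not apply. The resolution is to exploit the monotonicity of the map $m \mapsto \mu_{1_Y}(Z_n^Y \leq m)$: a Laplace upper bound can be converted into a pointwise upper bound via the standard Abelian--Tauberian comparison, at the cost of only a multiplicative constant absorbed into $C_2$. This is precisely the feature that allows the assertion \ref{Cor2:Z-2} of Corollary \ref{Cor2:Z} to persist without the entrance-density condition (A4).
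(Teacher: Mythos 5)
Your overall plan (uniform sweeping plus double Laplace transform plus regular variation, with a monotonicity/Chebyshev argument to pass between pointwise and transform estimates) is the same in spirit as the paper's, and the pieces you do work out are correct: the reduction to $\mu_{1_Y}$, the identity $\mu_{1_Y}(Z_n^Y\leq m)=\int 1_{A_{n-m}}\hat T^{m+1}1_Y\,d\mu$ with $A_p=Y^c\cap\{\varphi\geq p\}$, and the bound obtained by applying \ref{Thm:Z-cond-3} under $\hat T^{m+1}$. But the route diverges from the paper's at exactly the point where the paper's machinery does the real work, and the step you leave as "take the double Laplace transform ... the renewal decomposition combined with Fubini expresses the result in terms of $Q^Y$ ... choosing ... the cutoff $N$ of order comparable to $c(n)n$" is a genuine gap, not a routine computation. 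The paper does not reduce to $\mu_{1_Y}$ at all: it passes directly from $\mu_G$ to the double Laplace transform $\sum_n e^{-nt^{-1}}\int\exp(-Z_n^Y/(c(t)t))\,G\,d\mu$ via Chebyshev and the monotonicity used in Lemma \ref{Lem:discrete}, then compares it to the same transform taken against the $t$-dependent Abel-mean density $H_t$ of \eqref{def:Ht-ZY} using Lemma \ref{Lem:IT-weak}, shows $\{H_t\}_{t\geq t_0}$ is uniformly sweeping via Lemma \ref{Lem:Ht}, and finally reads off the exact asymptotics from the closed form of Lemma \ref{Lem:decompose-Z}. The choice of the exponential weight $e^{-n(t^{-1}+\lambda(t))}$ in $H_t$, rather than a sharp cutoff, is essential: it is exactly what makes the telescoping identity \eqref{eq2:Lem:decompose-Z} produce the clean formula $Q^Y(s_1)/(1-e^{-(s_1+s_2)})$.

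The specific problem with your hard truncation $R_N=\sum_{k<N}\hat T^k1_{Y_k}$ is the interplay between $N$ and the Laplace transform in $n$. The inequality $\mu_{1_Y}(Z_n^Y\leq m)\leq (C_1/w_N^Y)\sum_{j,k}\mu(Y_k\cap\{Z_{n+j+k}^Y\leq m+j+k\})$ holds for each fixed $N\geq N_0$, but when you sum it over $n$ and $m$ you must keep $N$ fixed, whereas you want $N\sim c(n)n$ to get the factor $(w_N^Y)^{-1}\sim(c(n)n)^{\alpha-1}/\ell(c(n)n)$. Taking $N$ as a function of $n$ inside the Laplace sum is not legitimate, and for a fixed finite $N$ the truncated Cesàro sum $R_N$ does not telescope against the renewal identity \eqref{eq:1Yn}, so no closed form in $Q^Y$ results — you would be left estimating the Laplace transform of $\sum_k\mu(Y_k\cap\{Z_{n+j+k}^Y\leq m+j+k\})$ for each $k<N$, which is not obviously tractable. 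To repair this you would need to replace the sharp cutoff $k<N$ by the Abel weight $e^{-k(s_1+s_2)}$, at which point you would have reconstructed $H_t$ and Lemma \ref{Lem:decompose-Z}, and the detour through the set $A_{n-m}$ becomes unnecessary. So the proposal is in the right ballpark but the central analytic step is missing and, as stated, would not go through.
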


In other words, the assertion \ref{Cor2:Z-2} of Corollary \ref{Cor2:Z} remains valid without assuming the existence of the asymptotic entrance density $H$. 
The reader may expect the assertion \ref{Cor2:Z-1} also remains valid under a similar setting, but we do not know whether it is true. The reason is that $\mu_G(Z_n^Y/n\leq c(n))$ can be bounded above but not below by double Laplace transform of $Z_n^Y$ as we shall see in the proof of Theorem \ref{Thm:Z-weak}.

\begin{Rem}
\label{Rem:limsup=infty}
%Let us explain an example of $G\in \{u\in L^1(\mu)\::\:u\geq0\}$ for which the upper estimate \eqref{eq:Cor2:Z-2} does not hold.
Suppose that the condition \ref{Thm:Z-cond-1} of Theorem $\ref{Thm:Z}$ is satisfied. Let $\alpha\in(0,1)$ and
let $\{c(n)\}_{n\geq0}$ be a non-increasing, $(0,1]$-valued sequence satisfying $c(0)=1$ and $c(n)\to0$ $(n\to\infty)$.
Then there exists a $\mu$-probability density function $G$ such that 
\begin{align}\label{limsup-Z}
    \limsup_{n\to\infty}\frac{1}{c(n)^\alpha}\mu_{G}\bigg(\frac{Z_n^Y}{n}\leq c(n)\bigg)=\infty,	
\end{align}
as we shall see below.
Indeed, let
\begin{align}
   N_0=0\quad \text{and}\quad 
   N_{k}=\min\{n> N_{k-1}\;:\; \mu(Y\cap\{\varphi=n\})>0\}
   \quad(k\in \bN).	
\end{align}
Then $\{N_k\}_{k\geq0}\subset\bN_0$ is strictly increasing.
We define the $\mu$-probability density function $G:X\to[0,\infty)$ as 
\begin{align}
   G=
   \begin{cases}
   	(c(N_{k-1})^{\alpha/2}-c(N_{k})^{\alpha/2})/\mu(Y\cap\{\varphi=N_k\}),
   	&\text{on $Y\cap \{\varphi=N_k\}$ $(k\in\bN)$,}
   	\\
   	0,
   	&\text{otherwise.}
   \end{cases}
\end{align}
Then $ \mu_G(\varphi> N_k)=c(N_k)^{\alpha/2}$ and hence
\begin{align}
    \frac{1}{c(N_k)^\alpha}
    \mu_G\bigg(\frac{Z^Y_{N_k}}{N_k}
    \leq c(N_k)
    \bigg)	
    \geq
    \frac{1}{c(N_k)^\alpha}
    \mu_G(\varphi> N_k)
    =
    c(N_k)^{-\alpha/2}
    \to\infty\quad(k\to\infty),
\end{align}
which implies \eqref{limsup-Z}.
    	
\end{Rem}

%The upper estimate \eqref{eq:Cor2:Z-1} still remains valid when we replace the condition \ref{Thm:Z-cond-3} by a weaker one, as we shall see below. Nevertheless we do not know whether  the lower estimate \eqref{eq:Cor2:Z-2} remains valid or not under weaker assumptions. 
%
%\begin{Thm}\label{Thm:Z-weak}
%Suppose that the conditions \ref{Thm:Z-cond-1}, \ref{Thm:Z-cond-2} in Theorem $\ref{Thm:Z}$ are satisfied. 
%In addition we assume
%\begin{equation}
%	\mathfrak{H}
%	=\bigg\{
%	\frac{1}{w^{Y}_n}\sum_{k=0}^{n-1}\hat{T}1_{Y_k}\::\:n\geq1
%	\bigg\}
%\end{equation}
%is  uniformly sweeping for some measurable set $Y'\supset Y$.
%Let $G\in L^1(\mu)\cap L^\infty(\mu)$ be a non-negative function supported on $Y'$. Then there exists $C_1\in(0,\infty)$ such that, for any positive-valued sequence $(c(n))_{n=0}^\infty$ satisfying \eqref{eq:c(n)}, we have \eqref{eq:Cor2:Z-1}.	
%\end{Thm}
%
%We will prove Theorem \ref{Thm:Z-weak} in Section ...
 
\subsection{Large deviation estimates related to the Darling--Kac theorem}

For $A\in \cA$, let $S_n^{A}(x)$ denote the occupation time on $A$ of the orbit $\{T^k x\}_{k\geq0}$ from time $1$ to time $n$, i.e.,
\begin{align}
S_n^{A}(x)=\sum_{k=1}^n 1_{A}(T^k x)\quad (n\in\bN_0,\;x\in X).
\end{align}
In the following we consider occupation times on a set of finite measure.

\begin{Thm}\label{Thm:SY}
Suppose the following conditions \ref{Thm:SY-cond-1}, \ref{Thm:SY-cond-2}, \ref{Thm:SY-cond-3} are satisfied:
\begin{enumerate}[label=\textup{(B\arabic*)}]
	\item\label{Thm:SY-cond-1} $(X, \cA, \mu)$ is a $\sigma$-finite measure space with $\mu(X)=\infty$, $Y\in \cA$, $\mu(Y)\in(0,\infty)$, and $T$ is a CEMPT on $(X,\cA, \mu)$.

    \item\label{Thm:SY-cond-2} $w_n^Y\sim n^{1-\alpha}\ell(n)$ $(n\to\infty)$ for some $\alpha\in(0,1)$ and some positive, measurable function $\ell:(0,\infty)\to(0,\infty)$ slowly varying at $\infty$. 
	\item\label{Thm:SY-cond-3} $\displaystyle 
	\bigg\{\frac{1}{w^{Y}_n}\sum_{k=0}^{n-1}\hat{T}^k 1_{Y_k}\bigg\}_{n\geq1}$ is $L^\infty(\mu)$-bounded.
\end{enumerate}
For $t>0$, set
\begin{align}
    a(t)
    = 
     \frac{t}{\Gamma(1+\alpha)Q^Y(t^{-1})}
     \sim 
    \frac{t^{\alpha}}{\Gamma(1+\alpha)\Gamma(2-\alpha)\ell(t)}
    \quad (t\to\infty).	
\end{align}
Let $\{\tilde{c}(n)\}_{n\geq0}$ be a positive-valued sequence satisfying
\begin{equation}\label{eq:tilde-c(n)}
	\tilde{c}(n)\to0\quad\text{and}\quad
	\tilde{c}(n)a(n)\to\infty
	\quad(n\to\infty).
\end{equation}
Then
\begin{align}\label{eq:Thm:SY}
  \mu_{1_Y}\bigg(\frac{S^{Y}_n}{a(n)} \leq \tilde{c}(n) \bigg)
  %\bigg(=\mu\bigg[Y\cap\bigg\{\frac{S^{Y}_n}{a(n)} \leq \tilde{c}(n) \bigg\}\bigg]\bigg)
  \sim
  \frac{\sin(\pi\alpha)}{\pi\alpha}\tilde{c}(n)
 \quad(n\to\infty).  	
\end{align}
\end{Thm}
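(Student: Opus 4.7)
My plan is to analyze the distribution of $S_n^Y$ by computing a double Laplace transform, as suggested in the introduction and carried out in \cite{SerYan19} for related arcsine-type theorems. First I would rephrase the event via the inverse relation between the occupation time and the successive hitting times: writing $T_Y:=T^{\varphi}$ for the induced transformation (which preserves $\mu|_Y$) and $\tau_K:=\sum_{j=0}^{K-1}\varphi\circ T_Y^{\,j}$ for the $K$-th return time to $Y$, we have $\{S_n^Y\le K\}=\{\tau_{K+1}>n\}$ pointwise on $Y$. Thus our target quantity can be reformulated as the upper-tail estimate
\begin{align}
    \mu_{1_Y}\bigl(S_n^Y/a(n)\le \tilde c(n)\bigr)=\mu_{1_Y}(\tau_{K_n+1}>n),\qquad K_n:=\lfloor \tilde c(n)\,a(n)\rfloor,
\end{align}
and, by \eqref{eq:tilde-c(n)}, we are in the regime $K_n\to\infty$, $K_n/a(n)=\tilde c(n)\to 0$.

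The heuristic guiding the analysis is the single-big-jump principle: the renewal sum $\tau_{K}$ exceeds $n$ mainly because one of its $K$ summands does. In the i.i.d.\ picture this gives $\mu_{1_Y}(\tau_K>n)\sim K\,\mu(Y\cap\{\varphi>n\})$, and applying the monotone density theorem to \ref{Thm:SY-cond-2} yields $\mu(Y\cap\{\varphi>n\})\sim(1-\alpha)n^{-\alpha}\ell(n)$. Inserting $K=K_n$ and simplifying via $(1-\alpha)/(\Gamma(1+\alpha)\Gamma(2-\alpha))=1/(\Gamma(1+\alpha)\Gamma(1-\alpha))=\sin(\pi\alpha)/(\pi\alpha)$ delivers exactly the constant of \eqref{eq:Thm:SY}, which makes both the scaling and the coefficient plausible.

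To make this rigorous despite the fact that $\{\varphi\circ T_Y^j\}_{j\ge 0}$ is only stationary, not independent, I would compute the double Laplace transform
\begin{align}
    \Phi_n(q,\lambda):=\int_0^\infty e^{-qu}\int_Y\exp\!\Bigl(-\frac{\lambda\,S_{[un]}^Y}{\tilde c(n)\,a(n)}\Bigr)\,d\mu\,du\qquad(q,\lambda>0),
\end{align}
and reduce it, via $\int v\,\hat T^m u\,d\mu=\int(v\circ T^m)u\,d\mu$ together with the identity \eqref{eq:1Yn}, to an expression controlled by $Q^Y(q/n)$ and by the Ces\`aro averages $(w_n^Y)^{-1}\sum_{k=0}^{n-1}\hat T^k 1_{Y_k}$ appearing in \ref{Thm:SY-cond-3}. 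Substituting the Karamata asymptotic \eqref{eq:RV:Q} for $Q^Y$ and invoking the analytic lemmas of Section~\ref{Sec:Ana}, I expect $\tilde c(n)^{-1}\Phi_n(q,\lambda)$ to converge as $n\to\infty$ to an explicit limit in $(q,\lambda)$. A Karamata--Tauberian step in the $u$-variable (legitimate because $m\mapsto\mu_{1_Y}(S_m^Y\le\tilde c(n)a(n))$ is non-increasing in $m$), combined with standard Laplace-to-distribution-function bounds in the $\lambda$-variable (cf.\ \cite{KasYan}), then transfers the Laplace asymptotic to the pointwise statement \eqref{eq:Thm:SY}.

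The main obstacle will be matching the upper and lower bounds under the weaker hypothesis~\ref{Thm:SY-cond-3}: in contrast with Theorem~\ref{Thm:Z}, we do not assume an asymptotic entrance density, only uniform $L^\infty$-boundedness of the Ces\`aro averages, so no limit object is directly available. The Tauberian inversions must therefore be performed by uniform equicontinuity, and the error terms must be controlled with enough precision to extract the constant $\sin(\pi\alpha)/(\pi\alpha)$ cleanly from the Laplace-side calculation.
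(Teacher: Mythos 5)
Your high-level plan -- compute the double Laplace transform of $S_n^Y$, reduce it via the first-return decomposition and \eqref{eq:1Yn}, then invert in $q$ by monotonicity (Lemma~\ref{Lem:Laplace}) and in $\lambda$ by the extended continuity theorem -- is in fact exactly the paper's route (Lemmas~\ref{Lem:decompose-SY}, \ref{Lem2:decompose-SY} and Section~\ref{Sec:proof-SY}), and your single-big-jump heuristic via $\tau_K$ correctly predicts the constant $\sin(\pi\alpha)/(\pi\alpha)$.

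However, you have misdiagnosed the role of hypothesis~\ref{Thm:SY-cond-3} and proposed a workaround (``uniform equicontinuity'') that is neither needed nor obviously feasible. The first-return decomposition of the double Laplace transform (Lemma~\ref{Lem:decompose-SY}) produces \emph{two} terms equal to $Q^Y(s_1)$: the first, $(1-e^{-s_2})\int_Y\bigl(\sum_n e^{-ns_1}e^{-s_2 S_n^Y}\bigr)\,d\mu$, carries all of the asymptotics; the second involves the Ces\`{a}ro-type object $\sum_n e^{-ns_1}\hat{T}^n 1_{Y_n}$ but is multiplied by the extra small factor $(1-e^{-s_1})$ with $s_1=qt^{-1}\to 0$. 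Hypothesis~\ref{Thm:SY-cond-3} yields the $L^\infty$-bound $\bigl\|\sum_n e^{-ns}\hat{T}^n 1_{Y_n}\bigr\|_{L^\infty(\mu)}\le C\,Q^Y(s)$ (the Abel-summation computation recorded as \eqref{eq:bdd-Q}); combined with the $(1-e^{-s_1})$ prefactor, this makes the second term \emph{negligible} compared with the first whenever $Q^Y(qt^{-1})/(\lambda(t)t)\to 0$, which holds automatically for $\lambda(t)=\lambda/(\tilde{c}(t)a(t))$ under \eqref{eq:tilde-c(n)}. So the Ces\`{a}ro term simply drops out of the leading order; no limit object for the Ces\`{a}ro averages (and no equicontinuity in its absence) ever has to be extracted. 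This structural fact is exactly why Theorem~\ref{Thm:SY} holds under the weaker condition~\ref{Thm:SY-cond-3}, whereas Theorem~\ref{Thm:Z} needs~\ref{Thm:Z-cond-4}: in the latter's decomposition the analogous Ces\`{a}ro term is the \emph{main} factor, so one must know its $L^\infty$-limit. Without this observation your plan would stall at precisely the point you flag as the ``main obstacle''.
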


The proof of Theorem \ref{Thm:SY} will be given in Section \ref{Sec:proof-SY}.

\begin{Rem}[Comparison to the Darling--Kac theorem]
Let us recall the Darling--Kac theorem.
Set
\begin{align} 
 F(t)=\frac{1}{\pi\alpha}\int_0^t
  \sum_{k=1}^\infty 
  \frac{(-1)^{k-1}}{k!} \sin(\pi\alpha k)\Gamma(1+\alpha k)s^{k-1}\,ds
  \quad
  (t\geq0), 
\end{align}
which is the distribution function of the Mittag-Leffler distribution of order $\alpha$ with Laplace transform
\begin{align}
	\int_0^\infty 
	e^{-\lambda t}\,dF(t)
  =
  \sum_{k=0}^\infty\frac{(-\lambda)^k}{\Gamma(1+\alpha k)}\quad (\lambda\in \bR).
\end{align}
See \cite{Pol48, DK} for the details. 
As a special case, the Mittag-Leffler distribution of order $1/2$ is the half-normal distribution with mean $2/\sqrt{\pi}$.
 Suppose the conditions \ref{Thm:Z-cond-1}, \ref{Thm:Z-cond-2}, \ref{Thm:Z-cond-3}, \ref{Thm:Z-cond-4} of Theorem \ref{Thm:Z} are satisfied. Then for any $\mu$-absolutely continuous probability measure $\nu$ on $X$ and for any $t\geq0$, we have
\begin{align}
 \label{eq:DK}
  \lim_{n\to\infty}\nu\bigg(\frac{S^{Y}_n}{a(n)} \leq t \bigg)
 =
 F\bigg(\frac{t}{\Gamma(1+\alpha)\mu(Y)}\bigg),
\end{align}
which follows from \cite[Theorem 3.1]{ThZw}. See also \cite[Theorem 2.1]{Zwe07cpt} and \cite[Theorem 2.1]{KocZwe}.
%Our normalization of the Mittag-Leffler distribution is the same as that of \cite{DK}, but different from that of \cite{Zwe07cpt} and \cite{Aa97}.
Note that
\begin{equation}
	 F\bigg(\frac{\tilde{c}(n)}{\Gamma(1+\alpha)\mu(Y)}\bigg)
	 \sim
	 \frac{\sin(\pi\alpha)}{\pi\alpha}\frac{\tilde{c}(n)}{\mu(Y)}
	 \quad
	 (n\to\infty).
\end{equation}
Nevertheless \eqref{eq:Thm:SY} does not follow from \eqref{eq:DK} directly.
\end{Rem}

\begin{Cor}\label{Cor1:SY}
Let $k\in \bN_{0}$. Under the setting of Theorem $\ref{Thm:SY}$,
\begin{equation}
	\mu_{1_Y}\bigg(\frac{S^{Y}_n\circ T^k}{a(n)} \leq \tilde{c}(n) \bigg)
	\bigg(=
	\mu_{\hat{T}^k 1_Y}\bigg(\frac{S^{Y}_n}{a(n)} \leq \tilde{c}(n) \bigg)
	\bigg)
	\sim
	\frac{\sin(\pi\alpha)}{\pi\alpha}\tilde{c}(n)
	\quad
	(n\to\infty).
\end{equation}	
\end{Cor}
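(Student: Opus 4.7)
The plan is to reduce to Theorem \ref{Thm:SY} itself via a sandwich argument, in direct analogy with the proof of Corollary \ref{Cor1:Z}. First, the stated equality
\begin{align}
    \mu_{1_Y}\rbra{S_n^Y\circ T^k \leq a(n)\tilde{c}(n)}
    = \mu_{\hat{T}^k 1_Y}\rbra{S_n^Y \leq a(n)\tilde{c}(n)}
\end{align}
follows at once from the defining duality $\int (v\circ T^k)u\,d\mu = \int v\,\hat{T}^k u\,d\mu$ applied with $u=1_Y$ and $v$ the indicator of $\{S_n^Y \leq a(n)\tilde{c}(n)\}$. So it suffices to analyse the left-hand side.

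The pointwise identity $S_n^Y\circ T^k = S_{n+k}^Y - S_k^Y$ together with the obvious bound $0\leq S_k^Y \leq k$ immediately yields the sandwich
\begin{align}
    \cbra{S_{n+k}^Y \leq a(n)\tilde{c}(n)}
    \subset \cbra{S_n^Y\circ T^k \leq a(n)\tilde{c}(n)}
    \subset \cbra{S_{n+k}^Y \leq a(n)\tilde{c}(n)+k}.
\end{align}
Both bounding events have the form $\{S_m^Y \leq a(m)\tilde{c}_\pm(m)\}$ with $m=n+k$ and $\tilde{c}_\pm(n+k) = (a(n)\tilde{c}(n) + \varepsilon_\pm)/a(n+k)$, where $\varepsilon_+ = k$ and $\varepsilon_-=0$. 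Theorem \ref{Thm:SY} will then produce the value $(\sin(\pi\alpha)/(\pi\alpha))\,\tilde{c}_\pm(n+k)$ on both sides of the sandwich, provided these modified sequences still satisfy \eqref{eq:tilde-c(n)}.

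The only substantive point is the regular variation of $a$. By \eqref{eq:RV:Q}, $Q^Y$ is regularly varying of index $-1+\alpha$ at $0+$, so $a(t)=t/(\Gamma(1+\alpha)Q^Y(t^{-1}))$ is regularly varying of index $\alpha$ at $\infty$; in particular $a(n+k)/a(n)\to 1$ and $k/a(n+k)\to 0$ as $n\to\infty$. These two facts together ensure both that $\tilde{c}_\pm(n+k)\to 0$ with $a(n+k)\tilde{c}_\pm(n+k)\to\infty$, verifying \eqref{eq:tilde-c(n)}, and that $\tilde{c}_\pm(n+k)\sim \tilde{c}(n)$ as $n\to\infty$. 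Squeezing the upper and lower bounds then yields the asserted equivalence. I do not anticipate any serious obstacle: the only thing to check is that the additive shifts by $k$ and by $S_k^Y$ are absorbed thanks to $a(n)\tilde{c}(n)\to\infty$, and this is exactly the occupation-time counterpart of the bookkeeping carried out in Corollary \ref{Cor1:Z}.
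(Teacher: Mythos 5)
Your argument is correct, and it is essentially the paper's sandwich argument, just implemented with a different bookkeeping of the time index. The paper uses the pointwise bound $|S_n^Y - S_n^Y\circ T^k|\leq k$ to trap $\{S_n^Y\circ T^k/a(n)\leq\tilde c(n)\}$ between $\{S_n^Y/a(n)\leq\tilde c(n)\pm k/a(n)\}$, which stays at time index $n$ and applies Theorem \ref{Thm:SY} directly to the shifted sequences $\tilde c(n)\pm k/a(n)$; the only verification needed is the elementary fact $k/a(n)\to 0$ and $(\tilde c(n)\pm k/a(n))a(n)=\tilde c(n)a(n)\pm k\to\infty$. You instead use $S_n^Y\circ T^k=S_{n+k}^Y-S_k^Y$, which shifts the time index to $n+k$ and therefore forces you to reconcile $a(n)$ with $a(n+k)$. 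That reconciliation is exactly where you invoke the regular variation of $a$ at infinity (index $\alpha$), which the paper's version never needs. Both routes land at the same estimate and the extra step is sound, but the paper's version is slightly more economical. One minor point worth being explicit about in a write-up: the sequences $\tilde c_\pm$ you feed into Theorem \ref{Thm:SY} are defined only for indices $\geq k$ and must first be extended arbitrarily to all $n\geq 0$; asymptotics are unaffected, so this is harmless, but it should be stated.
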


\begin{proof}[Proof of Corollary $\ref{Cor1:SY}$ by using Theorem $\ref{Thm:SY}$]
Since $|S^Y_n-S^Y_n\circ T^k|\leq k$, we see that
\begin{equation}
    \mu_{1_Y}\bigg(\frac{S^{Y}_n}{a(n)} \leq \tilde{c}(n)-\frac{k}{a(n)}\bigg)
    \leq
    \mu_{1_Y}\bigg(\frac{S^{Y}_n\circ T^k}{a(n)} \leq \tilde{c}(n) \bigg)
    \leq
    \mu_{1_Y}\bigg(\frac{S^{Y}_n}{a(n)} \leq \tilde{c}(n) +\frac{k}{a(n)} \bigg).	
\end{equation}
By Theorem \ref{Thm:SY}, 
\begin{equation}
	\mu_{1_Y}\bigg(\frac{S^{Y}_n}{a(n)} \leq \tilde{c}(n) \pm\frac{k}{a(n)} \bigg)\sim
	\frac{\sin(\pi\alpha)}{\pi\alpha}\tilde{c}(n)
 \quad(n\to\infty).
\end{equation}
Therefore we obtain the desired result.
\end{proof}

\begin{Cor}\label{Cor2:SY}
Suppose that the conditions \ref{Thm:SY-cond-1}, \ref{Thm:SY-cond-2}, \ref{Thm:SY-cond-3} of Theorem $\ref{Thm:SY}$ are fulfilled. Let $G\in \{u\in L^1(\mu)\::\:u\geq0\}$. Then the following assertions hold:
\begin{enumerate}[label=$(\arabic*)$]

\item\label{Cor2:SY-1} Assume that $G$ is uniformly sweeping for $1_Y$. Then there exists $C_1\in(0,\infty)$ such that, for any positive-valued sequence $\{\tilde{c}(n)\}_{n\geq0}$ satisfying \eqref{eq:tilde-c(n)}, we have
\begin{equation}
	C_1 \leq \liminf_{n\to\infty}\frac{1}{\tilde{c}(n)}\mu_G\bigg(\frac{S_n^Y}{a(n)} \leq \tilde{c}
	(n) \bigg).
	\label{eq:Cor2:SY-1}
\end{equation}

\item\label{Cor2:SY-2} Assume that $1_Y$ is uniformly sweeping for $G$.  Then there exists $C_2\in(0,\infty)$ such that, for any positive-valued sequence $\{\tilde{c}(n)\}_{n\geq0}$ satisfying \eqref{eq:tilde-c(n)}, we have
\begin{equation}
	\limsup_{n\to\infty}\frac{1}{\tilde{c}(n)}\mu_G\bigg(\frac{S_n^{Y}}{a(n)} \leq \tilde{c}(n)\bigg)
  \leq C_2.
  \label{eq:Cor2:SY-2} 
\end{equation}

\end{enumerate}	
\end{Cor}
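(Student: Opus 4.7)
The proof is parallel to that of Corollary~\ref{Cor2:Z}, with the pointwise bound $|S_n^Y\circ T^k - S_n^Y|\leq k$ playing the role of the estimate $Z_n^Y\leq Z_n^Y\circ T^k+K$ used there. In both parts, the uniform sweeping hypothesis sandwiches $\mu_G$ between controlled multiples of $\mu_{1_Y}$; Theorem~\ref{Thm:SY} (or Corollary~\ref{Cor1:SY}) then supplies the asymptotics, and the additive $O(1)$ shifts in the occupation-time threshold that this procedure introduces are absorbed by the hypothesis $\tilde c(n)a(n)\to\infty$.

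For part \ref{Cor2:SY-1}, choose $K\in\bN$ so that $K\sum_{k=0}^{K-1}\hat{T}^k G\geq 1_Y$ a.e. For each $k\in\{0,1,\dots,K-1\}$, the bound $S_n^Y\leq S_n^Y\circ T^k+K$ yields the inclusion $\{S_n^Y\circ T^k\leq \tilde c(n)a(n)-K\}\subset\{S_n^Y\leq \tilde c(n)a(n)\}$. Averaging the resulting inequality over $k$, using $\mu_G(S_n^Y\circ T^k\leq\cdot)=\mu_{\hat T^k G}(S_n^Y\leq\cdot)$, and then applying the sweeping bound, I will obtain
\begin{equation}
K\,\mu_G\bigl(S_n^Y\leq \tilde c(n)a(n)\bigr)\geq K^{-1}\mu_{1_Y}\bigl(S_n^Y\leq \tilde c(n)a(n)-K\bigr).
\end{equation}
Since $K/a(n)=o(\tilde c(n))$, Theorem~\ref{Thm:SY} shows the right-hand side is asymptotic to $K^{-1}\frac{\sin(\pi\alpha)}{\pi\alpha}\tilde c(n)$, giving \eqref{eq:Cor2:SY-1} with $C_1=K^{-2}\sin(\pi\alpha)/(\pi\alpha)$.

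For part \ref{Cor2:SY-2}, choose $K\in\bN$ so that $G\leq K\sum_{k=0}^{K-1}\hat{T}^k 1_Y$ a.e. Integrating $1_{\{S_n^Y\leq \tilde c(n)a(n)\}}$ against this bound produces
\begin{equation}
\mu_G\bigl(S_n^Y\leq \tilde c(n)a(n)\bigr)\leq K\sum_{k=0}^{K-1}\mu_{1_Y}\bigl(S_n^Y\circ T^k\leq \tilde c(n)a(n)\bigr).
\end{equation}
Now the bound $S_n^Y\leq S_n^Y\circ T^k+k\leq S_n^Y\circ T^k+K$ converts each summand to at most $\mu_{1_Y}(S_n^Y\leq \tilde c(n)a(n)+K)$, and Theorem~\ref{Thm:SY} then delivers \eqref{eq:Cor2:SY-2} with $C_2=K^2\sin(\pi\alpha)/(\pi\alpha)$.

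I do not anticipate any substantial obstacle: both bounds reduce to $O(1)$ perturbations of the occupation-time threshold in Theorem~\ref{Thm:SY}, and the standing hypothesis $\tilde c(n)a(n)\to\infty$ ensures that such perturbations only contribute a $(1+o(1))$ factor, so that $C_1$ and $C_2$ can be extracted independently of the sequence $\{\tilde c(n)\}$.
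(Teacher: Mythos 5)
Your proposal is correct and matches the paper's proof in substance: the same sweeping-constant $K$, the same $O(1)$ shift of the threshold via $|S_n^Y\circ T^k - S_n^Y|\le k$, and the same absorption of $K/a(n)$ into $\tilde c(n)$ using $\tilde c(n)a(n)\to\infty$. The only cosmetic difference is in part \ref{Cor2:SY-2}, where the paper simply invokes Corollary~\ref{Cor1:SY} for the asymptotics of $\mu_{1_Y}(S_n^Y\circ T^k/a(n)\le\tilde c(n))$ rather than re-deriving the $+K$ shift, but Corollary~\ref{Cor1:SY} is proved by exactly your argument, so the two routes are the same.
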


\begin{proof}[Proof of Corollary $\ref{Cor2:SY}$ by using Theorem $\ref{Thm:SY}$]

\ref{Cor2:SY-1} Take $K\in\bN$ so large that $\sum_{k=0}^{K-1} \hat{T}^k G\geq K^{-1}1_Y$, a.e.
Let $k\in\{0,1,2,\dots,K\}$. Then
\begin{equation}
 	\mu_G\bigg(\frac{S^{Y}_n}{a(n)} \leq \tilde{c}(n) \bigg)
 	\geq
 	\mu_G\bigg(\frac{S^{Y}_n\circ T^k}{a(n)} \leq \tilde{c}(n)-\frac{K}{a(n)}\bigg).
\end{equation}
Hence Theorem \ref{Thm:SY} implies
\begin{equation}
\begin{split}
	\mu_G\bigg(\frac{S^{Y}_n}{a(n)} \leq \tilde{c}(n) \bigg)
	&\geq
	K^{-1}\sum_{k=0}^{K-1}
	\mu_G\bigg(\frac{S^{Y}_n\circ T^k}{a(n)} \leq \tilde{c}(n)-\frac{K}{a(n)}\bigg)
	\\
	&\geq
	K^{-2}
	\mu_{1_Y}\bigg(\frac{S^{Y}_n}{a(n)} \leq \tilde{c}(n)-\frac{K}{a(n)}\bigg)
	\\
	&\sim
	\frac{\sin(\pi\alpha)}{K^{2}\pi\alpha}\tilde{c}(n)
	\quad(n\to\infty).
\end{split}	
\end{equation}
which implies the desired result.

\ref{Cor2:SY-2}  Take $K\in\bN$ large enough that $G\leq K\sum_{k=0}^{K-1} \hat{T}^k 1_Y$, a.e. Then we use Corollary \ref{Cor1:SY} to obtain
\begin{equation}
\begin{split}
\mu_G\bigg(\frac{S^{Y}_n}{a(n)} \leq \tilde{c}(n) \bigg)
&\leq
K\sum_{k=0}^{K-1}\mu_{1_Y}\bigg(\frac{S^{Y}_n\circ T^k}{a(n)} \leq \tilde{c}(n)\bigg)
\\
&\sim
K^2\frac{\sin(\pi\alpha)}{\pi\alpha}\tilde{c}(n)
\quad(n\to\infty).
\end{split}	
\end{equation}
We now complete the proof.
\end{proof}

\subsection{Large deviation estimates related to the Lamperti  generalized arcsine law}

In the following we consider occupation times on sets of infinite measure under certain additional assumptions. Fix  disjoint sets $Y, A_1, A_2,\dots,A_d \in \cA$ with $d\in\bN$, $d\geq2$, $X=Y\cup \bigcup_{i=1}^d A_i$, $0<\mu(Y)<\infty$ and $\mu(A_i)=\infty$ $(i=1,2,\dots,d)$. 
We assume \emph{$Y$ dynamically separates $A_1, A_2,\dots,A_d$   (under the action of $T$)}, that is, $A_i\cap T^{-1}A_j =\emptyset$ whenever $i\neq j$.  
Then the condition [$x\in A_i$ and $T^n x\in A_j$ $(i\neq j)$] implies $n\geq2$ and the existence of $k=k(x)\in\{1,\dots,n-1\}$ for which $T^k x\in Y$. 
As shown in  \cite[(6.6)]{ThZw},
\begin{align}
	 \label{eq:1YnAi}
	1_{Y_n\cap A_i}
	&=
	\sum_{k>n}\hat{T}^{k-n}1_{Y\cap (T^{-1}A_i) \cap\{\varphi=k\}}
	\quad
	\text{a.e.}\;\;(n\in\bN, \; i=1,2,\dots,d).
\end{align}
and $\mu(Y_n\cap A_i)=\mu(Y\cap (T^{-1}A_i) \cap \{\varphi>n\})$ ($n\in\bN$, $i=1,2,\dots,d$).
Let $\{w^{Y, A_i}_n\}_{n\geq0}$ denote the \emph{wandering rate of $Y$ starting from $A_i$}, which is given by
\begin{align}
    w^{Y, A_i}_n
    &=
    \mu\bigg(
    \bigcup_{k=0}^{n-1} (T^{-k}Y)\cap A_i
    \bigg)
    =
    \sum_{k=0}^{n-1}
    \mu(Y_k\cap A_i)
    \\
    &=
    \sum_{k=1}^{n-1}
    \mu(Y\cap (T^{-1}A_i) \cap \{\varphi>k\})
    \quad
    (n\in\bN_0,\;i=1,2,\dots,d).
    \label{eq:wYAi}
\end{align}
We write $Q^{Y, A_i}(s)$ for Laplace transform of $\{w^{Y,A_i}_{n+1}-w^{Y, A_i}_n\}_{n\geq0}$:
\begin{align}
	Q^{Y,A_i}(s)
	&=\sum_{n\geq0} e^{-ns}\big(w_{n+1}^{Y,A_i}-w_{n}^{Y,A_i}\big)
	\\
	&=\sum_{n\geq1}e^{-ns}\mu(Y\cap (T^{-1}A_i)\cap\{\varphi>n\})\quad(s>0,\;i=1,2,\dots,d).	
\end{align}
Then $w^{Y}_n=\mu(Y)+\sum_{i=1}^d w^{Y, A_i}_n$ and $Q^Y(s)=\mu(Y)+\sum_{i=1}^d Q^{Y,A_i}(s)$.
Let $\alpha, \beta_1,\beta_2,\dots,\beta_d\in (0,1)$ with $\sum_{i=1}^d \beta_i=1$ and let $\ell:(0,\infty)\to(0,\infty)$ be a positive, measurable function slowly varying at $\infty$. By Karamata's Tauberian theorem, the condition 
\begin{equation}
	w_n^{Y,A_i}
	\sim
	\beta_i n^{1-\alpha}\ell(n)
	\quad
	(n\to\infty,\;i=1,2,\dots,d)
\end{equation}
is equivalent to
\begin{align}\label{eq:RV:Qi}
	 Q^{Y,A_i}(s)\sim \Gamma(2-\alpha) \beta_i s^{-1+\alpha} \ell(s^{-1})
	 \quad
	 (s\to 0+,\;i=1,2,\dots,d).
\end{align}

\begin{Thm}\label{Thm:SA}
Suppose the following conditions \ref{Thm:SA-cond-1}, \ref{Thm:SA-cond-2}, \ref{Thm:SA-cond-3}, \ref{Thm:SA-cond-4}, \ref{Thm:SA-cond-5} are satisfied:
\begin{enumerate}[label=\textup{(C\arabic*)}]
	\item\label{Thm:SA-cond-1} $T$ is a CEMPT on a $\sigma$-finite measure $(X,\cA, \mu)$, and $X=Y\cup \bigcup_{i=1}^d  A_i$ for some $d\in\bN$, $d\geq2$ and some disjoint sets $Y, A_1, A_2,\dots,A_d \in \cA$ satisfying $0<\mu(Y)<\infty$ and $\mu(A_i)=\infty$ $(i=1,2,\dots,d)$. In addition $Y$ dynamically separates $A_1, A_2,\dots,A_d$.

	\item\label{Thm:SA-cond-2} $w^{Y, A_i}_n\sim \beta_i n^{1-\alpha}\ell(n)$ $(n\to\infty,\; i=1,2,\dots,d)$ for some $\alpha, \beta_1, \beta_2,\dots,\beta_d \in(0,1)$ with $\sum_{i=1}^d \beta_i=1$ and for some positive, measurable function $\ell:(0,\infty)\to(0,\infty)$ slowly varying at $\infty$.
\item \label{Thm:SA-cond-3}
    $\displaystyle 
	\bigg\{
	\frac{1}{w_n^{Y,A_d}}
	\sum_{k=0}^{n-1}\hat{T}^k 1_{Y_k\cap A_d}
	\bigg\}_{n\geq2}$ is $L^\infty(\mu)$-bounded. 
\item \label{Thm:SA-cond-4} There exists $N\geq2$ such that $\displaystyle \bigg\{\frac{1}{w_n^{Y,A_i}}\sum_{k=0}^{n-1}\hat{T}^k 1_{Y_k\cap A_i}\bigg\}_{n\geq N,\;i=1,\dots,d-1}$ is uniformly sweeping for $1_Y$.

\item\label{Thm:SA-cond-5}  There exist  $H^{(1)},\dots,H^{(d-1)}\in L^\infty(\mu)$  such that
\begin{equation}
	\lim_{n\to\infty}\bigg(\frac{1}{w_n^{Y,A_i}}
	 \sum_{k=0}^{n-1} \hat{T}^k 1_{Y_k\cap A_i}\bigg)
   = H^{(i)}
   \quad \text{in $L^\infty(\mu)$}
   \;\;(i=1,\dots,d-1).
\end{equation}

\end{enumerate}
Let $\lambda_1,\dots,\lambda_{d-1}\in(0,\infty)$ and let $\{c(n)\}_{n\geq0}$ be a positive-valued sequence satisfying  \eqref{eq:c(n)}.
Set $\tilde{H}=\sum_{i=1}^{d-1}\beta_i \lambda_i^\alpha H^{(i)}$.
Then 
\begin{equation}\label{eq:Thm:SA}
   \mu_{\tilde{H}}\bigg(\frac{1}{n}\sum_{i=1}^{d-1}\lambda_i S_n^{A_i}\leq c(n)\bigg)
   \sim
   \beta_d \frac{\sin(\pi \alpha)}{\pi\alpha}
   \frac{c(n)^\alpha \ell(n)}{\ell(c(n)n)}
   \quad
   (n\to\infty).	
\end{equation}
\end{Thm}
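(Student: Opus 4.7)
The plan is to follow the double Laplace transform strategy announced in the introduction and already used for Theorems \ref{Thm:Z} and \ref{Thm:SY}, adapted to the multi-arm setting of Theorem \ref{Thm:SA}. Set $f := \sum_{i=1}^{d-1}\lambda_i 1_{A_i}$ so that $V_n := \sum_{i=1}^{d-1}\lambda_i S_n^{A_i} = \sum_{k=1}^{n} f\circ T^k$. Introduce the twisted transfer operator $\hat{T}_\xi u := e^{-\xi f}\hat{T}u$ for $\xi\geq 0$; the standard duality for $\hat{T}$ immediately gives $\int_X e^{-\xi V_n}u\,d\mu = \int_X \hat{T}_\xi^n u\,d\mu$ for every non-negative measurable $u$. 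The fundamental object to analyse is then the double Laplace transform
\begin{align*}
\cL_n(q,\xi) := \frac{1}{n}\sum_{k\geq 0}e^{-qk/n}\int_X e^{-\xi V_k/n}\tilde{H}\,d\mu = \frac{1}{n}\int_X \bigl(I - e^{-q/n}\hat{T}_{\xi/n}\bigr)^{-1}\tilde{H}\,d\mu,
\end{align*}
to be studied in the regime $n\to\infty$ with $q,\xi$ from a fixed compact set, and eventually with $\xi$ scaled by a factor of order $1/c(n)$.

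The second step is to decompose $\cL_n(q,\xi)$ using hypothesis \ref{Thm:SA-cond-1}, which says $Y$ dynamically separates $A_1,\dots,A_d$. Every orbit segment between consecutive visits to $Y$ lies in a single $A_i$, so identities \eqref{eq:1Yn}--\eqref{eq:1YnAi} allow expanding the resolvent $(I - e^{-q/n}\hat{T}_{\xi/n})^{-1}$ as a sum over excursions labelled by $i\in\{1,\dots,d\}$; an excursion into $A_i$ of length $k$ contributes the weight $e^{-(q+\xi\lambda_i)k/n}$ with the convention $\lambda_d = 0$. After summation these weights assemble into shifted generating functions $Q^{Y,A_i}\bigl((q+\xi\lambda_i)/n\bigr)$, to which the regular variation \eqref{eq:RV:Qi} and Karamata's Tauberian theorem apply. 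The convergence hypotheses \ref{Thm:SA-cond-4}--\ref{Thm:SA-cond-5} control the action of $\hat{T}^k\tilde{H}$ through the limits $H^{(i)}$ for $i<d$, while \ref{Thm:SA-cond-3} supplies the uniform $L^\infty$-bound needed to handle the $i=d$ contribution, for which no limit entrance density is postulated. The specific weighting $\tilde{H}=\sum_{i<d}\beta_i\lambda_i^\alpha H^{(i)}$ is chosen so that these Karamata-type asymptotics combine cleanly, and the resulting limit formula for $\cL_n(q,\xi)$ isolates $\beta_d$ as the residual mass carried by the $A_d$-excursions.

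Finally, a Tauberian-type inversion recovers $\mu_{\tilde H}(V_n/n\leq c(n))$ from the behaviour of $\cL_n(q,\xi)$ as $\xi\to\infty$ with $q$ fixed (or with $q$ and $\xi$ scaled jointly). Here I would invoke the analytic lemmas of Section~\ref{Sec:Ana}, as in the proofs of Theorems~\ref{Thm:Z} and~\ref{Thm:SY}, and the factor $\sin(\pi\alpha)/(\pi\alpha)$ in the conclusion should emerge through the same complex-analytic identity that produces it in \eqref{eq:Thm:Z}. I expect this inversion to be the main obstacle: the event $\{V_n/n\leq c(n)\}$ contributes negligibly to every moment of $V_n/(c(n)n)$, so moment methods are useless, and one must track the sub-dominant behaviour of the resolvent with great care, while keeping uniform control of the slowly varying factor $\ell(n)/\ell(c(n)n)$ through the uniform convergence theorem \cite[Theorem 1.2.1]{BGT}. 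A secondary delicate point is to verify that the asymmetric hypothesis \ref{Thm:SA-cond-3}, which provides only $L^\infty$-boundedness and not convergence of the $A_d$-entrance density, suffices at the level of the double Laplace transform; this parallels the asymmetry between \ref{Thm:SY-cond-3} and \ref{Thm:Z-cond-4} in Theorems~\ref{Thm:SY} and~\ref{Thm:Z}.
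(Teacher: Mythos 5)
Your proposal is correct and follows essentially the same route as the paper: the double Laplace transform of $V_n=\sum_{i<d}\lambda_i S_n^{A_i}$ is decomposed excursion-by-excursion via dynamical separation into the generating functions $Q^{Y,A_i}$, the weighting $\tilde{H}=\sum_{i<d}\beta_i\lambda_i^\alpha H^{(i)}$ makes the $i<d$ contributions combine, the $A_d$-term (controlled only in $L^\infty$ by \ref{Thm:SA-cond-3}) supplies the residual $\beta_d$, and the Laplace inversion Lemma \ref{Lem:Laplace} together with the extended continuity theorem produces \eqref{eq:Thm:SA}. The twisted-transfer-operator resolvent $(I-e^{-q/n}\hat{T}_{\xi/n})^{-1}$ is just a packaging of the identity proved in Lemma~\ref{Lem:decompose-SA}'s precursor, so this is cosmetic rather than a genuine alternative.
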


The proof of Theorem \ref{Thm:SA} will be given in Section \ref{Sec:proof-SA}.

\begin{Rem}
Under the setting of Theorem \ref{Thm:SA} with $d=2$ and $\lambda_{1}=1$, let us assume \eqref{eq:c(n)-2}. Then we obtain
\begin{align}\label{eq:Thm:SA-modif}
   	 \mu_{H^{(1)}}\bigg(\frac{S_n^{A_1}}{n}\leq c(n)\bigg)
   \sim
   \frac{\beta_2}{\beta_1} \frac{\sin(\pi \alpha)}{\pi\alpha}
   c(n)^\alpha
   \quad
   (n\to\infty).
\end{align}	
\end{Rem}

\begin{Rem}
If $\lambda_i=0$ for some $i$, then \eqref{eq:Thm:SA} does not remain valid. For example, let $d\geq3$,  $\lambda_1, \dots, \lambda_{d-2}\in (0,\infty)$ and $\lambda_{d-1}=0$. Then
\begin{align}
	\mu_{\tilde{H}}\bigg(\frac{1}{n}\sum_{i=1}^{d-2}\lambda_i S_n^{A_i}\leq c(n)\bigg)
   \sim
   (\beta_{d-1}+\beta_d) \frac{\sin(\pi \alpha)}{\pi\alpha}
    \frac{c(n)^\alpha \ell(n)}{\ell(c(n)n)}
   \quad
   (n\to\infty),
\end{align}
which follows from Theorem \ref{Thm:SA}.	
\end{Rem}

%\begin{Rem}
%The asymptotic relation \eqref{eq:Thm:LD2} does not hold when $t_i=\infty$ for some $1\leq i \leq e$.  
%Nevertheless the Laplace transform \eqref{eq:m} also holds when $\lambda_i=0$ for some $1\leq i \leq e$. 
%By substituting $\lambda_1=\dots=\lambda_e=0$ into $\eqref{eq:m}$, we see that $m_{\alpha,\beta,e}([0,\infty)^e)=\infty$.   
%\end{Rem}

\begin{Rem}[Comparison to the Lamperti generalized arcsine law]
Let us recall the Lamperti generalized arcsine law for occupation times.
Suppose the  conditions \ref{Thm:SA-cond-1}, \ref{Thm:SA-cond-2}, \ref{Thm:SA-cond-5} of Thereom \ref{Thm:SA} and \ref{Thm:Z-cond-3} of Theorem \ref{Thm:Z} are fulfilled with $d=2$. 
Set $b=\beta_2/\beta_1$.
Then, for any $\mu$-absolutely continuous probability measure $\nu$ on $(X,\cA)$ and for any $0\leq t\leq 1$, we have
\begin{align}
	   \lim_{n\to\infty}\nu\bigg(\frac{S_n^{A_1}}{n} \leq t\bigg)
  &=
   \frac{b\sin(\pi\alpha)}{\pi}
   \int_0^t 
    \frac{s^{\alpha-1} (1-s)^{\alpha-1}\,ds}
    {b^2s^{2\alpha} +2bs^\alpha (1-s)^\alpha \cos(\pi\alpha)  
    +(1-s)^{2\alpha}}
\\
&=
\frac{1}{\pi\alpha} \mathop{\rm arccot}\bigg(\frac{(1-t)^\alpha}{b  \sin(\pi\alpha) t^\alpha}+\cot(\pi\alpha)\bigg),
\label{eq:Arcsine-S}
\end{align}
as shown in \cite[Theorem 2.2]{Zwe07cpt}.
See also \cite[Theorem 3.2]{ThZw} and \cite[Theorem 2.7]{SerYan19}.
The limit is the distribution function of the Lamperti generalized arcsine distribution of parameter $(\alpha,\beta_1)$. In the case $\alpha=\beta_1=\beta_2=1/2$, this distribution is the usual arcsine distribution.
Note that the right-hand side of \eqref{eq:Thm:SA-modif} is asymptotically the same as the right-hand side of \eqref{eq:Arcsine-S} with $t=c(n)$.
Nevertheless \eqref{eq:Thm:SA-modif} does not follow from \eqref{eq:Arcsine-S} directly.
\end{Rem}

The proofs of the following two corollaries are almost the same as those of Corollaries \ref{Cor1:Z}, \ref{Cor2:Z}, \ref{Cor1:SY} and \ref{Cor2:SY}, so we omit them. 

\begin{Cor}\label{Cor1:SA}
Let $k\in\bN_0$. Under the setting of Theorem $\ref{Thm:SA}$, 
\begin{align}
 \mu_{\tilde{H}}\bigg(\frac{1}{n}\sum_{i=1}^{d-1}\lambda_i S_n^{A_i}\circ T^k \leq c(n)\bigg)
  \sim
 \beta_d \frac{\sin(\pi\alpha)}{\pi\alpha}\frac{c(n)^\alpha\ell(n)}{\ell(c(n)n)}
 \quad(n\to\infty).  	
\end{align}
\end{Cor}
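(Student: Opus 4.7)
The plan is to mimic the shift argument used in the proofs of Corollaries \ref{Cor1:Z} and \ref{Cor1:SY}: sandwich the shifted occupation times between small perturbations of the unshifted ones, and then invoke Theorem \ref{Thm:SA}. The key elementary observation is that, for any $i\in\{1,\dots,d-1\}$ and $x\in X$,
\[
 S_n^{A_i}\circ T^k(x)=\sum_{j=1}^n 1_{A_i}(T^{j+k}x)=S_{n+k}^{A_i}(x)-S_k^{A_i}(x),
\]
so $|S_n^{A_i}\circ T^k-S_n^{A_i}|\leq k$ uniformly in $x$. Setting $M=\sum_{i=1}^{d-1}\lambda_i$, this yields
\[
\bigg|\frac{1}{n}\sum_{i=1}^{d-1}\lambda_i S_n^{A_i}\circ T^k-\frac{1}{n}\sum_{i=1}^{d-1}\lambda_i S_n^{A_i}\bigg|\leq \frac{Mk}{n}.
\]

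The resulting sandwich inequality gives
\[
\mu_{\tilde{H}}\bigg(\frac{1}{n}\sum_{i=1}^{d-1}\lambda_i S_n^{A_i}\leq c_{-}(n)\bigg)\leq\mu_{\tilde{H}}\bigg(\frac{1}{n}\sum_{i=1}^{d-1}\lambda_i S_n^{A_i}\circ T^k\leq c(n)\bigg)\leq\mu_{\tilde{H}}\bigg(\frac{1}{n}\sum_{i=1}^{d-1}\lambda_i S_n^{A_i}\leq c_{+}(n)\bigg),
\]
where $c_{\pm}(n)=c(n)\pm Mk/n$. The next step is to verify that both $c_{\pm}(n)$ still satisfy \eqref{eq:c(n)}: clearly $c_{\pm}(n)\to 0$ since $c(n)\to 0$ and $k/n\to 0$, and $c_{\pm}(n)n=c(n)n\pm Mk\to\infty$ since $c(n)n\to\infty$. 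Hence Theorem \ref{Thm:SA} applies to both extremes, producing the asymptotics
\[
\mu_{\tilde{H}}\bigg(\frac{1}{n}\sum_{i=1}^{d-1}\lambda_i S_n^{A_i}\leq c_{\pm}(n)\bigg)\sim \beta_d\frac{\sin(\pi\alpha)}{\pi\alpha}\frac{c_{\pm}(n)^\alpha\ell(n)}{\ell(c_{\pm}(n)n)}.
\]

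Finally, I would show that each upper and lower asymptotic collapses to $\beta_d\frac{\sin(\pi\alpha)}{\pi\alpha}\frac{c(n)^\alpha\ell(n)}{\ell(c(n)n)}$. Since $Mk/(c(n)n)\to 0$, we have $c_{\pm}(n)^\alpha/c(n)^\alpha=(1\pm Mk/(c(n)n))^\alpha\to 1$; and since $c_{\pm}(n)n/(c(n)n)\to 1$ with $c(n)n\to\infty$, the uniform convergence theorem for slowly varying functions \cite[Theorem 1.2.1]{BGT} yields $\ell(c_{\pm}(n)n)/\ell(c(n)n)\to 1$. A squeeze then completes the proof. There is no substantive obstacle here: the whole argument is parallel to the proofs of Corollaries \ref{Cor1:Z} and \ref{Cor1:SY}, and the only point requiring any care is checking admissibility of the perturbed sequences $c_{\pm}(n)$ for Theorem \ref{Thm:SA}, which is immediate from the hypothesis \eqref{eq:c(n)}.
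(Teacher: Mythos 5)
Your proof is correct and is exactly the kind of argument the paper has in mind: the paper omits the proof of Corollary \ref{Cor1:SA}, stating only that it is ``almost the same'' as those of Corollaries \ref{Cor1:Z} and \ref{Cor1:SY}, and your sandwich argument via $|S_n^{A_i}\circ T^k-S_n^{A_i}|\leq k$ with perturbed thresholds $c_\pm(n)=c(n)\pm Mk/n$ is precisely the Cor.~\ref{Cor1:SY}-style shift argument adapted to the weighted sum. The only point worth flagging is that $c_-(n)$ could be nonpositive for finitely many $n$, but since $c(n)n\to\infty$ ensures $c_-(n)>0$ eventually and the statement is asymptotic, this is harmless (and the paper's own proof of Corollary \ref{Cor1:SY} glosses over the same point).
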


\begin{Cor}\label{Cor2:SA}
Suppose that the conditions \ref{Thm:SA-cond-1}, \ref{Thm:SA-cond-2}, \ref{Thm:SA-cond-3}, \ref{Thm:SA-cond-4}, \ref{Thm:SA-cond-5} of Theorem $\ref{Thm:SA}$ are satisfied. Let $G\in\{u\in L^1(\mu)\::\:u\geq0\}$ and $\lambda_1,\dots,\lambda_{d-1}\in (0,\infty)$. Then the following assertions hold:
\begin{enumerate}[label=$(\arabic*)$]

\item\label{Cor2:SA-1} Assume that  $G$ is uniformly sweeping for $1_Y$. Then there exists  $C_1\in(0,\infty)$ such that, for any positive-valued sequence $\{c(n)\}_{n\geq0}$ satisfying \eqref{eq:c(n)}, we have
\begin{equation}\label{eq:Cor2:SA-1}
	C_1 \leq 
	\liminf_{n\to\infty}
	\frac{\ell(c(n)n)}{c(n)^\alpha\ell(n)}
	\mu_G\bigg(\frac{1}{n}\sum_{i=1}^{d-1}\lambda_i S_n^{A_i} \leq c(n) \bigg).
\end{equation}

\item\label{Cor2:SA-2} Assume that $1_Y$ is uniformly sweeping for $G$.  Then there exists $C_2\in(0,\infty)$ such that, for any positive-valued sequence $\{c(n)\}_{n\geq0}$ satisfying \eqref{eq:c(n)}, we have
\begin{equation}\label{eq:Cor2:SA-2}
	\limsup_{n\to\infty}
		\frac{\ell(c(n)n)}{c(n)^\alpha\ell(n)}
	\mu_G\bigg(\frac{1}{n}\sum_{i=1}^{d-1}\lambda_i S_n^{A_i}\leq c(n) \bigg)
  \leq C_2. 
\end{equation}
\end{enumerate}
\end{Cor}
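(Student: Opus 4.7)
My plan is to follow the same template used for Corollaries~\ref{Cor2:Z} and~\ref{Cor2:SY}. The common ingredients will be Corollary~\ref{Cor1:SA} (the sharp asymptotic for the shifted initial density $\hat{T}^k\tilde{H}$), the near-invariance $|S_n^{A_i}(x)-S_n^{A_i}(T^k x)|\leq k$, and the adjoint identity $\int(v\circ T^k)u\,d\mu=\int v\cdot\hat{T}^k u\,d\mu$. Set $\Lambda=\sum_{i=1}^{d-1}\lambda_i$, $E_n=\{\sum_{i=1}^{d-1}\lambda_i S_n^{A_i}\leq c(n)n\}$, and $E_n^{\pm}=\{\sum_{i=1}^{d-1}\lambda_i S_n^{A_i}\leq c(n)n\pm K\Lambda\}$, where $K$ will be chosen below. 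I first observe that each $H^{(i)}$ is supported on $Y$: it is the $L^\infty$-limit of the functions $\hat{T}^k 1_{Y_k\cap A_i}$ (for $k\geq 1$), each of which vanishes off $Y$ because $T^k(Y_k)\subseteq Y$. Consequently $\tilde{H}\leq M\cdot 1_Y$ a.e.\ with $M=\sum_{i=1}^{d-1}\beta_i\lambda_i^\alpha\|H^{(i)}\|_\infty$.

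For part~\ref{Cor2:SA-1}, this bound promotes uniform sweeping of $1_Y$ by $G$ to uniform sweeping of $\tilde{H}$ by $G$. I pick $K$ with $\sum_{k=0}^{K-1}\hat{T}^k G\geq K^{-1}\tilde{H}$ a.e. Using the inclusion $T^{-k}(E_n^{-})\subseteq E_n$ valid for $0\leq k\leq K-1$ (a direct consequence of the near-invariance), the adjoint identity gives
\begin{align}
\mu_G(E_n)\geq \frac{1}{K}\sum_{k=0}^{K-1}\int G\cdot(1_{E_n^{-}}\circ T^k)\,d\mu=\frac{1}{K}\int\bigg(\sum_{k=0}^{K-1}\hat{T}^k G\bigg)1_{E_n^{-}}\,d\mu\geq \frac{1}{K^2}\mu_{\tilde{H}}(E_n^{-}).
\end{align}
Theorem~\ref{Thm:SA} and the uniform convergence theorem for slowly varying functions yield $\mu_{\tilde{H}}(E_n^{-})\sim \beta_d(\sin(\pi\alpha)/(\pi\alpha))c(n)^\alpha\ell(n)/\ell(c(n)n)$, since $K\Lambda/n=o(c(n))$; this delivers the required lower bound with $C_1=\beta_d\sin(\pi\alpha)/(K^2\pi\alpha)$.

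Part~\ref{Cor2:SA-2} is more delicate, and I expect the main obstacle to be the converse sweeping assertion \emph{$\tilde{H}$ uniformly sweeps $1_Y$}, since the hypothesis only supplies $1_Y$ sweeping $G$. I plan to extract it by passing condition~\ref{Thm:SA-cond-4} to the $L^\infty$-limit: \ref{Thm:SA-cond-4} provides constants $C,K'$ with
\begin{align}
C\sum_{k=0}^{K'}\hat{T}^k\bigg(\frac{1}{w_n^{Y,A_i}}\sum_{j=0}^{n-1}\hat{T}^j 1_{Y_j\cap A_i}\bigg)\geq 1_Y\quad\text{a.e.}
\end{align}
for all $n\geq N$ and $i=1,\dots,d-1$. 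Because $T$ is $\mu$-preserving, $\hat{T}1_X=1_X$, so $\hat{T}$ is an $L^\infty$-contraction; combining this with~\ref{Thm:SA-cond-5} and letting $n\to\infty$ gives $C\sum_{k=0}^{K'}\hat{T}^k H^{(i)}\geq 1_Y$ a.e.\ for every $i\leq d-1$, whence $\tilde{H}$ uniformly sweeps $1_Y$ and, by the hypothesis, sweeps $G$. I then pick $K$ with $G\leq K\sum_{k=0}^{K-1}\hat{T}^k\tilde{H}$ a.e.; the inclusion $1_{E_n}\circ T^k\leq 1_{E_n^{+}}$ combined with the adjoint identity gives $\mu_G(E_n)\leq K^2\mu_{\tilde{H}}(E_n^{+})$, and a final application of Theorem~\ref{Thm:SA} yields the upper bound with $C_2=K^2\beta_d\sin(\pi\alpha)/(\pi\alpha)$.
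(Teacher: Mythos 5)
Your proof is correct and follows exactly the template the paper invokes: the paper omits the argument for Corollary \ref{Cor2:SA}, noting it is ``almost the same'' as for Corollaries \ref{Cor2:Z} and \ref{Cor2:SY}, and your write-up is a faithful realization of that analogy (transfer uniform sweeping between $1_Y$ and $\tilde{H}$ using that each $H^{(i)}$ is bounded and supported on $Y$, shift by $T^k$ at the cost of a threshold correction $K\Lambda/n = o(c(n))$, and invoke Theorem \ref{Thm:SA} with the uniform convergence theorem for slowly varying functions). Your derivation of ``$\tilde{H}$ uniformly sweeps $1_Y$'' from \ref{Thm:SA-cond-4} and \ref{Thm:SA-cond-5} by passing to the $L^\infty$-limit (using that $\hat{T}$ is a positive operator with $\hat{T}1_X=1_X$, hence an $L^\infty$-contraction) is the same mechanism the paper records in the remark at the end of Section \ref{Sec:Pre} for the single-$H$ case.
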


The assertion \ref{Cor2:SA-2} remains valid even if we drop the condition \ref{Thm:SA-cond-5} of Theorem \ref{Thm:SA}:

\begin{Thm}\label{Thm:SA-weak}
Suppose that the conditions \ref{Thm:SA-cond-1}, \ref{Thm:SA-cond-2}, \ref{Thm:SA-cond-3}, \ref{Thm:SA-cond-4} of Theorem $\ref{Thm:SA}$ are satisfied. Let $G\in  \{u\in L^1(\mu)\::\:u\geq0\}$ and $\lambda_1,\dots,\lambda_{d-1}\in (0,\infty)$. Then the assertion \ref{Cor2:SA-2} of Corollary $\ref{Cor2:SA}$ holds.
\end{Thm}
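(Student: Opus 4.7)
The plan is to mimic the proof of Theorem \ref{Thm:Z-weak}, replacing the Laplace transform of $Z_n^Y$ by that of the weighted occupation sum $\sum_{i=1}^{d-1}\lambda_i S_n^{A_i}$. As the remark following Theorem \ref{Thm:Z-weak} emphasises, the double Laplace transform method produces only an upper bound on the probability, but precisely because of this one-sidedness the existence-of-entrance-density hypothesis \ref{Thm:SA-cond-5} is dispensable.

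First, I would apply the elementary inequality $\mathbf{1}_{\{X \leq c\}} \leq e \cdot e^{-X/c}$ (valid for $X \geq 0$ and $c > 0$) with $X = \sum_i \lambda_i S_n^{A_i}$, $c = c(n)n$, and set $\lambda_n := 1/(c(n)n)$. This yields
\begin{equation}
    \mu_G\!\bigg( \tfrac{1}{n}\textstyle\sum_i \lambda_i S_n^{A_i} \leq c(n) \bigg)
    \leq e \int \exp\!\bigg( -\lambda_n \textstyle\sum_i \lambda_i S_n^{A_i} \bigg)\, d\mu_G.
\end{equation}
Using the hypothesis that $1_Y$ is uniformly sweeping for $G$, pick $K \in \bN_0$ and $C > 0$ with $G \leq C \sum_{k=0}^K \hat{T}^k 1_Y$ a.e. The transfer operator duality $\int f \cdot \hat{T}^k 1_Y\, d\mu = \int (f \circ T^k)\, d\mu_{1_Y}$, combined with the uniform bound $|S_n^{A_i} \circ T^k - S_n^{A_i}| \leq k$ (which produces an inner-exponential factor $1 + o(1)$ since $\lambda_n \to 0$), reduces the theorem to the estimate
\begin{equation}\label{eq:plan-target}
    \int \exp\!\bigg( -\lambda_n \textstyle\sum_i \lambda_i S_n^{A_i} \bigg)\, d\mu_{1_Y} = O\!\bigg( \frac{c(n)^\alpha \ell(n)}{\ell(c(n)n)} \bigg) \quad (n \to \infty).
\end{equation}

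Second, I would prove \eqref{eq:plan-target} using the double Laplace transform machinery of Section \ref{Sec:Ana}. For $q, \lambda > 0$, set
\begin{equation}
    J(q, \lambda) := \sum_{m \geq 0} e^{-qm}\int \exp\!\bigg( -\lambda \textstyle\sum_i \lambda_i S_m^{A_i} \bigg)\, d\mu_{1_Y}.
\end{equation}
Since $Y$ dynamically separates $A_1, \dots, A_d$, every excursion from $Y$ of length $\tau \geq 2$ entering $A_i$ contributes exactly $\lambda_i (\tau - 1)$ to $\sum_i \lambda_i S^{A_i}$, with $A_d$-excursions contributing nothing because $\lambda_d$ is absent. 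A standard renewal identity then expresses $J(q,\lambda)$ in closed form through the Laplace quantities $Q^{Y, A_i}(q + \lambda \lambda_i)$ for $i = 1, \dots, d-1$ and $Q^{Y, A_d}(q)$. Inserting the Karamata asymptotic \eqref{eq:RV:Qi} and applying the analytic lemmas of Section \ref{Sec:Ana}, one obtains the asymptotic of $J(q, \lambda)$ in the regime $q \asymp 1/n$, $\lambda = \lambda_n$; a Karamata Tauberian extraction then converts this into the pointwise bound \eqref{eq:plan-target}.

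The main obstacle is the renewal/analytic bookkeeping inside $J(q, \lambda)$: one must track simultaneously the penalised excursions into $A_1, \dots, A_{d-1}$ and the free excursions into $A_d$ (which are the ones driving the small-occupation event), and match the resulting mixture of slowly varying factors to produce the precise rate $c(n)^\alpha \ell(n)/\ell(c(n)n)$. As in Theorem \ref{Thm:Z-weak}, without \ref{Thm:SA-cond-5} there is no limiting density available to furnish a matching lower bound, which is exactly why only the one-sided assertion survives.
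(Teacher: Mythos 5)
Your high-level strategy — Chebyshev bound, reduction via sweeping, double Laplace transform, renewal identity — is the same as the paper's, and you have correctly identified why the upper-bound conclusion survives without \ref{Thm:SA-cond-5}. But there is a concrete gap in the middle of your plan: the claim that ``a standard renewal identity expresses $J(q,\lambda)$ in closed form through $Q^{Y,A_i}(q+\lambda\lambda_i)$ and $Q^{Y,A_d}(q)$'' is false. The renewal identity in this setting (equation \eqref{Lem:decompose-SA-1}) does not close on
\begin{equation}
J(q,\lambda)=\sum_{m\geq0}e^{-qm}\int_Y R_{m,t}\,d\mu
\end{equation}
alone. It relates $(1-e^{-s})J$ to the $Q$'s \emph{together with} the cross integrals
\begin{equation}
\int_Y \bigg(\sum_{n\geq0}e^{-ns}R_{n,t}\bigg)\bigg(\sum_{n\geq1}e^{-n(s+s_i)}\hat{T}^n 1_{Y_n\cap A_i}\bigg)\,d\mu,
\end{equation}
which carry the excursion weights $\hat{T}^n1_{Y_n\cap A_i}$. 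These cross integrals are not scalar multiples of $J$; even under the full hypotheses of Theorem \ref{Thm:SA} (with \ref{Thm:SA-cond-5}), they are only \emph{asymptotically comparable} to $J$ via the normalized densities $H_t^{(i)}$, and without \ref{Thm:SA-cond-5} one has only the one-sided comparison supplied by Lemma \ref{Lem:IT-weak}. So the object for which the machinery yields sharp asymptotics is the double Laplace transform taken against the $t$-dependent density $\tilde H_t=\sum_i\beta_i\lambda_i^\alpha H_t^{(i)}$, not against $1_Y$.

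This matters for how the argument should be organized. The paper does not reduce the problem to a pointwise bound on $\int \exp(-\lambda_n\sum_i\lambda_iS_n^{A_i})\,d\mu_{1_Y}$; instead, after Chebyshev and the monotonicity trick (which you would also need, and which replaces your vague ``Karamata Tauberian extraction'' — note the $\lambda$ parameter is itself $n$-dependent, so a textbook Tauberian theorem does not apply directly), it bounds $t^{-1}\sum_n e^{-nt^{-1}}\int R_{n,t}G\,d\mu$ by a constant times $t^{-1}\sum_n e^{-nt^{-1}}\int R_{n,t}\tilde H_t\,d\mu$ using Lemma \ref{Lem:IT-weak} (sweeping of $\tilde H_t$ for $G$, which combines your hypothesis on $G$ with \ref{Thm:SA-cond-4} and Lemma \ref{Lem:Ht}), and then evaluates the latter via Lemma \ref{Lem:decompose-SA}, which is where \ref{Thm:SA-cond-3} and \ref{Thm:SA-cond-4} are used to kill the $\mu(Y)$, first, and $A_d$ terms in the identity. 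Your intermediate target \eqref{eq:plan-target} is in fact a true statement (since $H_t^{(i)}$ and $1_Y$ are two-sidedly comparable under the boundedness hypotheses), but the route you propose to it — a nonexistent closed form for $J$ followed by Tauberian inversion — does not work as stated; you need to route through $\tilde H_t$ and Lemma \ref{Lem:IT-weak}, exactly as in the proof of Theorem \ref{Thm:Z-weak}.
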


We will give the proof of Theorem \ref{Thm:SA-weak} in Section \ref{Sec:proof-SA}.

\section{Analytical tools}\label{Sec:Ana}

In this section we prove lemmas needed in the sequel.

%For the proofs of our main results, we make use of double Laplace transforms. 
%More specifically, let $q>0$ and $\lambda(t)> 0$ $(t>0)$ with $\lambda(t)\to0$ $(t\to\infty)$. Let $\nu$ be a suitable finite measure on $X$. Let $S_n$ denote either one of  $Z_n^Y$, $S_n^Y$ or $\sum_{i=1}^{d-1}\lambda_i S_n^{A_i}$ in Theorems \ref{Thm:Z}, \ref{Thm:SY} or \ref{Thm:SA}, respectively. We write $[t]$ for the greatest integer which is less than or equal to $t\in\bR$.
%We are going to give a certain representation of  
%\begin{equation}
%	\int_0^\infty du\,e^{-qu}\int_X d\nu\, \exp(-\lambda(t) S_{[ut]})
%\end{equation}
%and make an estimate of its asymptotic behavior in the limit $t\to\infty$, which tells us an asymptotic behavior of $S_n$ in a statistical sense. 

\begin{Lem}\label{Lem:Laplace}
Let $f_n:(0,\infty)\to [0,\infty)$ $(n\in \bN\cup \{\infty\})$ be non-increasing functions. Assume there exists a non-empty open interval $I\subset(0,\infty)$ such that for any $q\in I$,
\begin{align}
 \lim_{n\to\infty}\int_0^\infty e^{-qu}f_n(u)\,du 
 =
  \int_0^\infty e^{-qu}f_\infty(u)\,du<\infty.
 \end{align}
Then $\lim_{n\to\infty}f_n(u)= f_\infty(u)$ for all continuity points $u\in (0,\infty)$ of $f_\infty$.
\end{Lem}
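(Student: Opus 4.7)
The plan is to reduce the pointwise problem to a convergence statement for the antiderivatives $F_n(u) := \int_0^u f_n(v)\,dv$, $n \in \bN \cup \{\infty\}$. Each $F_n$ is continuous, non-decreasing, and concave (the last because $F_n' = f_n$ is non-increasing), and Fubini gives
\begin{align}
    \int_0^\infty e^{-qu} F_n(u)\,du = \frac{\hat f_n(q)}{q}
    \quad\text{with}\quad
    \hat f_n(q) := \int_0^\infty e^{-qu} f_n(u)\,du,
\end{align}
so the hypothesis transfers verbatim: the Laplace transforms of the $F_n$ converge on $I$ to $\hat f_\infty(q)/q$, which is the Laplace transform of the continuous function $F_\infty(u) := \int_0^u f_\infty(v)\,dv$.

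The heart of the argument is to establish $F_n(u) \to F_\infty(u)$ for every $u > 0$. From $F_n(u) \leq e^{qu}\hat f_n(q)$ for any $q \in I$, the sequence $\{F_n(u)\}_n$ is locally uniformly bounded, so Helly's selection theorem supplies, from any subsequence, a further subsequence $\{F_{n_k}\}$ converging at all continuity points to some non-decreasing right-continuous $F^*$. To identify $F^*$ with $F_\infty$, I would fix $q \in I$, choose $q' \in I$ with $q' < q$, and use the integrable majorant
\begin{align}
    e^{-qu} F_{n_k}(u) \leq \left(\sup_k \hat f_{n_k}(q')\right) e^{-(q-q')u}
\end{align}
to pass to the limit by dominated convergence, obtaining $\int_0^\infty e^{-qu} F^*(u)\,du = \hat f_\infty(q)/q = \int_0^\infty e^{-qu} F_\infty(u)\,du$. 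Uniqueness of the Laplace transform then forces $F^* = F_\infty$ almost everywhere; since $F^*$ is right-continuous and $F_\infty$ is continuous, this equality extends to every point. Because every Helly sub-limit equals $F_\infty$, the full sequence $F_n$ converges to $F_\infty$ on $(0,\infty)$.

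To recover $f_n(u_0) \to f_\infty(u_0)$ at a continuity point $u_0$ of $f_\infty$, I would use the monotonicity of $f_n$ through the sandwich
\begin{align}
	f_n(u_0+h) \leq \frac{F_n(u_0+h) - F_n(u_0)}{h} \leq f_n(u_0) \leq \frac{F_n(u_0) - F_n(u_0 - h)}{h} \leq f_n(u_0-h),
\end{align}
valid for $0 < h < u_0$. Sending $n \to \infty$ replaces the two inner secant slopes by their $F_\infty$-counterparts, and then sending $h \to 0+$ sends those to $f_\infty(u_0+)$ and $f_\infty(u_0-)$ respectively; at a continuity point of $f_\infty$ both equal $f_\infty(u_0)$, which squeezes $f_n(u_0)$ to $f_\infty(u_0)$.

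The main obstacle is the identification step. Dominated convergence cannot be applied to $e^{-qu} f_n(u)$ directly, since $f_n$ can concentrate arbitrary mass near $u=0$; passing to $F_n$ smooths this out, because $F_n$ is uniformly controlled on compacts, and the extra factor $1/q$ in the Laplace transform, combined with the existence of a second point $q' < q$ inside $I$, converts the a priori bound on $\hat f_n$ at $q'$ into the required integrable exponential majorant. This is where the openness of $I$ is used in an essential way.
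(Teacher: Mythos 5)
Your proof is correct. The paper does not supply its own proof of this lemma, deferring instead to \cite[Lemma 3.2]{SerYan19}; your reconstruction is a valid, self-contained argument via the classical route: passing to the antiderivatives $F_n$, whose local uniform boundedness ($F_n(u)\leq e^{qu}\hat f_n(q)$) permits a Helly extraction, and then a dominated-convergence plus Laplace-uniqueness identification to obtain $F_n\to F_\infty$ pointwise, followed by the monotone secant sandwich
\begin{align}
 f_n(u_0+h)\leq \tfrac{F_n(u_0+h)-F_n(u_0)}{h}\leq f_n(u_0)\leq \tfrac{F_n(u_0)-F_n(u_0-h)}{h}\leq f_n(u_0-h)
\end{align}
to recover $f_n(u_0)\to f_\infty(u_0)$ at continuity points $u_0$ of $f_\infty$. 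You have also correctly pinpointed the only subtle issue (why dominated convergence fails for $f_n$ directly but works for $F_n$) and where the openness of $I$ enters: it provides the pair $q'<q$ needed for the integrable majorant $\hat f_{n_k}(q')\,e^{-(q-q')u}$. One small point worth making explicit when writing this up: since each $F_n$ is continuous with $F_n(0)=0$, and the Helly sub-limit $F^*$ equals the continuous $F_\infty$ a.e.\ while being monotone, it follows that $F^*$ is in fact continuous and equals $F_\infty$ everywhere, so the subsequential convergence $F_{n_k}\to F^*$ holds at all $u>0$ (not merely at continuity points of $F^*$), which is what the subsequence-of-subsequences step needs.
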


See \cite[Lemma 3.2]{SerYan19} for the proof of Lemma \ref{Lem:Laplace}.

%Let us consider a discretization of double Laplace transforms.

\begin{Lem}\label{Lem:discrete}
Fix a constant $C>0$. Let $S_n:X\to[0,Cn]$ $(n\in\bN_0)$ be measurable functions defined on a measure space $(X,\cA)$, and let $\lambda:(0,\infty)\to[0,\infty)$ be a non-negative function with $\lambda(t)\to0$ $(t\to\infty)$. 
Suppose $\nu_t$ $(t>0)$ are non-zero finite measures on $X$.
Then for any  $q>0$,
\begin{align}
	&\sum_{n=0}^\infty e^{-nq t^{-1}} \int_X \exp(-\lambda(t) S_{n})\, d\nu_t
	\\[5pt]
	&\sim
	t\int_0^\infty du\, e^{-qu}\int_X d\nu_t\, \exp(-\lambda(t) S_{[ut]})
	\quad(t\to\infty).
	\label{Lem:DL-1-1}	
\end{align}
\end{Lem}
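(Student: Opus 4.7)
The plan is to observe that this lemma is not really an asymptotic comparison, but rather an exact identity times a factor that tends to $1$. The key step is to decompose the integral on the right-hand side according to the level sets of $u\mapsto [ut]$. Writing
\begin{align}
	&t\int_0^\infty e^{-qu}\int_X \exp(-\lambda(t)S_{[ut]})\,d\nu_t\,du
	\\
	&=
	t\sum_{n=0}^\infty \bigg(\int_{n/t}^{(n+1)/t}e^{-qu}\,du\bigg)\int_X \exp(-\lambda(t)S_n)\,d\nu_t
\end{align}
and computing the elementary inner integral as $(e^{-qn/t}-e^{-q(n+1)/t})/q$, one factors out $e^{-qn/t}$ and recognizes that the right-hand side equals
\begin{align}
	\frac{t(1-e^{-q/t})}{q}\sum_{n=0}^\infty e^{-nqt^{-1}}\int_X \exp(-\lambda(t)S_n)\,d\nu_t.
\end{align}
Thus the two sides in \eqref{Lem:DL-1-1} differ only by the deterministic factor $t(1-e^{-q/t})/q$, which tends to $1$ as $t\to\infty$ by the Taylor expansion of $e^{-q/t}$.

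Before invoking this identity I would first check that both sides are finite and strictly positive, so that the asymptotic equivalence $\sim$ makes sense. Since $\lambda(t)\geq0$ the integrand is bounded above by $1$, so each term of the sum is at most $\nu_t(X)<\infty$ and the full sum is bounded by $\nu_t(X)/(1-e^{-q/t})<\infty$. Positivity follows from $S_0\in[0,0]=\{0\}$, which makes the $n=0$ term equal to $\nu_t(X)>0$ (recall that $\nu_t$ is non-zero by hypothesis). These bounds also justify the interchange of the sum and the integral over $u$ via Tonelli's theorem.

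There is no genuine obstacle here; the statement is essentially a Riemann-sum identity. The only thing to be mildly careful about is that the identity holds with no dependence on the $S_n$'s other than through integrability, so no uniformity assumption on $\lambda(t)$ or on the family $\{\nu_t\}$ is needed beyond what is stated. In particular, the hypothesis $\lambda(t)\to0$ and the bound $S_n\leq Cn$ play no role in the computation itself; they are presumably included because the lemma will be applied in the sequel to double Laplace transforms of $S_n^Y$, $S_n^{A_i}$, or $Z_n^Y$, where these extra bounds are naturally available.
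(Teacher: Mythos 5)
Your proof is correct, and it takes a genuinely different (and sharper) route than the paper's. Both proofs begin with the same Riemann-sum decomposition of the double integral over the slices $u\in[n/t,(n+1)/t)$, but they diverge at the next step. The paper bounds the difference of the two sides from above by the telescoping sum $\sum_{n}(e^{-nq/t}-e^{-(n+1)q/t})\nu_t(X)=\nu_t(X)$, then bounds the right-hand side from below by $\nu_t(X)/(qt^{-1}+C\lambda(t))$ using $0\le S_{[ut]}\le Cut$, and concludes $1\le \text{LHS}/\text{RHS}\le 1+qt^{-1}+C\lambda(t)\to 1$. This is where the paper genuinely invokes both $S_n\le Cn$ and $\lambda(t)\to 0$. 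You instead simply evaluate the elementary inner integral, obtaining the exact proportionality
\begin{align}
	t\int_0^\infty e^{-qu}\int_X \exp(-\lambda(t)S_{[ut]})\,d\nu_t\,du
	=
	\frac{t(1-e^{-q/t})}{q}\sum_{n\ge0} e^{-nqt^{-1}}\int_X \exp(-\lambda(t)S_n)\,d\nu_t,
\end{align}
so the ratio is the deterministic scalar $q/(t(1-e^{-q/t}))\to1$, with no dependence whatsoever on $\nu_t$, $\lambda$, $C$ or the $S_n$ beyond $S_0=0$ (which guarantees positivity) and integrability. Your observation that the hypotheses $\lambda(t)\to0$ and $S_n\le Cn$ are dispensable for this lemma is correct; the paper's formulation presumably carries them because they are bundled with the standing assumptions in the applications, but the statement (and your proof) show they are unused here. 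The only thing worth retaining from the paper's bookkeeping is the finiteness check, which you have addressed; alternatively it follows instantly from the exact identity once the left-hand side is shown finite, as you did.
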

\begin{proof}
Note that
\begin{align}
	\text{(the right-hand side of \eqref{Lem:DL-1-1})}
	&=
	\sum_{n=0}^\infty
	t
	\int_{nt^{-1}}^{(n+1)t^{-1}}
	du\,e^{-qu}\int_X d\nu_t\, \exp(-\lambda(t)S_{n}),
	\label{Lem:DL-1-2}
\end{align}
and hence
\begin{equation}
\begin{split}
	0
	&\leq 
	\text{(the left-hand side of \eqref{Lem:DL-1-1})}
	-
	\text{(the right-hand side of \eqref{Lem:DL-1-1})}
	\\
	&\leq
	\sum_{n=0}^\infty (e^{-nq t^{-1}}-e^{-(n+1)q t^{-1}})\nu_t(X)
	=
	\nu_t(X).
\end{split}	
\end{equation}
In addition, since $0\leq S_{[ut]}\leq Cut$, we have
\begin{align}
	\text{(the right-hand side of \eqref{Lem:DL-1-1})}
	&\geq
	\nu_t(X)t\int_0^\infty \exp(-(q+C\lambda(t)t) u)\,du
	\\
	&=
	\frac{\nu_t(X)}{qt^{-1}+C\lambda(t)}.
\end{align}
Therefore we obtain
\begin{align}
	1
	\leq
	\frac{\text{(the left-hand side of \eqref{Lem:DL-1-1})}}
	     {\text{(the right-hand side of \eqref{Lem:DL-1-1})}}
	\leq
	1+qt^{-1}+C\lambda(t)
	\to
	1
	\quad
	(t\to\infty).
\end{align}
We now complete the proof.
\end{proof}

The following three lemmas are slight extensions of \cite[Lemma 4.2]{ThZw}. 

\begin{Lem}\label{Lem:IT-weak}
Fix a constant $t_0>0$. Suppose that the following conditions \ref{Lem:IT-1},  \ref{Lem:IT-2},  \ref{Lem:IT-3} are fulfilled:
\begin{enumerate}[label=\textup{(\roman*)}]
	\item\label{Lem:IT-1} $T$ is a CEMPT on a $\sigma$-finite measure space $(X, \cA, \mu)$. 
	
\item\label{Lem:IT-2} $\{H_t\}_{t>0}\cup\{G\}\subset \{u\in L^1(\mu):u\geq0\}$. In addition,  $\{H_t\}_{t\geq t_0}$ is uniformly sweeping in $K$ steps for $G$.
\item\label{Lem:IT-3} $R_{n,t}:X\to(0,1]$ $(n\in \bN_0,\;t>0)$  are measurable functions with
\begin{equation}
	\sup\bigg\{\bigg\|\frac{R_{n,t}\circ T^k}{R_{n+k,t}}\bigg\|_{L^\infty(\mu)}\::\:n,k\in \bN_0,\;0\leq k\leq K,\; t\geq t_0\bigg\}<\infty.
\end{equation}
\end{enumerate}
Then, for any $q>0$, we have
\begin{align}
   \sup_{t\geq t_0}
   \frac
   {
   \sum_{n\geq0} e^{-nqt^{-1}}
   \int_X 
    R_{n,t}G\,d\mu
	}
	{
	\sum_{n\geq0} e^{-nqt^{-1}}
	\int_X
	R_{n,t}
	H_t \,d\mu}
	<\infty.
\end{align}
\end{Lem}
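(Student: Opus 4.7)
The plan is to combine the uniform-sweeping bound with the duality identity for the transfer operator and the comparison between $R_{n,t}\circ T^k$ and $R_{n+k,t}$. By hypothesis \ref{Lem:IT-2}, there exists a constant $C>0$ such that, for every $t\geq t_0$,
\begin{equation}
   G \leq C \sum_{k=0}^{K} \hat{T}^k H_t \quad \text{a.e.}
\end{equation}
Multiplying by $R_{n,t}$ and integrating, then invoking the defining relation $\int v\,\hat{T}u\,d\mu = \int (v\circ T) u\,d\mu$ for $\hat{T}$, I would obtain
\begin{equation}
   \int_X R_{n,t} G\, d\mu \leq C \sum_{k=0}^{K} \int_X (R_{n,t}\circ T^k)\, H_t\, d\mu.
\end{equation}

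Next, I would use condition \ref{Lem:IT-3}: setting $M:=\sup\{\|R_{n,t}\circ T^k/R_{n+k,t}\|_{L^\infty(\mu)}\}<\infty$, where the sup is over the prescribed range, one gets $R_{n,t}\circ T^k \leq M\, R_{n+k,t}$ a.e., and hence
\begin{equation}
   \int_X R_{n,t}G\, d\mu \leq CM \sum_{k=0}^{K} \int_X R_{n+k,t}\, H_t\, d\mu.
\end{equation}
Multiplying by $e^{-nqt^{-1}}$, summing over $n\geq 0$, and swapping the two sums gives
\begin{equation}
   \sum_{n\geq 0} e^{-nqt^{-1}}\!\int_X R_{n,t} G\, d\mu
   \leq CM \sum_{k=0}^{K} \sum_{n\geq 0} e^{-nqt^{-1}}\!\int_X R_{n+k,t}\, H_t\, d\mu.
\end{equation}
For each fixed $k$, the inner sum is reindexed with $m=n+k$ and bounded by $e^{kqt^{-1}}\sum_{m\geq 0}e^{-mqt^{-1}}\!\int_X R_{m,t}H_t\,d\mu$.

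The only remaining point is to control the factor $e^{kqt^{-1}}$ uniformly in $t\geq t_0$: since $0\leq k\leq K$, we have $e^{kqt^{-1}}\leq e^{Kq/t_0}$, so $\sum_{k=0}^K e^{kqt^{-1}} \leq (K+1)e^{Kq/t_0}$. Combining everything yields
\begin{equation}
   \frac{\sum_{n\geq 0} e^{-nqt^{-1}}\!\int R_{n,t}G\,d\mu}{\sum_{n\geq 0} e^{-nqt^{-1}}\!\int R_{n,t}H_t\,d\mu} \leq CM(K+1)e^{Kq/t_0},
\end{equation}
uniformly for $t\geq t_0$, which is the desired conclusion. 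There is no genuine obstacle here; the argument is essentially bookkeeping, with the one subtlety being that the lower bound $t\geq t_0$ is exactly what is needed to absorb the index shift into a $t$-independent constant — this is the role of hypothesis \ref{Lem:IT-2} applying only for $t\geq t_0$ rather than for all $t>0$.
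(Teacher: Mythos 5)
Your proof is correct and follows essentially the same route as the paper's: apply the uniform sweeping bound, use the duality $\int v\,\hat{T}^k u\,d\mu=\int (v\circ T^k)u\,d\mu$, replace $R_{n,t}\circ T^k$ by $M R_{n+k,t}$ via condition (iii), then sum over $n$ and absorb the index shift into the $t$-independent factor $e^{kq/t_0}$ for $0\leq k\leq K$. The only cosmetic difference is that the paper merges your two constants $C$ and $M$ into a single $C$, and writes the final bound as $C^2\sum_{k=0}^K e^{kq/t_0}$ rather than your slightly cruder $CM(K+1)e^{Kq/t_0}$.
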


\begin{proof}
Take $C>0$ large enough so that, for any $n,k\in\bN_0$ with $0\leq k\leq K$ and for any $t\geq t_0$, we have $G\leq C\sum_{k=0}^K \hat{T}^k H_t$ and $R_{n,t}\circ T^k\leq CR_{n+k,t}$ a.e.
Then
\begin{align}
    \int_X R_{n,t}G\,d\mu
    &\leq
    C\int_X R_{n,t}\bigg(\sum_{k=0}^K \hat{T}^k H_t\bigg)\,d\mu
    =
    C\sum_{k=0}^K \int_X (R_{n,t}\circ T^k)H_t \,d\mu
    \\
    &\leq
    C^2	\sum_{k=0}^K \int_X R_{n+k,t}H_t\,d\mu
    \quad(n\in\bN_0,\;t\geq t_0)
    ,
\end{align}
which implies
\begin{align}
   \sum_{n\geq0} e^{-nqt^{-1}}
   \int_X 
    R_{n,t}G\,d\mu
    \leq
    \bigg(
    C^2\sum_{k=0}^{K}e^{kqt_0^{-1}}
    \bigg)
    \bigg(
    \sum_{n\geq0} e^{-nqt^{-1}}
   \int_X 
    R_{n,t}H_t\,d\mu\bigg)
    \quad(t\geq t_0).
\end{align}
We now complete the proof.
\end{proof}

\begin{Lem}[Integrating transforms]\label{Lem:IT}
Under the assumptions of Lemma \textup{\ref{Lem:IT-weak}} with $G=1_Y$, $Y\in \cA$, $\mu(Y)\in(0,\infty)$, we further suppose 
\begin{enumerate}[label=\textup{(\roman*)}]
\setcounter{enumi}{3}
\item\label{Lem:IT-4} $\{H_t\}_{t>0}\cup\{H\}\subset \{u\in L^\infty(\mu):u\geq0\; \text{and $u$ is supported on $Y$}\}$ and $H_t\to H$ in $L^\infty(\mu)$ $(t\to\infty)$.
\end{enumerate}  
Then, for any $q>0$,
\begin{align}\label{eq:Lem:IT}
	&\sum_{n\geq0}e^{-nqt^{-1}}
	\int_Y
	R_{n,t}H_t
	\,d\mu
    \sim
	\sum_{n\geq0}
	e^{-nqt^{-1}}
	\int_Y R_{n,t}H\,d\mu
	\quad(t\to\infty).
\end{align}
\end{Lem}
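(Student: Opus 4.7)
The plan is a direct reduction of the difference between the two sums to an error governed by $\norm{H_t - H}_{L^\infty(\mu)}$, multiplied by an auxiliary sum that Lemma~\ref{Lem:IT-weak} already controls.

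First I would estimate the difference termwise. Since both $H_t$ and $H$ are supported on $Y$ and $R_{n,t}$ takes values in $(0,1]$, for each $n \in \bN_0$ and $t \geq t_0$,
\begin{equation}
\absol{\int_Y R_{n,t}(H_t - H)\,d\mu} \leq \norm{H_t - H}_{L^\infty(\mu)} \int_X R_{n,t} \cdot 1_Y\,d\mu.
\end{equation}
Summing against the geometric weights gives
\begin{equation}
\absol{\sum_{n\geq 0} e^{-nqt^{-1}} \int_Y R_{n,t}(H_t - H)\,d\mu} \leq \norm{H_t - H}_{L^\infty(\mu)} \sum_{n\geq 0} e^{-nqt^{-1}} \int_X R_{n,t} \cdot 1_Y\,d\mu.
\end{equation}

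Next I would invoke Lemma~\ref{Lem:IT-weak} with $G = 1_Y$, whose uniform-sweeping hypothesis on $\{H_t\}_{t\geq t_0}$ is exactly what is assumed here. That lemma provides a constant $C_* > 0$ independent of $t \geq t_0$ such that
\begin{equation}
\sum_{n\geq 0} e^{-nqt^{-1}} \int_X R_{n,t} \cdot 1_Y\,d\mu \leq C_* \sum_{n\geq 0} e^{-nqt^{-1}} \int_X R_{n,t} H_t\,d\mu = C_* \sum_{n\geq 0} e^{-nqt^{-1}} \int_Y R_{n,t} H_t\,d\mu,
\end{equation}
the last equality because $H_t$ is supported on $Y$.

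Dividing the preceding estimate by $\sum_{n\geq 0} e^{-nqt^{-1}}\int_Y R_{n,t} H_t\,d\mu$ gives
\begin{equation}
\absol{1 - \frac{\sum_{n\geq 0} e^{-nqt^{-1}} \int_Y R_{n,t} H\,d\mu}{\sum_{n\geq 0} e^{-nqt^{-1}} \int_Y R_{n,t} H_t\,d\mu}} \leq C_* \norm{H_t - H}_{L^\infty(\mu)} \to 0 \quad (t \to \infty),
\end{equation}
which is the asymptotic equivalence \eqref{eq:Lem:IT}. The only subtle point is verifying that the hypotheses of Lemma~\ref{Lem:IT-weak} transfer cleanly, but this is automatic: the uniform sweeping of $\{H_t\}$ for $1_Y$ and the bound on $R_{n,t}\circ T^k / R_{n+k,t}$ are already assumed, so no additional work is required. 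I do not foresee any serious obstacle; the proof is essentially a one-line $L^\infty$ estimate combined with the previous lemma.
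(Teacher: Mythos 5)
Your proof is correct and follows essentially the same route as the paper: bound the difference termwise by $\|H-H_t\|_{L^\infty(\mu)}\int_Y R_{n,t}\,d\mu$ using that both densities are supported on $Y$, sum, control the auxiliary sum $\sum_n e^{-nqt^{-1}}\int_Y R_{n,t}\,d\mu$ by Lemma~\ref{Lem:IT-weak} with $G=1_Y$, and let $\|H-H_t\|_{L^\infty(\mu)}\to 0$. The paper packages this into a single displayed inequality for the relative error $\bigl|1 - \tfrac{\sum e^{-nqt^{-1}}\int_Y R_{n,t}H\,d\mu}{\sum e^{-nqt^{-1}}\int_Y R_{n,t}H_t\,d\mu}\bigr|$, but the content is identical to your step-by-step version.
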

\begin{proof}
By Lemma \ref{Lem:IT-weak},
\begin{align}
   \bigg|
   \frac
   {\sum_{n\geq0}e^{-nqt^{-1}}
	\int_Y
	R_{n,t}H
	\,d\mu}
   {\sum_{n\geq0}e^{-nqt^{-1}}
	\int_Y
	R_{n,t}H_t
	\,d\mu}
   -1\bigg|
   &\leq
   \frac
   {
   \sum_{n\geq0} e^{-nqt^{-1}}
   \int_Y 
    R_{n,t}\,d\mu
	}
	{
	\sum_{n\geq0} e^{-nqt^{-1}}
	\int_Y
	R_{n,t}H_t
	\,d\mu}
	\|H-H_t\|_{L^\infty(\mu)}
	\\
	&\to0
	\quad(t\to\infty),	
\end{align}
as desired.
\end{proof}

\begin{Lem}\label{Lem:Ht}
Let $T$ be a CEMPT on a $\sigma$-finite space $(X,\cA,\mu)$ with $\mu(X)=\infty$, $Y\in\cA$ and $\mu(Y)\in(0,\infty)$. Let $\{v_n\}_{n\geq0}\subset\{u\in L^\infty(\mu)\::\:u\geq0\;\text{and $u$ is supported on $Y$}\}$ with
\begin{align}
    0<\sum_{n\geq0}e^{-ns}\int_Y v_n\,d\mu
    <\infty
    \quad(s>0).	
\end{align}
Let $\lambda:(0,\infty)\to(0,\infty)$ be a positive function with $\lambda(t)\to0$ $(t\to\infty)$.
We define $H_t\in L^1(\mu)$ as
\begin{align}
	H_t
	=
	\frac
	{\sum_{n\geq0}e^{-n\lambda(t)} v_n }
	{\sum_{n\geq0}e^{-n\lambda(t)}\int_Y v_n\,d\mu}
	\,\bigg(
	=
	\frac
	{\sum_{n\geq0}e^{-n\lambda(t)}\sum_{k=0}^n  v_k}
	{\sum_{n\geq0}e^{-n\lambda(t)}\sum_{k=0}^n \int_Y v_k\,d\mu}
	\bigg)
	\quad
	(t>0).
\end{align}
Then the following assertions hold:	
\begin{enumerate}[label=\textup{(\arabic*)}]
\item\label{Lem:Ht-1} 
Assume there exists $N\in \bN_0$ such that $\sum_{k=0}^N \int_Y v_k\,d\mu>0$ and 
\begin{align}
	\bigg\{\frac{\sum_{k=0}^n v_k}{\sum_{k=0}^n \int_Y v_k\,d\mu}\bigg\}_{n\geq N}
\end{align}
is uniformly sweeping in $K$ steps for $1_Y$. Then there exists $t_0>0$ such that $\{H_t\}_{t\geq t_0}$ is uniformly sweeping in $K$ steps for $1_Y$.

\item\label{Lem:Ht-2} 
Assume there exists $H\in L^\infty(\mu)$ such that
\begin{align}
    \frac{\sum_{k=0}^n v_k}{\sum_{k=0}^n \int_Y v_k\,d\mu}
    \to H
    \quad
    \text{in $L^\infty(\mu)$}\;\;(n\to\infty).	
\end{align}
Then $\{H_t\}_{t>0}\subset L^\infty(\mu)$ and $H_t\to H$ in $L^\infty(\mu)$ $(t\to\infty)$.
\end{enumerate}
\end{Lem}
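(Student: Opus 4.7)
My plan is to base everything on the Abel summation identity
\[
\sum_{n\geq 0} e^{-n\lambda} v_n \;=\; (1-e^{-\lambda})\sum_{n\geq 0} e^{-n\lambda} S_n,\qquad S_n:=\sum_{k=0}^n v_k,
\]
and the analogous identity for the scalar partial sums $s_n:=\sum_{k=0}^n \int_Y v_k\,d\mu$. This simultaneously verifies the parenthetical rewriting in the statement and lets me work in the form
\[
H_t \;=\; \frac{\sum_{n\geq 0} e^{-n\lambda(t)} S_n}{\sum_{n\geq 0} e^{-n\lambda(t)} s_n},
\]
in which, for those $n$ with $s_n>0$, the quotient $S_n/s_n$ is exactly the function $U_n:=\sum_{k=0}^n v_k/\sum_{k=0}^n \int_Y v_k\,d\mu$ whose behaviour is prescribed.

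For \textbf{(1)}, I will multiply the sweeping inequality $C\sum_{j=0}^K \hat T^j U_n\geq 1_Y$ by $s_n\geq 0$ to deduce $C\sum_{j=0}^K \hat T^j S_n\geq s_n\cdot 1_Y$ a.e.\ for $n\geq N$; multiplying by $e^{-n\lambda(t)}$, summing over $n\geq N$, and using $S_n\geq 0$ to enlarge the sum on the left to all $n\geq 0$, I obtain
\[
C\sum_{j=0}^K \hat T^j \sum_{n\geq 0}e^{-n\lambda(t)} S_n \;\geq\; \Bigl(\sum_{n\geq N}e^{-n\lambda(t)} s_n\Bigr)\,1_Y\quad\text{a.e.}
\]
Dividing by $\sum_{n\geq 0} e^{-n\lambda(t)} s_n$ gives $C\sum_{j=0}^K \hat T^j H_t\geq \theta(t)\,1_Y$ with $\theta(t):=\sum_{n\geq N}e^{-n\lambda(t)} s_n/\sum_{n\geq 0} e^{-n\lambda(t)} s_n$. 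The hypothesis $\sum_{k=0}^N \int_Y v_k\,d\mu>0$ forces $s_n\geq s_N>0$ for $n\geq N$, so the denominator tends to $+\infty$ as $\lambda(t)\to 0$, whereas the missing piece $\sum_{n=0}^{N-1}e^{-n\lambda(t)} s_n\leq \sum_{n=0}^{N-1} s_n$ stays bounded; hence $\theta(t)\to 1$. Choosing $t_0$ so that $\theta(t)\geq 1/2$ for $t\geq t_0$ upgrades the inequality to $2C\sum_{j=0}^K \hat T^j H_t\geq 1_Y$, which is the desired uniform sweeping.

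For \textbf{(2)}, I will write
\[
H_t-H \;=\; \frac{\sum_{n=0}^{N-1} e^{-n\lambda(t)}(S_n-H s_n) + \sum_{n\geq N} e^{-n\lambda(t)} s_n(U_n-H)}{\sum_{n\geq 0} e^{-n\lambda(t)} s_n}
\]
(where $N$ is any index past which $U_n$ is defined) and control the two pieces of the numerator separately. The first sum has $L^\infty$-norm uniformly bounded in $t$, while the denominator diverges (as in (1), the mere existence of $U_n$ forces $s_n$ to be eventually positive, so $\sum_{n\geq 0} e^{-n\lambda(t)} s_n\to\infty$), and its contribution is $o(1)$. For the tail, given $\varepsilon>0$ I will pick $M\geq N$ with $\|U_n-H\|_\infty<\varepsilon$ for $n\geq M$ and split the tail at $M$: the pre-$M$ piece is again absorbed by the divergent denominator, while the post-$M$ piece is dominated in $L^\infty$-norm by $\varepsilon\sum_{n\geq 0} e^{-n\lambda(t)}s_n$ and thus contributes at most $\varepsilon$. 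Sending $\varepsilon\downarrow 0$ gives $H_t\to H$ in $L^\infty(\mu)$; running the same estimate with $\sup_n\|U_n\|_\infty<\infty$ in place of $\varepsilon$ also shows $H_t\in L^\infty(\mu)$. The only point requiring a little care is the divergence $\sum_{n\geq 0}e^{-n\lambda(t)}s_n\to\infty$, which follows from the positivity of $s_n$ for large $n$ together with $\lambda(t)\to 0$; once this is in hand, both parts reduce to routine Abel-summation and dominated-average bookkeeping.
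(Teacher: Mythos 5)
Your proof is correct and follows essentially the same strategy as the paper: rewrite $H_t$ via Abel summation as a weighted average of the $U_n := (\sum_{k\leq n}v_k)/(\sum_{k\leq n}\int_Y v_k\,d\mu)$, then show the contribution of the initial segment $n<N$ is washed out because the denominator $\sum_{n\geq 0}e^{-n\lambda(t)}\sum_{k\leq n}\int_Y v_k\,d\mu$ diverges as $t\to\infty$. The only cosmetic differences are that in part (1) you simply drop the $n<N$ terms in the numerator (exploiting $S_n\geq 0$) and track a factor $\theta(t)\to1$, whereas the paper bounds those terms by a fixed $L^\infty$ norm and pushes the error below $1/2$; and in part (2) you use a two-stage split at $N$ then $M$, whereas the paper rolls both roles into a single $N$. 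Neither difference is substantive.
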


\begin{proof}
\ref{Lem:Ht-1}
Take $C>0$ large enough so that, for any $n\geq N$, 
\begin{align}
C \sum_{k=0}^n \sum_{m=0}^K \hat{T}^m v_k\geq \sum_{k=0}^n \int_Y v_k\,d\mu
\quad
\text{a.e.\, on $Y$.}
\end{align}
Then
\begin{align}
   &C\sum_{m=0}^K \hat{T}^m H_t-1
   \\
   &\geq
   \frac{-\sum_{n=0}^{N-1}\|\sum_{k=0}^n  (\sum_{m=0}^K \hat{T}^m v_k-\int_Y v_k\,d\mu)\|_{L^\infty(\mu)}}{\sum_{n\geq0}e^{-n\lambda(t)}\sum_{k=0}^n \int_Y v_k\,d\mu}
   \quad
   \text{a.e.\, on $Y$.}
   \label{eq:Lem:Ht-1}	
\end{align}
Since $\sum_{n\geq0}e^{-n\lambda(t)}\sum_{k=0}^n \int_Y v_k\,d\mu\to \infty$ $(t\to\infty)$, we can take $t_0>0$ large enough so that, for any $t\geq t_0$, the right-hand side of \eqref{eq:Lem:Ht-1} is greater than $-1/2$. Thus, for any $t\geq t_0$, we have $2C\sum_{m=0}^K \hat{T}^m H_t\geq 1_Y$ a.e., and hence $\{H_t\}_{t\geq t_0}$ is uniformly sweeping in $K$ steps for $1_Y$.

\ref{Lem:Ht-2}
Fix $\varepsilon>0$ arbitrarily. Take $N\in\bN_0$ large enough so that $\sum_{k=0}^N \int_Y v_k\,d\mu>0$ and for any $n\geq N$,
\begin{equation}
    \bigg\|H-\frac{\sum_{k=0}^n v_k}{\sum_{k=0}^n \int_Y v_k\,d\mu}\bigg\|_{L^\infty(\mu)}\leq \varepsilon.
\end{equation}
Then
\begin{align}
   \| H-H_t\|_{L^\infty(\mu)}
   \leq
   \frac{\sum_{n=0}^{N-1}\|\sum_{k=0}^n (H\int_Y v_k\, d\mu-v_k)\|_{L^\infty(\mu)}}
   {\sum_{n\geq0}e^{-n\lambda(t)}\sum_{k=0}^n \int_Y v_k\,d\mu}+\varepsilon\,(<\infty),	
\end{align}
which implies $H_t\in L^\infty(\mu)$ and $\limsup_{t\to\infty}\|H-H_t\|_{L^\infty(\mu)}\leq \varepsilon$. Since $\varepsilon>0$ was arbitrary, we obtain the desired result.
\end{proof}

The following lemma ensures that the condition \ref{Lem:IT-3} in Lemma \ref{Lem:IT-weak} is satisfied for $R_{n,t}=\exp(-\lambda(t) Z^Y_n)$ under the setting of Theorem $\ref{Thm:Z-weak}$. 

\begin{Lem}\label{Lem:bdd-Z}
Suppose that the condition \ref{Thm:Z-cond-1} of Theorem $\ref{Thm:Z}$ holds.
Let $\lambda:(0,\infty)\to[0,\infty)$ be a non-negative function with $\lambda(t)\to0$ $(t\to\infty)$. Set
\begin{equation}
    R_{n,t}=\exp(-\lambda(t)Z^Y_{n})
    \quad(n\in\bN_0,\;t>0).	
\end{equation}
Then there exists a positive constant $t_0>0$ such that for any $n,k\in\bN_0$ and $t\geq t_0$, we have
\begin{equation}
	R_{n,t}\circ T^k \leq e^{k} R_{n+k,t}. 
\end{equation}
\end{Lem}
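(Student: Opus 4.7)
The plan is to reduce the bound to the elementary inequality $Z_{n+k}^Y - \max\{0, Z_{n+k}^Y - k\} \leq k$, which follows directly from the definition of $Z_n^Y$.

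First I would record the identity
\begin{equation}
    Z_n^Y \circ T^k = \max\{0,\, Z_{n+k}^Y - k\}
    \quad (n, k \in \bN_0),
\end{equation}
which was already used in the proof of Corollary \ref{Cor1:Z}: the orbit $\{T^{k+j} x\}_{0 \leq j \leq n}$ is the tail of $\{T^j x\}_{0 \leq j \leq n+k}$, so the last hit of $Y$ among the former (or $0$ if none) is obtained from the last hit among the latter by subtracting $k$ and truncating at $0$.

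Next I would take logarithms in the desired inequality $R_{n,t} \circ T^k \leq e^k R_{n+k,t}$, which rewrites as
\begin{equation}
    \lambda(t) \bigl( Z_{n+k}^Y - \max\{0, Z_{n+k}^Y - k\} \bigr) \leq k.
\end{equation}
The difference on the left equals $k$ when $Z_{n+k}^Y \geq k$ and equals $Z_{n+k}^Y < k$ otherwise, hence is always in $[0, k]$. Thus it suffices to choose $t_0 > 0$ with $\lambda(t) \leq 1$ for all $t \geq t_0$, which is possible by the hypothesis $\lambda(t) \to 0$ $(t \to \infty)$.

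There is no real obstacle here; the content of the lemma is essentially just the observation that $Z_n^Y$ can change by at most $k$ under $k$ iterations of $T$, combined with eventual smallness of $\lambda(t)$. The only thing to be careful about is handling the case $Z_{n+k}^Y < k$ cleanly, which is why the identity is stated with the $\max\{0, \cdot\}$ truncation rather than a naive subtraction.
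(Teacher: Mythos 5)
Your proof is correct and follows essentially the same route as the paper: the paper uses the one-sided bound $Z^Y_n\circ T^k\geq Z^Y_{n+k}-k$ together with $\lambda(t)\leq 1$ for $t\geq t_0$, whereas you record the exact identity $Z^Y_n\circ T^k=\max\{0,Z^Y_{n+k}-k\}$ (from which the paper's inequality is immediate). The extra precision of the identity is not needed here, but the argument is the same in substance.
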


\begin{proof}
 Take $t_0>0$ so large that $\lambda(t)\leq1$ for any $t\geq t_0$.	
Since $Z^Y_n\circ T^k\geq Z^Y_{n+k}-k$, we have $R_{n,t}\circ T^k \leq \exp(\lambda(t)k) R_{n+k,t}\leq e^k R_{n+k,t}$ for any $t\geq t_0$.
\end{proof}

We can also prove the following lemma in almost the same way, so we omit its proof.

\begin{Lem}\label{Lem:bdd-SA}
Suppose that the condition \ref{Thm:SA-cond-1} of Theorem $\ref{Thm:SA}$ holds.
Let $\lambda_i:(0,\infty)\to[0,\infty)$ $(i=1,\dots,d)$ be non-negative functions with $\lambda_i(t)\to0$ $(t\to\infty,\;i=1,\dots,d)$.
Set
\begin{align}
  R_{n,t}
  =
  \exp\bigg(-\sum_{i=1}^d \lambda_i(t)S_n^{A_i}\bigg)
  \quad(n\in\bN_0,\;t>0).	
\end{align}
Then there exists a positive constant $t_0>0$ such that for any $n,k\in\bN_0$ and $t\geq t_0$, we have
\begin{align}
	R_{n,t}\circ T^k \leq e^{k} R_{n+k,t}.
\end{align}
	
\end{Lem}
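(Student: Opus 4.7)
The plan is to mimic the argument of Lemma \ref{Lem:bdd-Z}, with the only extra ingredient being the disjointness of $A_1,\dots,A_d$ from the condition \ref{Thm:SA-cond-1}. The starting point is the cocycle identity
\begin{equation}
    S_n^{A_i}\circ T^k(x)
    =\sum_{j=1}^n 1_{A_i}(T^{j+k}x)
    =S_{n+k}^{A_i}(x)-S_k^{A_i}(x)
    \quad (n,k\in\bN_0,\;i=1,\dots,d),
\end{equation}
which rearranges $R_{n,t}\circ T^k$ into a product
\begin{equation}
    R_{n,t}\circ T^k
    =
    R_{n+k,t}\cdot
    \exp\!\bigg(\sum_{i=1}^d \lambda_i(t) S_k^{A_i}\bigg).
\end{equation}

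The second step is to bound the exponential factor. Because $A_1,\dots,A_d$ are pairwise disjoint, for every $x$ and every $j$ at most one $1_{A_i}(T^j x)$ equals $1$, so
\begin{equation}
    \sum_{i=1}^d S_k^{A_i}(x)
    =
    \sum_{j=1}^k 1_{\bigcup_{i=1}^d A_i}(T^j x)
    \leq k.
\end{equation}
Now I pick $t_0>0$ large enough that $\max_{1\leq i\leq d}\lambda_i(t)\leq 1$ for every $t\geq t_0$; this is possible because each $\lambda_i(t)\to 0$. Then for all such $t$,
\begin{equation}
    \sum_{i=1}^d \lambda_i(t) S_k^{A_i}(x)
    \leq \max_{i}\lambda_i(t)\cdot \sum_{i=1}^d S_k^{A_i}(x)
    \leq k,
\end{equation}
so $R_{n,t}\circ T^k\leq e^k R_{n+k,t}$, as claimed.

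There is no genuine obstacle here; the argument is a two-line computation once the disjointness of the $A_i$'s is invoked to replace the single-set bound $S_k^Y\leq k$ used in Lemma \ref{Lem:bdd-Z} by the analogous bound on the sum of the $S_k^{A_i}$'s. The only thing to be slightly careful about is that one must not try to bound each $\lambda_i(t) S_k^{A_i}$ separately by $\lambda_i(t) k$ and then sum (which would require $\sum_i \lambda_i(t)\leq 1$, a stronger condition); using $\sum_i S_k^{A_i}\leq k$ first and pulling out the maximum of $\lambda_i(t)$ is what makes the constant come out to exactly $e^k$ under the minimal assumption $\lambda_i(t)\to 0$.
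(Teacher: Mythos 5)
Your proof is correct and is exactly the ``almost the same way'' the paper has in mind when it omits the argument, namely the cocycle identity $S_n^{A_i}\circ T^k=S_{n+k}^{A_i}-S_k^{A_i}$ together with a bound on the exponential factor, in parallel with Lemma \ref{Lem:bdd-Z}. Your closing caveat is unnecessary, though: since $d$ is finite and each $\lambda_i(t)\to 0$, one also has $\sum_{i=1}^d\lambda_i(t)\leq 1$ for $t$ large, so the cruder bound $S_k^{A_i}\leq k$ for each $i$ separately would yield $e^k R_{n+k,t}$ just as well, without invoking disjointness of the $A_i$'s.
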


\section{Proofs of Theorems \ref{Thm:Z} and \ref{Thm:Z-weak}}
\label{Sec:proof-Z}

In the following lemma, we give a representation of double Laplace transform of $Z^Y_n$ in terms of $Q^Y(s)$.  A similar formula can be found in  \cite[Lemma 7.1]{ThZw}.

\begin{Lem}\label{Lem:decompose-Z}
Suppose that the condition \ref{Thm:Z-cond-1}  of Theorem $\ref{Thm:Z}$ is satisfied. Let $s_1>0$ and $s_2\geq0$.
 Then we have
\begin{align}
   &\int_Y 
   \bigg(\sum_{n\geq0} e^{-ns_1}\exp(-s_2 Z^Y_n)
   \bigg)
   \bigg(\sum_{n\geq0} e^{-n(s_1+s_2)}
   \hat{T}^n 1_{Y_n}
   \bigg)\,d\mu
   =
   \frac{Q^Y (s_1)}{1-e^{-(s_1+s_2)}}.
\label{Lem:decompose-Z-0}    	
\end{align}
\end{Lem}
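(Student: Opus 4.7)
My plan is to expand both factors, apply the duality for $\hat T$, decompose the orbit based on the last-visit-time to $Y$, and then recognize $Q^Y(s_1)$ via a geometric-sum manipulation. All integrands are nonnegative, so Fubini--Tonelli justifies every exchange of summation and integration.

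\emph{Step 1 (duality and reduction to $Y_n$).} Using $\int w\cdot\hat T^n u\,d\mu=\int (w\circ T^n)u\,d\mu$ and the fact that $T^n x\in Y$ on $Y_n$, the left-hand side rewrites as $\sum_{n\ge0}e^{-n(s_1+s_2)}\int_{Y_n}v(T^n x)\,d\mu(x)$, where $v:=\sum_{m\ge0}e^{-ms_1}\exp(-s_2Z_m^Y)$.

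\emph{Step 2 (decomposing $Z_m^Y$ on $Y$).} For $y\in Y$, the event $\{Z_m^Y(y)=k\}$ with $0\le k\le m$ coincides with $\{T^ky\in Y,\ \varphi(T^ky)>m-k\}$, since the $\varphi$-constraint says exactly that $T^{k+1}y,\dots,T^my\notin Y$. Splitting $v(y)$ by this value and summing a geometric series in $\ell=m-k$ (with the convention $e^{-\infty}=0$ on the null set $\{\varphi=\infty\}$ in $Y$) gives
\begin{equation*}
v(y)=\sum_{k\ge0}\frac{e^{-(s_1+s_2)k}}{1-e^{-s_1}}\,1_Y(T^ky)\bigl(1-e^{-\varphi(T^ky)s_1}\bigr),\qquad y\in Y.
\end{equation*}

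\emph{Step 3 ($T$-invariance and identification of $Q^Y$).} Substituting $y=T^n x$, setting $j=n+k$, and swapping the order of summation, the inner sum $\sum_{n=0}^{j}\int_{Y_n}\cdots\,d\mu$ extends to an integral over $X$: the integrand vanishes on $\{T^j x\notin Y\}$, while $\{T^j x\in Y\}\subset\bigcup_{n=0}^{j}Y_n$ modulo null sets. By $T$-invariance, for each $j$,
\begin{equation*}
\int_X 1_Y(T^j x)\bigl(1-e^{-\varphi(T^j x)s_1}\bigr)\,d\mu(x)=\int_Y\bigl(1-e^{-\varphi s_1}\bigr)\,d\mu,
\end{equation*}
so summing the geometric series in $j$ produces the factor $1/(1-e^{-(s_1+s_2)})$. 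Writing $1-e^{-\varphi s_1}=(1-e^{-s_1})\sum_{n=0}^{\varphi-1}e^{-ns_1}$ and exchanging sum and integral identifies the remaining quantity as $\sum_{n\ge0}e^{-ns_1}\mu(Y\cap\{\varphi>n\})=Q^Y(s_1)$, completing the proof.

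The main bookkeeping obstacle is Step 3, where one must be careful that the three index shifts (over $n,m,k$ and then over $j,n$) and the replacement of $\int_{\bigcup_{n=0}^{j}Y_n}$ by $\int_X$ all go through. Once it is noticed that the factor $1_Y(T^jx)$ already enforces the support condition, everything else is a routine geometric-series manipulation.
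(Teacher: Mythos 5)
Your proof is correct, and it takes a genuinely different route from the paper's. The paper decomposes by \emph{first return time}: it uses the recursion $Z_n^Y = Z_{n-k}^Y\circ T^k + k$ on $\{\varphi=k\}$ (and $Z_n^Y=0$ on $\{\varphi>n\}$), sums in $n$ to obtain a renewal-type identity, and then invokes the pointwise identity $1_Y-\sum_{k\ge1}e^{-ks}\hat T^k1_{Y\cap\{\varphi=k\}}=(1-e^{-s})\sum_{n\ge0}e^{-ns}\hat T^n1_{Y_n}$ (the paper's \eqref{eq2:Lem:decompose-Z}, quoted from Thaler--Zweim\"uller) to close the equation. You instead decompose by \emph{last exit}: for $y\in Y$ you split $\exp(-s_2Z_m^Y(y))$ according to the value $Z_m^Y(y)=k$, which on $Y$ is exactly the event $\{T^ky\in Y,\ \varphi(T^ky)>m-k\}$, and a geometric sum immediately collapses $v$ to a clean expression in $\varphi$. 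After the duality step and the index shift $j=n+k$, the observation that $\{T^jx\in Y\}\subset\bigcup_{n=0}^jY_n$ lets you replace $\int_{\bigcup_{n\le j}Y_n}$ by $\int_X$, and $T$-invariance plus the standard rewriting $1-e^{-\varphi s_1}=(1-e^{-s_1})\sum_{n<\varphi}e^{-ns_1}$ recover $Q^Y(s_1)$. The net effect is that you compute everything from the definitions without needing the ancillary operator identity \eqref{eq2:Lem:decompose-Z}; the paper's route, by contrast, keeps the renewal structure explicit, which is the form it reuses in the proofs of Lemmas \ref{Lem:decompose-SY} and \ref{Lem:decompose-SA}. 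One small point worth stating explicitly (you gesture at it with the $e^{-\infty}=0$ convention): conservativity, assumed in \ref{Thm:Z-cond-1}, is what makes $\{\varphi=\infty\}\cap Y$ a null set and hence the geometric sums in Step 2 and the identification of $Q^Y(s_1)$ in Step 3 legitimate.
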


\begin{proof}
Note that $Z^Y_n=Z^Y_{n-k}\circ T^k+k$ on $\{\varphi=k\}$ $(1\leq k \leq n)$ and $Z^Y_n=0$ on $\{\varphi>n\}$, and hence
\begin{equation}
   \exp(-s_2 Z^Y_n)
   =
   \begin{cases}
   \exp(-s_2 Z^Y_{n-k}\circ T^k) e^{-k s_2},
   &\text{on $\{\varphi=k\}$ $(1\leq k \leq n)$,}
   \\
   1,
   &\text{on $\{\varphi>n\}$.}	
   \end{cases}
\end{equation}
Therefore, for $n\in\bN_0$,
\begin{equation}
\begin{split}
	&\int_Y \bigg(e^{-ns_1}\exp(-s_2 Z^Y_n)\bigg)\,d\mu
	\\[5pt]
	&=\int_Y\bigg(\sum_{k=1}^n e^{-ns_1}
	\exp(-s_2 Z^Y_{n-k}\circ T^k)e^{-k s_2}1_{Y\cap \{\varphi=k\}}\bigg)
	\,d\mu
	+e^{-n s_1}\mu(Y\cap\{\varphi>n\})
	\\[5pt]
	&=
	\int_Y \sum_{k=1}^n \bigg(e^{-(n-k)s_1}
	\exp(-s_2 Z^Y_{n-k})\bigg)
	\bigg(e^{-k(s_1+s_2)}\hat{T}^k1_{Y\cap\{\varphi=k\}}\bigg)
	\,d\mu
	\\
	&\quad
	+e^{-ns_1}\mu(Y\cap\{\varphi>n\}).
\end{split}
\end{equation}
By taking the sum over $n\in\bN_{0}$, we get
\begin{equation}
\begin{split}
	&\int_Y \bigg( \sum_{n\geq0}e^{-ns_1}
	\exp(-s_2 Z^Y_n)\bigg)\,d\mu
	\\
	&=\int_Y
	\bigg(\sum_{n\geq0}e^{-ns_1}
	\exp(-s_2 Z^Y_n)\bigg)
	\bigg(\sum_{k\geq1}e^{-k(s_1+s_2)}
	\hat{T}^k1_{Y\cap\{\varphi=k\}}\bigg)\,d\mu
	+Q^Y(s_1),
\end{split}
\end{equation}
and hence
\begin{align}
\int_Y
	\bigg(\sum_{n\geq0}e^{-ns_1}\exp(-s_2 Z_n)\bigg)
	\bigg(1_Y-\sum_{k\geq1}e^{-k(s_1+s_2)}\hat{T}^k1_{Y\cap\{\varphi=k\}}\bigg)\,d\mu
	=
	Q^Y(s_1).
	\label{eq1:Lem:decompose-Z}	
\end{align}
As shown in \cite[(5.3)]{ThZw}, 
\begin{align}
	1_Y-\sum_{k\geq1}e^{-ks}\hat{T}^k1_{Y\cap\{\varphi=k\}}
	=
	(1-e^{-s})\sum_{n\geq0}e^{-ns}\hat{T}^n 1_{Y_n},
	\quad\text{a.e.\; $(s>0)$.}
	\label{eq2:Lem:decompose-Z}
\end{align}
Combining \eqref{eq1:Lem:decompose-Z} with \eqref{eq2:Lem:decompose-Z}, we obtain the desired result.
\end{proof}

\begin{Lem} \label{Lem2:decompose-Z}
Assume that the conditions \ref{Thm:Z-cond-1}, \ref{Thm:Z-cond-3}, \ref{Thm:Z-cond-4} of Theorem $\ref{Thm:Z}$  hold.
Let $q>0$ and let $\lambda:(0,\infty)\to(0,\infty)$ be a positive function with $\lambda(t)\to0$ $(t\to\infty)$.
 Then we have
\begin{equation}
	\int_0^\infty du\,e^{-qu}\int_Y d\mu_{H}\,\exp(-\lambda(t)Z^Y_{[ut]})
	\sim
	\frac{Q^Y (qt^{-1})}{(q+\lambda(t)t) Q^Y (qt^{-1}+\lambda(t))},
	\label{Lem:decompose-Z-1}  
\end{equation}
as $t\to\infty$.
\end{Lem}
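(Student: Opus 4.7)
The plan is to combine the exact identity of Lemma \ref{Lem:decompose-Z} with the approximation lemmas of Section \ref{Sec:Ana} so as to pass from the double integral on the left-hand side of \eqref{Lem:decompose-Z-1} to the ratio of $Q^Y$-values on the right-hand side. First, I apply Lemma \ref{Lem:discrete} with $S_n = Z_n^Y$ (noting that $0\le Z_n^Y\le n$, so $C=1$ is admissible) and $d\nu_t = H\,d\mu$. This converts the statement into showing
\[
\sum_{n=0}^\infty e^{-nqt^{-1}} \int_Y \exp(-\lambda(t)Z_n^Y)\,H\,d\mu
\;\sim\;
\frac{t\,Q^Y(qt^{-1})}{(q+\lambda(t)t)\,Q^Y(qt^{-1}+\lambda(t))}
\quad (t\to\infty).
\]

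Next, I apply Lemma \ref{Lem:decompose-Z} with $s_1=qt^{-1}$ and $s_2=\lambda(t)$. Set $v_n:=\hat{T}^n 1_{Y_n}$, which is supported on $Y$ since $Y_n\subset T^{-n}Y$, and satisfies $\int v_n\,d\mu = \mu(Y\cap\{\varphi>n\})$, so $\sum_{n\ge 0}e^{-ns}\int_Y v_n\,d\mu = Q^Y(s)$. Defining $H_s$ as in Lemma \ref{Lem:Ht} (with the choice $\lambda\mapsto s$), one has $\sum_{n\ge0}e^{-ns}v_n = Q^Y(s)\,H_s$, so the identity of Lemma \ref{Lem:decompose-Z} reads
\[
\int_Y \bigg(\sum_{n\ge 0} e^{-nqt^{-1}}\exp(-\lambda(t)Z_n^Y)\bigg)
H_{qt^{-1}+\lambda(t)}\,d\mu
= \frac{Q^Y(qt^{-1})}{(1-e^{-(qt^{-1}+\lambda(t))})\,Q^Y(qt^{-1}+\lambda(t))}.
\]

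The main step, and the principal obstacle, is to replace the moving density $H_{qt^{-1}+\lambda(t)}$ by the fixed limit $H$ inside the integral. For this I invoke Lemma \ref{Lem:IT} with $R_{n,t}=\exp(-\lambda(t)Z_n^Y)$ and $G=1_Y$, and must verify its four hypotheses: the CEMPT assumption comes from \ref{Thm:Z-cond-1}; the uniform sweeping of $\{H_{qt^{-1}+\lambda(t)}\}_{t\ge t_0}$ for $1_Y$ follows by applying Lemma \ref{Lem:Ht}\ref{Lem:Ht-1} to the positive function $t\mapsto qt^{-1}+\lambda(t)\to 0$, together with the uniform sweeping provided by \ref{Thm:Z-cond-3}; the required domination $\|R_{n,t}\circ T^k/R_{n+k,t}\|_{L^\infty(\mu)}\le e^k$ is exactly Lemma \ref{Lem:bdd-Z}; and the $L^\infty$-convergence $H_{qt^{-1}+\lambda(t)}\to H$ (both supported on $Y$) is Lemma \ref{Lem:Ht}\ref{Lem:Ht-2} using \ref{Thm:Z-cond-4}. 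This yields the asymptotic equivalence after replacing $H_{qt^{-1}+\lambda(t)}$ by $H$.

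Finally, the elementary asymptotic $1-e^{-s}\sim s$ as $s\to 0+$, applied to $s=qt^{-1}+\lambda(t)$, converts
\[
\frac{Q^Y(qt^{-1})}{(1-e^{-(qt^{-1}+\lambda(t))})\,Q^Y(qt^{-1}+\lambda(t))}
\;\sim\;
\frac{Q^Y(qt^{-1})}{(qt^{-1}+\lambda(t))\,Q^Y(qt^{-1}+\lambda(t))}
= \frac{t\,Q^Y(qt^{-1})}{(q+\lambda(t)t)\,Q^Y(qt^{-1}+\lambda(t))}.
\]
Chaining the four steps produces \eqref{Lem:decompose-Z-1}. The whole argument is essentially a translation of Lemma \ref{Lem:decompose-Z} into double-Laplace language; the delicate point is the identification of $\sum_n e^{-ns}v_n$ with $Q^Y(s)H_s$ and the consequent justification that the auxiliary densities $H_s$ are close enough to $H$ to be swapped inside the series, for which Lemmas \ref{Lem:IT}, \ref{Lem:Ht}, and \ref{Lem:bdd-Z} have been tailored.
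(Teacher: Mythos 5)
Your argument is correct and follows essentially the same route as the paper's proof: substitute $s_1=qt^{-1}$, $s_2=\lambda(t)$ into Lemma~\ref{Lem:decompose-Z}, recognize $\sum_{n\ge0}e^{-ns}\hat T^n 1_{Y_n}=Q^Y(s)H_s$, use Lemma~\ref{Lem:Ht} (with \ref{Thm:Z-cond-3}, \ref{Thm:Z-cond-4}) together with Lemmas~\ref{Lem:IT} and~\ref{Lem:bdd-Z} to replace the moving density by $H$, and finally pass between the series and the double Laplace integral via Lemma~\ref{Lem:discrete}. The only difference is cosmetic: you invoke Lemma~\ref{Lem:discrete} at the start rather than at the end of the chain, which is immaterial since every link is an asymptotic equivalence.
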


\begin{proof}
By substituting $s_1=qt^{-1}$ and $s_2=\lambda(t)$ into \eqref{Lem:decompose-Z-0}, we have
\begin{align}
	&\int_Y 
   \bigg(\sum_{n\geq0} e^{-nqt^{-1}}\exp(-\lambda(t) Z^Y_n)
   \bigg)
   \bigg(\sum_{n\geq0} e^{-n(qt^{-1}+\lambda(t))}
   \hat{T}^n 1_{Y_n}
   \bigg)\,d\mu
   \\[5pt]
   &=\frac{Q^Y(qt^{-1})}{1-e^{-qt^{-1}-\lambda(t)}}
   \sim \frac{Q^Y(qt^{-1})}{qt^{-1}+\lambda(t)}
   \quad (t\to\infty).
   \label{eq3:Lem:decompose-Z} 
\end{align}
For $t>0$, set
\begin{align}
    H_t= \frac{1}{Q^Y(qt^{-1}+\lambda(t))}\sum_{n\geq0} e^{-n(qt^{-1}+\lambda(t))}
   \hat{T}^n 1_{Y_n}.
   \label{def:Ht-ZY}	
\end{align}
By the assumption and Lemma \ref{Lem:Ht}, there exists $t_0>0$ such that $\{H_t\}_{t\geq t_0}$ is uniformly sweeping for $1_Y$, and $H_t\to H$ $(t\to\infty)$ in $L^\infty(\mu)$.
We use Lemmas \ref{Lem:discrete}, \ref{Lem:IT} and \ref{Lem:bdd-Z} with $S_n=Z^Y_n$ and $R_{n,t}=\exp(-\lambda(t)Z^Y_n)$ to get
\begin{align}
	&\text{(the left-hand side of \eqref{eq3:Lem:decompose-Z})}
   \\[5pt]
	&\sim
	\bigg(
	\int_Y 
   \sum_{n\geq0} e^{-nqt^{-1}}\exp(-\lambda(t)Z^Y_n)
   \,d\mu_H\bigg)
   Q^Y(qt^{-1}+\lambda(t))
   \\[5pt]
   &
   \sim
   \bigg(t
   \int_0^\infty du\,e^{-qu}\int_Y d\mu_{H}\,\exp(-\lambda(t)Z^Y_{[ut]})\bigg)
    Q^Y(qt^{-1}+\lambda(t))
   \quad(t\to\infty).
   \label{eq4:Lem:decompose-Z}
\end{align}
Combining \eqref{eq3:Lem:decompose-Z} with \eqref{eq4:Lem:decompose-Z}, we obtain the desired result.
\end{proof}

We now prove Theorems \ref{Thm:Z} and \ref{Thm:Z-weak} by using Lemmas \ref{Lem:decompose-Z} and \ref{Lem2:decompose-Z}. We imitate the proof of \cite[Theorem 2]{KasYan}.

\begin{proof}[Proof of Theorem $\ref{Thm:Z}$]
Set $c(t)=c([t])$ for $t>0$.
Let $q,\lambda>0$ be positive constants. By substituting $\lambda(t)=\lambda/(c(t)t)$ into \eqref{Lem:decompose-Z-1}, we see that
\begin{align}
 &
 \int_0^\infty du \,e^{-qu}
 \int_X d\mu_H\,
     \exp\bigg(\!-\frac{\lambda Z^Y_{[ ut]}}{c(t)t} \bigg)    
% &\sim
% \frac{Q^Y(c(t)^{-1}t^{-1})}{c(t)Q^Y(t^{-1})}\cdot
% \frac{Q^Y(qt^{-1})}
%      {(q+\lambda c(t)^{-1})Q^Y\Big((q+\lambda c(t)^{-1})t^{-1}\Big)}    
% \\[5pt]
 \sim
  \frac{c(t)Q^Y(qt^{-1})}
       {\lambda Q^Y(qt^{-1}+\lambda c(t)^{-1}t^{-1})}
   \quad
  (t\to\infty).	
   \label{eq:proof:Thm:Z-1}
\end{align}
By \eqref{eq:RV:Q}, \eqref{eq:c(n)} and the uniform convergence theorem for regular varying functions \cite[Theorem 1.5.2]{BGT}, we have $Q^Y(qt^{-1}+\lambda c(t)^{-1}t^{-1})\sim Q^Y(\lambda c(t)^{-1}t^{-1})$ $(t\to\infty)$ and
\begin{align}
	\frac{Q^Y(qt^{-1})}{Q^Y(qt^{-1}+\lambda c(t)^{-1}t^{-1})}
	&\sim
	\frac{Q^Y(qt^{-1})}{Q^Y(t^{-1})}\cdot
	\frac{Q^Y(t^{-1})}{Q^Y(\lambda c(t)^{-1}t^{-1})}
	\\
	&\sim
	\bigg(\frac{q c(t)}{\lambda}\bigg)^{-1+\alpha} \frac{\ell(t)}{\ell(c(t)t)}
	\quad(t\to\infty). 
	\label{eq:proof:Thm:Z-3}
\end{align}
Hence
\begin{align}
	&\frac{\ell(c(t)t)}{c(t)^\alpha\ell(t)}
	\int_0^\infty du \,e^{-qu}
 \int_X d\mu_H\,
     \exp\bigg(\!-\frac{\lambda Z^Y_{[ ut]}}{c(t)t} \bigg)
    \\
    &\to
     q^{-1+\alpha}\lambda^{-\alpha}
    =
    \frac{1}{\Gamma(1-\alpha)}
    \bigg(\int_0^\infty e^{-qu} u^{-\alpha}\,du\bigg)\lambda^{-\alpha}
    \quad
    (t\to\infty).
\end{align}
We use Lemma \ref{Lem:Laplace} to get, for $0<u<\infty$,
\begin{align}
	 &\frac{\ell(c(t)t)}{c(t)^\alpha\ell(t)} \int_X 
     \exp\bigg(\!-\frac{\lambda Z^Y_{[ut]}}{c(t)t} \bigg)\,d\mu_H
     \\[5pt]
     \notag
     &\to
     \frac{1}{\Gamma(1-\alpha)} u^{-\alpha}
       \lambda^{-\alpha} 
     =  \frac{\sin(\pi\alpha)}{\pi} u^{-\alpha}
         \int_0^\infty e^{-\lambda s} s^{-1+\alpha}\,ds
     \quad
     (t\to\infty).
\end{align}
Here we used Euler's reflection formula $\Gamma(\alpha)\Gamma(1-\alpha)=\pi/\sin(\pi\alpha)$.
By the extended continuity theorem for Laplace transforms of locally finite measures \cite[Chapter XIII.1, Theorem 2a]{Fel2}, for $0\leq s_0<\infty$,
\begin{align}
   &\frac{\ell(c(t)t)}{c(t)^\alpha\ell(t)}
    \mu_H\bigg(\frac{Z^Y_{[ut]}}{c(t)t} \leq s_0\bigg)
  \\[5pt]
   &\to
    \frac{\sin(\pi\alpha)}{\pi} u^{-\alpha}
    \int_0^{s_0} s^{-1+\alpha}\,ds
   =
   \frac{\sin(\pi\alpha)}{\pi\alpha}
   \bigg(\frac{s_0}{u}\bigg)^\alpha
   \quad
   (t\to\infty).
      \label{eq:proof:Thm:Z-5}
\end{align}
Therefore we substitute $t=n$ and $u=s_0=1$ into \eqref{eq:proof:Thm:Z-5} and then obtain
\begin{align}
	  \mu_H\bigg(\frac{Z^Y_n}{n} \leq c(n) \bigg)
  \sim
 \frac{\sin(\pi\alpha)}{\pi\alpha}\frac{c(n)^\alpha\ell(n)}{\ell(c(n)n)}
 \quad(n\to\infty),
\end{align}
which is the desired result.
\end{proof}

\begin{proof}[Proof of Theorem \textup{\ref{Thm:Z-weak}}]
Set $c(t)=c([t])$ for $t>0$. By Chebyshev's inequality,
\begin{align}
    \mu_G
    \bigg(\frac{Z^Y_{[t]}}{t}\leq c(t)\bigg)
    \leq
    e
    \int_X	
    \exp\bigg(- \frac{Z^Y_{[t]}}{c(t)t}\bigg)
    \,d\mu_G.
    \label{eq1:Thm:Z-weak}
\end{align}
For each $t>0$, the map $(0,\infty)\ni u\mapsto \int_X \exp(-Z_{[ut]}/(c(t)t))\,d\mu_G\in [0,\infty)$ is non-increasing. Hence we have
\begin{align}
   \int_X	
    \exp\bigg(- \frac{Z^Y_{[t]}}{c(t)t}\bigg)
    \,d\mu_G
    &\leq
    \int_0^1 du
    \int_X d\mu_G\,\exp\bigg(- \frac{Z^Y_{[ut]}}{c(t)t}\bigg)
    \\
    &\leq
    e
    \int_0^\infty du\,e^{-u}
    \int_X d\mu_G\,\exp\bigg(- \frac{Z^Y_{[ut]}}{c(t)t}\bigg)
    \\
    &\leq
    et^{-1}
    \sum_{n\geq0}
    e^{-nt^{-1}}
    \int_X
    \exp\bigg(- \frac{Z^Y_{n}}{c(t)t}\bigg)
    G\,d\mu.
    \label{eq2:Thm:Z-weak}	
\end{align}
Here we also used \eqref{Lem:DL-1-2}.
Define $H_t$ $(t>0)$ as in \eqref{def:Ht-ZY} with $q=1$.
By the assumption and Lemma \ref{Lem:Ht}, there exists $t_0>0$ such that $\{H_t\}_{t\geq t_0}$ is uniformly sweeping for $1_Y$, and hence for $G$.  
We use Lemma \ref{Lem:IT-weak} with $R_{n,t}=\exp(-Z_n/(c(t)t))$ to get
\begin{align}
    \sup_{t\geq t_0}
    \frac{\sum_{n\geq0}
    e^{-nt^{-1}}
    \int_X
    \exp(-Z^Y_n/(c(t)t))
    G\,d\mu}
    {\sum_{n\geq0}
    e^{-nt^{-1}}
    \int_Y
   \exp(-Z^Y_n/(c(t)t))
    H_t\,d\mu}
    <\infty.
    \label{eq3:Thm:Z-weak}	
\end{align}
By substituting $q=1$ and $\lambda(t)=c(t)^{-1}t^{-1}$ into \eqref{eq3:Lem:decompose-Z} and making a similar estimate as in \eqref{eq:proof:Thm:Z-3}, we see that
\begin{align}
	&t^{-1}\sum_{n\geq0}
    e^{-nt^{-1}}
    \int_X
    \exp\bigg(- \frac{Z^Y_{n}}{c(t)t}\bigg)
    H_t\,d\mu
    \\
    &=
    \frac{t^{-1}}{1-\exp(t^{-1}+c(t)^{-1}t^{-1})}
    \cdot
    \frac{Q^{Y}(t^{-1})}{Q^Y(t^{-1}+c(t)^{-1}t^{-1})}
    \sim
     \frac{ c(t)^\alpha\ell(t)}{\ell(c(t)t)}
     \quad(t\to\infty).	
     \label{eq4:Thm:Z-weak}
\end{align}
Combining \eqref{eq1:Thm:Z-weak} with \eqref{eq2:Thm:Z-weak}, \eqref{eq3:Thm:Z-weak}, \eqref{eq4:Thm:Z-weak}, we obtain
\begin{align}
   \limsup_{t\to\infty}
   \frac{\ell(c(t)t)}{c(t)^\alpha\ell(t)}
   \mu_G \bigg(\frac{Z^Y_{[t]}}{t}\leq c(t)\bigg)
   <\infty,	
\end{align}
as desired.
\end{proof}

\section{Proof of Theorem \ref{Thm:SY}}
\label{Sec:proof-SY}

Let us represent double Laplace transform of $S^Y_n$ in terms of $Q^Y(s)$. We also refer the reader to \cite[Lemma 5.1]{ThZw} for a similar formula.

\begin{Lem}\label{Lem:decompose-SY}
 Suppose that the condition \ref{Thm:SY-cond-1}  of Theorem $\ref{Thm:SY}$ is satisfied. Let $s_1>0$ and $s_2\geq0$.
 Then we have
\begin{align}
   &
   (1-e^{-s_2})\int_Y 
   \bigg(\sum_{n\geq0} e^{-ns_1}\exp(-s_2 S_n^Y)
   \bigg)\,d\mu
   \\
   &
   +
   (1-e^{-s_1})e^{-s_2}
   \int_Y 
   \bigg(\sum_{n\geq0} e^{-ns_1}\exp(-s_2 S_n^Y)
   \bigg)
   \bigg(\sum_{n\geq0} e^{-ns_1}
   \hat{T}^n 1_{Y_n}
   \bigg)\,d\mu
   \\
   &=
   Q^Y(s_1).
\label{Lem:decompose-SY-1}    	
\end{align}
\end{Lem}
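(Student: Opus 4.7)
The plan is to mimic the proof of Lemma \ref{Lem:decompose-Z} by decomposing $S_n^Y$ using the first return time $\varphi$, then reducing the resulting identity to the stated formula via the same renewal-type identity \eqref{eq2:Lem:decompose-Z} that appeared there.

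First, I would observe that for $x\in Y$ the occupation time splits cleanly according to $\varphi$: on $Y\cap\{\varphi=k\}$ with $1\le k\le n$ we have $T^k x\in Y$ and no visits to $Y$ at times $1,\ldots,k-1$, so $S_n^Y=1+S_{n-k}^Y\circ T^k$; on $Y\cap\{\varphi>n\}$ we have $S_n^Y=0$. Exponentiating, $\exp(-s_2 S_n^Y)=e^{-s_2}\exp(-s_2 S_{n-k}^Y\circ T^k)$ on $Y\cap\{\varphi=k\}$ and $\exp(-s_2 S_n^Y)=1$ on $Y\cap\{\varphi>n\}$.

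Next, integrating over $Y$, using the defining identity of the transfer operator to move $T^k$ across, and summing over $n\ge 0$ (interchange of sums is legitimate because all terms are nonnegative), I expect to arrive at
\begin{align}
\int_Y\!\bigg(\sum_{n\geq 0}e^{-ns_1}e^{-s_2 S_n^Y}\bigg)\bigg(1_Y-e^{-s_2}\sum_{k\geq 1}e^{-ks_1}\hat{T}^k 1_{Y\cap\{\varphi=k\}}\bigg)d\mu=Q^Y(s_1),
\end{align}
exactly as in the derivation of \eqref{eq1:Lem:decompose-Z}, with the sole difference that the factor $e^{-ks_2}$ absorbed from $Z_n^Y=Z_{n-k}^Y\circ T^k+k$ is replaced here by a single factor $e^{-s_2}$ because only one additional visit to $Y$ is recorded at time $k$.

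Finally, I plug in the renewal identity \eqref{eq2:Lem:decompose-Z}, rewritten as
\begin{align}
\sum_{k\geq 1}e^{-ks_1}\hat{T}^k 1_{Y\cap\{\varphi=k\}}=1_Y-(1-e^{-s_1})\sum_{n\geq 0}e^{-ns_1}\hat{T}^n 1_{Y_n},\quad\text{a.e.},
\end{align}
so that the kernel in the bracket becomes $(1-e^{-s_2})1_Y+e^{-s_2}(1-e^{-s_1})\sum_{n\geq 0}e^{-ns_1}\hat{T}^n 1_{Y_n}$. Splitting the integral according to these two pieces yields precisely the two terms on the left-hand side of \eqref{Lem:decompose-SY-1}. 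There is no real obstacle; the only point requiring care is the bookkeeping with the factor $e^{-s_2}$ (which decorates only the second term because the $\{\varphi>n\}$-contribution produces no $e^{-s_2}$), together with justifying the Fubini-type interchange, both of which are automatic from nonnegativity.
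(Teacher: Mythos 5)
Your proof is correct and takes essentially the same route as the paper's: the identical decomposition $S_n^Y=1+S_{n-k}^Y\circ T^k$ on $Y\cap\{\varphi=k\}$ (with the single factor $e^{-s_2}$ from the one extra visit at time $k$), passage through the transfer operator, summation over $n$, and then the renewal identity \eqref{eq2:Lem:decompose-Z}. The only cosmetic difference is that the paper first splits $1_Y-e^{-s_2}\sum_{k\geq1}e^{-ks_1}\hat{T}^k 1_{Y\cap\{\varphi=k\}}$ as $(1-e^{-s_2})1_Y+e^{-s_2}(1_Y-\sum_{k\geq1}\cdots)$ before invoking \eqref{eq2:Lem:decompose-Z}, whereas you substitute the renewal identity first and then split; the two are algebraically identical.
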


\begin{proof}
It is easy to see that $S_n^Y=S_{n-k}^Y\circ T^k+1$ on $\{\varphi=k\}$ $(1\leq k \leq n)$ and $S_n^Y=0$ on $\{\varphi>n\}$, which implies
\begin{equation}
	\exp(-s_2 S_n^Y)
	=
	\begin{cases}
	\exp(-s_2 S_{n-k}^Y\circ T^k)e^{-s_2},
	&\text{on $\{\varphi=k\}$, $1\leq k \leq n$,}
	\\
	1,
	&\text{on $\{\varphi>n\}$.}	
	\end{cases}
\end{equation}
Thus, for $n\in\bN_0$,
\begin{align}
	&\int_Y \bigg(e^{-ns_1}\exp(-s_2 S_n^Y)\bigg)d\mu
	\\
	&=
	\int_Y \sum_{k=1}^n\bigg(e^{-ns_1}\exp(-s_2 S_{n-k}^Y\circ T^k)e^{-s_2} 1_{Y\cap \{\varphi=k\}}\bigg)\,d\mu
	+e^{-ns_1}\mu(Y\cap \{\varphi>n\})
	\\
	&=
	e^{-s_2}\int_Y \sum_{k=1}^n 
	\bigg(e^{-(n-k)s_1}\exp(-\lambda(t)S_{n-k}^Y)\bigg)
	\bigg(e^{-ks_2}\hat{T}^k1_{Y\cap\{\varphi=k\}}\bigg)\,d\mu
	\\
	&\quad
	+e^{-ns_1}\mu(Y\cap\{\varphi>n\}).
\end{align}
By taking the sum over $n\in\bN_0$, we get
\begin{align}
	&\int_Y \bigg(\sum_{n\geq0}e^{-ns_1}\exp(-s_2 S_n^Y)\bigg)\,d\mu
	\\
	&=
	e^{-s_2}\int_Y  \bigg(\sum_{n\geq0} e^{-ns_1}
	\exp(-s_2 S_{n}^Y)\bigg)
	\bigg(\sum_{k\geq0}e^{-ks_1}\hat{T}^k1_{Y\cap\{\varphi=k\}}\bigg)\,d\mu
	+Q^Y(s_1),
\end{align}
and hence
\begin{align}
	&(1-e^{-s_2})\int_Y \bigg(\sum_{n\geq0}e^{-ns_1}\exp(-s_2 S_n^Y)\bigg)\,d\mu
	\\
	&+
	e^{-s_2}\int_Y \bigg(\sum_{n\geq0} e^{-ns_1}
	\exp(-s_2 S_{n}^Y)\bigg)\bigg(1_Y-\sum_{k\geq0}e^{-ks_1}\hat{T}^k1_{Y\cap\{\varphi=k\}}\bigg)\,d\mu
	\\
	&=Q^Y(s_1).
\end{align}
By using \eqref{eq2:Lem:decompose-Z}, we obtain the desired result.
\end{proof}

\begin{Lem}\label{Lem2:decompose-SY}
Assume that the conditions \ref{Thm:SY-cond-1} and \ref{Thm:SY-cond-3} of Theorem $\ref{Thm:SY}$ hold.
Let $q>0$ and let $\lambda:(0,\infty)\to(0,\infty)$ be a positive function with
\begin{equation}\label{eq1:Thm:SY-DL}
	\lambda(t)\to0\quad
	\text{and}
	\quad
	\frac{Q^Y(qt^{-1})}{\lambda(t)t}\to 0
	\quad
	(t\to\infty).
\end{equation}
Then we have
\begin{equation}
	\int_0^\infty du\,e^{-qu}
	\int_Y d\mu\,	
	\exp(-\lambda(t)S_{[ut]}^Y)
	\sim
	\frac{Q^Y(qt^{-1})}{\lambda(t)t}
	\quad(t\to\infty).
	\label{Lem:decompose-SY-2}  
\end{equation}
\end{Lem}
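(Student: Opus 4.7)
The plan is to substitute $s_1 = qt^{-1}$ and $s_2 = \lambda(t)$ into the identity of Lemma~\ref{Lem:decompose-SY}. Writing
$F_t(x) = \sum_{n\ge0} e^{-nqt^{-1}} \exp(-\lambda(t) S_n^Y(x))$ and
$B_t(x) = \sum_{n\ge0} e^{-nqt^{-1}} \hat{T}^n 1_{Y_n}(x)$,
this yields
\begin{equation}
(1-e^{-\lambda(t)}) \int_Y F_t\,d\mu + (1-e^{-qt^{-1}}) e^{-\lambda(t)} \int_X F_t B_t\,d\mu = Q^Y(qt^{-1}).
\end{equation}
Using \eqref{eq:1Yn}, each $\hat{T}^n 1_{Y_n}$ is supported on $Y$ (since $\hat{T}^k 1_{Y \cap \{\varphi = k\}}$ is supported on $Y$ by the defining property of the transfer operator), so $B_t$ is supported on $Y$ as well. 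My goal is to establish $\int_Y F_t\,d\mu \sim Q^Y(qt^{-1})/\lambda(t)$ as $t \to \infty$, after which Lemma~\ref{Lem:discrete}, applied with $\nu_t = 1_Y\,d\mu$ and $S_n = S_n^Y$, converts this discrete Abel sum into the double Laplace transform in the form $Q^Y(qt^{-1})/(\lambda(t)t)$.

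The key intermediate estimate is the uniform $L^\infty$-bound $\|B_t\|_{L^\infty(\mu)} \le C_0\, Q^Y(qt^{-1})$ with $C_0>0$ independent of $t$. I would derive this by summation by parts: writing $B_t = (e^{qt^{-1}}-1)\sum_{n\ge1} e^{-nqt^{-1}} \sum_{k=0}^{n-1} \hat{T}^k 1_{Y_k}$, hypothesis~\ref{Thm:SY-cond-3} gives $\sum_{k=0}^{n-1} \hat{T}^k 1_{Y_k} \le C w_n^Y$ in $L^\infty(\mu)$, and a second Abel summation reduces $\sum_{n\ge1} e^{-nqt^{-1}} w_n^Y$ to $e^{-qt^{-1}} Q^Y(qt^{-1})/(1-e^{-qt^{-1}})$, producing the claimed bound. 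Since $B_t$ is supported on $Y$, this gives $\int_X F_t B_t\,d\mu \le C_0\, Q^Y(qt^{-1}) \int_Y F_t\,d\mu$. Plugging back into the identity yields the upper bound $\int_Y F_t\,d\mu \le Q^Y(qt^{-1})/(1-e^{-\lambda(t)})$ and, after absorbing the cross term, the lower bound
\begin{equation}
\int_Y F_t\,d\mu \ge \frac{Q^Y(qt^{-1})}{(1-e^{-\lambda(t)}) + C_0 (1-e^{-qt^{-1}}) e^{-\lambda(t)} Q^Y(qt^{-1})}.
\end{equation}

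The hypothesis~\eqref{eq1:Thm:SY-DL} now enters decisively: since $(1-e^{-qt^{-1}}) Q^Y(qt^{-1}) \sim q t^{-1} Q^Y(qt^{-1}) = o(\lambda(t))$ as $t \to \infty$, the second term in the denominator is negligible compared to $(1-e^{-\lambda(t)}) \sim \lambda(t)$, and both inequalities yield $\int_Y F_t\,d\mu \sim Q^Y(qt^{-1})/\lambda(t)$, as required. The main obstacle is precisely this cross-term control: unlike the $Z_n^Y$ case of Lemma~\ref{Lem2:decompose-Z}, where the exponents $s_1$ and $s_2$ combine into $s_1+s_2$ in the $\hat{T}^n 1_{Y_n}$ sum so that Lemma~\ref{Lem:IT} and an asymptotic entrance density $H$ handle the term, here only $s_1 = qt^{-1}$ weights $B_t$ and no limiting density is available---one only has the $L^\infty$-boundedness from~\ref{Thm:SY-cond-3}. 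This is why the quantitative hypothesis~\eqref{eq1:Thm:SY-DL} must be imposed to ensure that the cross term is of strictly lower order.
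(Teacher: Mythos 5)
Your proposal is correct and mirrors the paper's own proof: both substitute $s_1=qt^{-1}$, $s_2=\lambda(t)$ into Lemma~\ref{Lem:decompose-SY}, use Abel summation together with~\ref{Thm:SY-cond-3} to bound $\|\sum_{n}e^{-nqt^{-1}}\hat{T}^n 1_{Y_n}\|_{L^\infty(\mu)}$ by a constant times $Q^Y(qt^{-1})$, invoke~\eqref{eq1:Thm:SY-DL} to show the cross term is negligible relative to the first term, and finish with Lemma~\ref{Lem:discrete}. The only cosmetic difference is that you phrase the conclusion via upper and lower bounds on $\int_Y F_t\,d\mu$, whereas the paper directly bounds the ratio of the second term to the first.
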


\begin{proof}
By substituting $s_1=qt^{-1}$ and $s_2=\lambda(t)$ into \eqref{Lem:decompose-SY-1}, we have
\begin{align}
   &
   (1-e^{-\lambda(t)})\int_Y 
   \bigg(\sum_{n\geq0} e^{-nqt^{-1}}\exp(-\lambda(t) S_n^Y)
   \bigg)\,d\mu
   \\
   &
   +
   (1-e^{-qt^{-1}})e^{-\lambda(t)}
   \int_Y 
   \bigg(\sum_{n\geq0} e^{-nqt^{-1}}\exp(-\lambda(t) S_n^Y)
   \bigg)
   \bigg(\sum_{n\geq0} e^{-nqt^{-1}}
   \hat{T}^n 1_{Y_n}
   \bigg)\,d\mu
   \\
   &=
   Q^Y(qt^{-1}).
   \label{eq1:Lem:decompose-SY}	
\end{align}
Let us prove the second term in the left-hand side of 
\eqref{eq1:Lem:decompose-SY} is negligibly small as $t\to\infty$.
By the assumption, there exists $C\in(0,\infty)$ such that, for any $n\in\bN$, we have $\|\sum_{k=0}^{n-1}\hat{T}^k 1_{Y_k}\|_{L^\infty(\mu)}\leq Cw^Y_n$. Let $s>0$. By Fubini's theorem,
\begin{align}
	\sum_{n\geq0}e^{-ns}\hat{T}^n 1_{Y_n}
	&=
	\sum_{n\geq0}\bigg((1-e^{-s})\sum_{k\geq n} e^{-ks}\bigg)
	\hat{T}^n 1_{Y_n}
    =
    (1-e^{-s})\sum_{k\geq0}e^{-ks}\bigg(\sum_{n=0}^k \hat{T}^n 1_{Y_n}\bigg),
\end{align}
and hence
\begin{align}
	\bigg\|\sum_{n\geq0}e^{-ks}\hat{T}^n 1_{Y_n}\bigg\|_{L^\infty(\mu)}
	&\leq
	(1-e^{-s})\sum_{k\geq0}e^{-ks}\bigg\|\sum_{n=0}^{k}\hat{T}^n 1_{Y_n}\bigg\|_{L^\infty(\mu)}
	\\
	&\leq
	C(1-e^{-s})\sum_{k\geq0}e^{-ks}w^Y_{k+1}
	\\
	&=C\sum_{n\geq0}e^{-ns}(w^Y_{n+1}-w^Y_n)
	=CQ^Y(s).
	\label{eq:bdd-Q}	
\end{align}
By using \eqref{eq1:Thm:SY-DL} and \eqref{eq:bdd-Q},
\begin{align}
 0&\leq 
% e^{-\lambda(t)}
%   (1-e^{-qt^{-1}})
%   \int_Y 
%   \bigg(\sum_{n\geq0} e^{-nqt^{-1}}
%   \exp(-\lambda(t)S_n^Y)
%   \bigg)
%   \bigg(\sum_{n\geq0} e^{-nqt^{-1}}
%   \hat{T}^n 1_{Y_n}
%   \bigg)\,d\mu
\frac{\text{(the second term in the left-hand side of \eqref{eq1:Lem:decompose-SY})}}
     {\text{(the first term in the left-hand side of \eqref{eq1:Lem:decompose-SY})}}
   \\[5pt]
   &\leq
   \frac{C(1-e^{-qt^{-1}})Q^Y(qt^{-1})}
        {e^{\lambda(t)}-1}
   \leq
   \frac{C
   qQ^Y(qt^{-1})}
   {\lambda(t)t}
   \to0
   \quad
   (t\to\infty).
    \label{eq3:Lem:decompose-SY}
\end{align}
On the other hand, Lemma \ref{Lem:discrete} yields
\begin{align}
	&\text{(the first term in the left-hand side of \eqref{eq1:Lem:decompose-SY})}
	\\
	&
	\sim
   \lambda(t)t\int_0^\infty du\,e^{-qu}\int_Y d\mu\,\exp(-\lambda(t)S_{[ut]}^Y)
   \quad(t\to\infty).
   \label{eq4:Lem:decompose-SY}
\end{align}
Combining \eqref{eq1:Lem:decompose-SY} with \eqref{eq3:Lem:decompose-SY} and \eqref{eq4:Lem:decompose-SY}, we obtain the desired result.
\end{proof}

We now prove Theorem \ref{Thm:SY} by using Lemma \ref{Lem2:decompose-SY}.

\begin{proof}[Proof of Theorem $\ref{Thm:SY}$]
Set $\tilde{c}(t)=\tilde{c}([t])$ for $t>0$.
	Let $q,\lambda>0$ be positive constants. By substituting $\lambda(t)=\lambda/(\tilde{c}(t)a(t))$ into \eqref{Lem:decompose-SY-2}, we have
\begin{align}
 &\frac{1}{\tilde c(t)}
  \int_0^\infty du \,e^{-qu}
 \int_Y d\mu\,
     \exp\bigg(\!-\frac{\lambda S^{Y}_{[ut]}}{\tilde{c}(t)a(t)}
       \bigg)
\sim
     \frac{1}{\Gamma(1+\alpha)}\frac{Q^Y(qt^{-1})}
     {\lambda Q^Y(t^{-1})}
     \quad(t\to\infty).
     \label{eq1:proof-Thm:SY}
\end{align}
By \eqref{eq:RV:Q}, 
\begin{align}   
     &\text{(the right-hand side of \eqref{eq1:proof-Thm:SY})}
     \\[5pt]
     &\to \frac{q^{-1+\alpha}\lambda^{-1}}{\Gamma(1+\alpha)}
     =
    \frac{\sin(\pi\alpha)}{\pi\alpha}
    \bigg(\int_0^\infty e^{-qu}u^{-\alpha}\,du\bigg) 
    \bigg(\int_0^\infty  e^{-\lambda s}\,ds\bigg)
     \quad(t\to\infty).	
\end{align}
Hence we use Lemma \ref{Lem:Laplace} to get, for $0<u<\infty$,
\begin{equation}
\frac{1}{\tilde{c}(t)}
\int_Y
\exp\bigg(\!-\frac{\lambda S^{Y}_{[ut]}}{\tilde{c}(t)a(t)}
       \bigg)\,d\mu
\to	
\frac{\sin(\pi\alpha)}{\pi\alpha}
u^{-\alpha}\int_0^\infty e^{-\lambda s}\, ds
\quad (t\to\infty).
\end{equation}
By the extended continuity theorem for Laplace transforms, for $0\leq s_0<\infty$,
\begin{equation}
	\frac{1}{\tilde{c}(t)}\mu_{1_Y}
	\bigg(\frac{S_{[ut]}^Y}{\tilde{c}(t)a(t)}\leq s_0\bigg)
	\to
	\frac{\sin(\pi\alpha)}{\pi\alpha}
	u^{-\alpha}
	\int_0^{s_0}ds=\frac{\sin(\pi\alpha)}{\pi\alpha}\frac{s_0}{u^\alpha}
	\quad
	(t\to\infty).
\end{equation}
By substituting $t=n$ and $u=s_0=1$, we obtain the desired result.
\end{proof}

\section{Proofs of Theorems \ref{Thm:SA} and \ref{Thm:SA-weak}}
\label{Sec:proof-SA}

We can also represent double Laplace transform of $S_n^{A_i}$ $(i=1,\dots,d)$ in terms of $Q^{Y,A_i}(s)$ $(i=1,\dots,d)$. We refer the reader to \cite[Lemma 6.1]{ThZw} and \cite[Proposition 5.1]{SerYan19} for similar formulae.

\begin{Lem}
Suppose the condition \ref{Thm:SA-cond-1}  of Theorem $\ref{Thm:SA}$ is satisfied. Let $s>0$ and $s_1,s_2,\dots,s_d\geq 0$. Then we have
\begin{align}
  &
   (1-e^{-s})\int_Y \bigg(
   \sum_{n\geq0}e^{-ns}\exp
	\bigg(-\sum_{j=1}^d s_j S_n^{A_j}\bigg)
   \bigg)\,d\mu
   \\[5pt]
   &+\sum_{i=1}^d
  (e^{s_i}-e^{-s})
   \int_Y 
   \bigg(
   \sum_{n\geq0}e^{-ns}\exp
	\bigg(-\sum_{j=1}^d s_j S_n^{A_j}\bigg)
   \bigg)
   \bigg(
   \sum_{n\geq1}e^{-n(s+s_i)}
   \hat{T}^n 1_{Y_n\cap A_i}
   \bigg)\,d\mu
   \\[5pt]
   &=
   \mu(Y)+
   \sum_{i=1}^d
   Q^{Y,A_i}(s+s_i).
\label{Lem:decompose-SA-1}    	
\end{align}
\end{Lem}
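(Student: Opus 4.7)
The strategy mirrors the proofs of Lemma~\ref{Lem:decompose-Z} and Lemma~\ref{Lem:decompose-SY}: decompose the orbit based on the first return time $\varphi$ to $Y$, and exploit the hypothesis that $Y$ dynamically separates $A_1,\dots,A_d$. The new bookkeeping must simultaneously track which $A_i$ the orbit enters upon leaving $Y$ and the Laplace variable $s_i$ weighting $S_n^{A_i}$.

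The recurrence to record is that, for $x\in Y$ and $n\geq 1$,
\begin{align*}
\exp\bigg(-\sum_j s_j S_n^{A_j}\bigg)
&= 1_{\{\varphi=1\}}\exp\bigg(-\sum_j s_j S_{n-1}^{A_j}\circ T\bigg) \\
&\quad + \sum_{i=1}^d \sum_{k=2}^n 1_{Y\cap T^{-1}A_i\cap\{\varphi=k\}}\, e^{-s_i(k-1)}\exp\bigg(-\sum_j s_j S_{n-k}^{A_j}\circ T^k\bigg) \\
&\quad + \sum_{i=1}^d 1_{Y\cap T^{-1}A_i\cap\{\varphi>n\}}\, e^{-ns_i},
\end{align*}
since, if $\varphi(x)=k\geq 2$ and $Tx\in A_i$, then dynamical separation forces $T^lx\in A_i$ for $l=1,\dots,k-1$ and $T^kx\in Y$, contributing exactly $k-1$ to $S_n^{A_i}$ and nothing to $S_n^{A_j}$ for $j\neq i$. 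Multiplying by $e^{-ns}$, summing over $n\geq 1$, integrating over $Y$, applying Fubini, and using the adjoint identity $\int (v\circ T^k)u\,d\mu = \int v\cdot \hat{T}^k u\,d\mu$, I obtain, with $U := \sum_{n\geq 0}e^{-ns}\exp(-\sum_j s_j S_n^{A_j})$,
\begin{equation*}
\int_Y U\,d\mu - \mu(Y) = e^{-s}\int_Y U\,\hat{T}1_{Y\cap\{\varphi=1\}}\,d\mu + \sum_i e^{s_i}\int_Y U\,V_i\,d\mu + \sum_i Q^{Y,A_i}(s+s_i),
\end{equation*}
with $V_i := \sum_{k\geq 2}e^{-k(s+s_i)}\hat{T}^k 1_{Y\cap T^{-1}A_i\cap\{\varphi=k\}}$; the tail term yields $Q^{Y,A_i}(s+s_i)$ directly from its definition.

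It then remains to show that the multiplier of $U$ produced after rearrangement, namely $M := 1_Y - e^{-s}\hat{T}1_{Y\cap\{\varphi=1\}} - \sum_i e^{s_i}V_i$, equals $(1-e^{-s})1_Y + \sum_i (e^{s_i}-e^{-s})W_i$ a.e.\ on $Y$, where $W_i := \sum_{n\geq 1}e^{-n(s+s_i)}\hat{T}^n 1_{Y_n\cap A_i}$. For this I would combine two ingredients. First, the conservativity identity $1_Y = \sum_{k\geq 1}\hat{T}^k 1_{Y\cap\{\varphi=k\}}$ a.e., which follows from the $\mu$-measure-preservation of the first-return map $T_Y$. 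Second, a geometric-series resummation of \eqref{eq:1YnAi},
\begin{equation*}
W_i = \sum_{k\geq 2}\frac{e^{-(s+s_i)}-e^{-k(s+s_i)}}{1-e^{-(s+s_i)}}\,\hat{T}^k 1_{Y\cap T^{-1}A_i\cap\{\varphi=k\}}.
\end{equation*}
The key algebraic identity $(e^{s_i}-e^{-s})\cdot\frac{e^{-(s+s_i)}-e^{-k(s+s_i)}}{1-e^{-(s+s_i)}} = e^{-s}(1-e^{-(k-1)(s+s_i)})$ then gives $\sum_i e^{s_i}V_i + \sum_i (e^{s_i}-e^{-s})W_i = e^{-s}\sum_{k\geq 2}\hat{T}^k 1_{Y\cap\{\varphi=k\}}$ a.e., and combining with the conservativity identity (which gives $\sum_{k\geq 2}\hat{T}^k 1_{Y\cap\{\varphi=k\}} = 1_Y - \hat{T}1_{Y\cap\{\varphi=1\}}$ a.e.) produces the desired expression for $M$.

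The main obstacle is the algebraic bookkeeping itself, in particular separating the $k=1$ case (where $\varphi=1$ forces $Tx\in Y$ and produces no $A_i$-contribution) from the $k\geq 2$ case (where $Tx$ lies in a unique $A_i$), and correctly matching the prefactors $e^{s_i}-e^{-s}$ with the ratios arising from the resummation of \eqref{eq:1YnAi}. Once the recurrence and the two ingredients above are in place, everything else is mechanical.
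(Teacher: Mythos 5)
Your proof is correct and follows essentially the same route as the paper: the same $\varphi$-based recurrence for $\exp(-\sum_j s_j S_n^{A_j})$, the same conservativity/return-map identity $1_Y = \sum_{k\geq1}\hat T^k 1_{Y\cap\{\varphi=k\}}$ (the paper phrases it as $1_Y=\hat T 1_{T^{-1}Y}$), and a geometric resummation of \eqref{eq:1YnAi} whose key algebraic identity $(e^{s_i}-e^{-s})\frac{e^{-(s+s_i)}-e^{-k(s+s_i)}}{1-e^{-(s+s_i)}}=e^{-s}(1-e^{-(k-1)(s+s_i)})$ is exactly the paper's \eqref{eq2:Lem:decompose-SA} in disguise. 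The only difference is cosmetic: you package the bookkeeping into auxiliary quantities $V_i$, $W_i$, $M$ rather than substituting inline.
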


\begin{proof}
Set
\begin{align}
	R_{n}
	=\exp
	\bigg(-\sum_{i=1}^d s_i S_n^{A_i}\bigg),
	\quad
	n\in\bN_0. 
\end{align}
Note that, for $n\in\bN$,
\begin{equation}
   R_{n}
   =
   \begin{cases}
   	R_{n-1}\circ T,
   	&\text{on $\{\varphi=1\}=T^{-1}Y$,}
   	\\
   	(R_{n-k}\circ T^{k})e^{-(k-1)s_i},
   	&\text{on $(T^{-1}A_i)\cap \{\varphi=k\}$
   	  $(1\leq i \leq d\;\; \text{and}\;\; 2\leq k \leq n)$,}
   	 \\
   	e^{-ns_i},
   	&\text{on $(T^{-1}A_i)\cap \{\varphi> n\}$ $(1\leq i \leq d)$.}
   \end{cases}
\end{equation}
Hence $\int_Y R_{0}\,d\mu=\mu(Y)$ and, for $n\in\bN$,
\begin{align}
   \int_Y &e^{-ns}R_{n}\,d\mu
   \\
   =& \int_{Y}
    e^{-ns}R_{n-1}\:\hat{T}1_{Y\cap T^{-1}Y}\, d\mu
   \\
    &+
    e^{-s}
    \sum_{i=1}^d
    \int_Y \sum_{k=2}^{n}
    \bigg(e^{-(n-k)s}R_{n-k}\bigg)\:
    \bigg(
    e^{-(k-1)(s+s_i)}
    \hat{T}^{k}1_{Y\cap (T^{-1}A_i)\cap\{\varphi=k\}}
    \bigg)
    \,d\mu
   \\
    &+\sum_{i=1}^d 
    e^{-n(s+s_i)}\mu(Y\cap (T^{-1}A_i)\cap\{\varphi>n\}).
\end{align}
By taking the sum over $n\in\bN_0$, we get
\begin{align}
	\int_Y &\bigg(\sum_{n\geq0}e^{-ns}R_{n}\bigg)\,d\mu
	\\
	=\;
	& \mu(Y)+e^{-s}\int_Y \bigg(\sum_{n\geq0}e^{-ns}R_{n}\bigg)\hat{T}1_{Y\cap T^{-1}Y}\,d\mu
	\\
	&+
	e^{-s}
	\sum_{i=1}^d
    \int_Y 
    \bigg(\sum_{n\geq0}
    e^{-ns}R_{n}\bigg)\:
    \bigg(
    \sum_{k\geq2}
    e^{-(k-1)(s+s_i)}
    \hat{T}^{k}1_{Y\cap (T^{-1}A_i)\cap\{\varphi=k\}}
    \bigg)\,
    d\mu
   \\
    &+\sum_{i=1}^d Q^{Y,A_i}(s+s_i).
\end{align}
By \eqref{eq:1YnAi}, we have
\begin{align}
   1_Y&=\hat{T}1_{T^{-1}Y}
      =\hat{T}1_{Y\cap T^{-1}Y}
       +\sum_{i=1}^d\hat{T}1_{A_i\cap T^{-1}Y}
       \\
       &=
      \hat{T}1_{Y\cap T^{-1}Y}
      +\sum_{i=1}^d \sum_{k\geq2}\hat{T}^k 1_{Y\cap (T^{-1}A_i)\cap \{\varphi=k\}},
\end{align}
which implies
\begin{align}
	&(1-e^{-s})\int_Y \bigg(\sum_{n\geq0}e^{-ns}R_{n}\bigg)\,d\mu
	\\
   &+\sum_{i=1}^d e^{-s}
     \int_Y
     \bigg( \sum_{n\geq0}e^{-ns}R_{n}\bigg)
     \bigg(\sum_{k\geq2}(1-e^{-(k-1)(s+s_i)})
      \hat{T}^k 1_{Y\cap (T^{-1}A_i)\cap \{\varphi=k\}}
      \bigg)\,d\mu
   \\
   &
   =\mu(Y)+\sum_{i=1}^d Q^{Y,A_i}(s+s_i). 
   \label{eq1:Lem:decompose-SA}   
\end{align}
In addition, we use \eqref{eq:1YnAi} to get, for $t>0$,
\begin{align}
	\sum_{k\geq2}(1-e^{-(k-1)t})
      \hat{T}^k 1_{Y\cap (T^{-1}A_i)\cap \{\varphi=k\}}
    &
    =
   \sum_{k\geq2} \bigg((e^t-1)\sum_{n=1}^{k-1}e^{-nt}\bigg)\hat{T}^k 1_{Y\cap (T^{-1}A_i)\cap \{\varphi=k\}}
    \\
    &=
    (e^t-1)\sum_{n\geq1} e^{-nt}\sum_{k> n}\hat{T}^k 1_{Y\cap (T^{-1}A_i)\cap \{\varphi=k\}}
    \\
    &=
    (e^t-1)\sum_{n\geq1} e^{-nt}\:\hat{T}^{n}1_{Y_n\cap A_i}.
    \label{eq2:Lem:decompose-SA}  
\end{align}
Combining \eqref{eq1:Lem:decompose-SA} with \eqref{eq2:Lem:decompose-SA}, we complete the proof. 
\end{proof}

\begin{Lem}\label{Lem:decompose-SA}
Assume that the conditions \ref{Thm:SA-cond-1},  \ref{Thm:SA-cond-3} and \ref{Thm:SA-cond-4} of Theorem $\ref{Thm:SA}$ hold.
Let $q>0$ be a positive constant and let $\lambda_i:(0,\infty)\to(0,\infty)$ $(i=1,\dots,d-1)$ be  positive functions with
\begin{gather}\label{Lem:decompose-SA-2}
	\lambda_i(t)\to0,\quad
	\lambda_i(t)t\to\infty
	\quad(i=1,\dots,d-1),
	\\
	\frac{Q^{Y,A_d}(qt^{-1})}
	     {\sum_{i=1}^{d-1}Q^{Y,A_i}(qt^{-1}+\lambda_i(t))}
	     \to \infty,
	\quad
	\frac{Q^{Y,A_d}(qt^{-1})}
	     {\sum_{i=1}^{d-1}\lambda_i(t)t Q^{Y,A_i}(qt^{-1}+\lambda_i(t))}
	     \to 0,
	     \label{Lem:decompose-SA-3}
\end{gather}
as $t\to\infty$. 
Then 
\begin{align}
	&
   \sum_{i=1}^{d-1}
   \lambda_i(t)
   \int_Y 
   \bigg(
   \sum_{n\geq0}e^{-nqt^{-1}}
   \exp\bigg(-\sum_{j=1}^{d-1} \lambda_j(t) S^{A_j}_{n}\bigg)\bigg)
   \bigg(\sum_{n\geq1}e^{-n(qt^{-1}+\lambda_i(t))}
           \hat{T}^n 1_{Y_n\cap A_i}\bigg)
   \,d\mu
   \\
   &\sim
   Q^{Y,A_d}(qt^{-1})
   \quad(t\to\infty).
\end{align}
%Moreover, we further assume the condition \ref{Thm:SA-cond-5} is also fulfilled. Then 
%\begin{align}
%   &
%   \sum_{i=1}^{d-1}
%      \lambda_i(t)t\, Q^{Y,A_i}(qt^{-1}+\lambda_i(t))
%   \int_0^\infty du\,e^{-qu}
%   \int_Y d\mu_{H^{(i)}}\,\exp\bigg(-\sum_{j=1}^{d-1} \lambda_j(t) S^{A_j}_{[ut]}\bigg)
%   \\
%   &\sim
%   Q^{Y,A_d}(qt^{-1})
%   \quad
%   (t\to\infty).
%	\label{Lem:decompose-SA-4}  
%\end{align}		

\end{Lem}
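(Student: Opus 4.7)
My plan is to extract the asymptotic from the identity \eqref{Lem:decompose-SA-1} by substituting $s=qt^{-1}$, $s_i=\lambda_i(t)$ for $i=1,\dots,d-1$, and $s_d=0$. Under this substitution, the right-hand side of \eqref{Lem:decompose-SA-1} equals
\[
\mu(Y)+\sum_{i=1}^{d-1}Q^{Y,A_i}(qt^{-1}+\lambda_i(t))+Q^{Y,A_d}(qt^{-1}),
\]
which is asymptotic to $Q^{Y,A_d}(qt^{-1})$ by the first condition in \eqref{Lem:decompose-SA-3} together with $Q^{Y,A_d}(qt^{-1})\to\infty$ (the latter following from \eqref{eq:RV:Qi} and $\alpha\in(0,1)$). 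I label the three left-hand terms $(\mathrm{I})$, $(\mathrm{II})$, $(\mathrm{III})$, where $(\mathrm{II})$ is the sum over $i=1,\dots,d-1$ and $(\mathrm{III})$ the $i=d$ summand, and write
\[
G_i(t)=\int_Y\bigg(\sum_{n\geq0}e^{-nqt^{-1}}\exp\bigg(-\sum_{j=1}^{d-1}\lambda_j(t)S_n^{A_j}\bigg)\bigg)\bigg(\sum_{n\geq1}e^{-n(qt^{-1}+\lambda_i(t))}\hat T^n 1_{Y_n\cap A_i}\bigg)\,d\mu.
\]
Since $\lambda_i(t)t\to\infty$ forces $\lambda_i(t)\gg qt^{-1}$, one has $e^{\lambda_i(t)}-e^{-qt^{-1}}=\lambda_i(t)(1+o(1))$, so $(\mathrm{II})$ is asymptotic to the target $\sum_{i=1}^{d-1}\lambda_i(t)G_i(t)$. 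The task therefore reduces to showing $(\mathrm{I})+(\mathrm{III})=o(Q^{Y,A_d}(qt^{-1}))$.

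The crucial ingredient is a refined upper bound on $F(t):=\int_Y\sum_{n\geq0}e^{-nqt^{-1}}\exp(-\sum_{j=1}^{d-1}\lambda_j(t)S_n^{A_j})\,d\mu$. I would first apply Lemma~\ref{Lem:Ht}\,\ref{Lem:Ht-1} with $v_n=\hat T^n 1_{Y_n\cap A_i}$ to promote condition \ref{Thm:SA-cond-4} to the statement that each normalized density $h_{i,t}:=Q^{Y,A_i}(qt^{-1}+\lambda_i(t))^{-1}\sum_{n\geq1}e^{-n(qt^{-1}+\lambda_i(t))}\hat T^n 1_{Y_n\cap A_i}$ is uniformly sweeping for $1_Y$ for all large $t$ and $i=1,\dots,d-1$. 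Together with the composition estimate Lemma~\ref{Lem:bdd-SA} applied to $R_{n,t}=\exp(-\sum_{j=1}^{d-1}\lambda_j(t)S_n^{A_j})$, Lemma~\ref{Lem:IT-weak} then produces a constant $C$ such that $F(t)\leq CG_i(t)/Q^{Y,A_i}(qt^{-1}+\lambda_i(t))$ for each $i=1,\dots,d-1$ and $t$ sufficiently large. Since $(\mathrm{I})$ and $(\mathrm{III})$ are nonnegative, the identity gives $(\mathrm{II})\leq \mathrm{RHS}=O(Q^{Y,A_d}(qt^{-1}))$, hence $\sum_{i=1}^{d-1}\lambda_i(t)G_i(t)=O(Q^{Y,A_d}(qt^{-1}))$. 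Multiplying the bound on $F(t)$ through by $\lambda_i(t)Q^{Y,A_i}(qt^{-1}+\lambda_i(t))$ and summing over $i$ then yields
\[
F(t)\sum_{i=1}^{d-1}\lambda_i(t)Q^{Y,A_i}(qt^{-1}+\lambda_i(t))=O(Q^{Y,A_d}(qt^{-1})).
\]

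The second condition in \eqref{Lem:decompose-SA-3} now immediately gives $(\mathrm{I})=(1-e^{-qt^{-1}})F(t)\to0$, which is $o(Q^{Y,A_d}(qt^{-1}))$ since $Q^{Y,A_d}(qt^{-1})\to\infty$. For $(\mathrm{III})$, I would use condition \ref{Thm:SA-cond-3} and the Fubini identity from \eqref{eq:bdd-Q} to bound $\|\sum_{n\geq1}e^{-nqt^{-1}}\hat T^n 1_{Y_n\cap A_d}\|_{L^\infty(\mu)}=O(Q^{Y,A_d}(qt^{-1}))$, so that $(\mathrm{III})\leq(1-e^{-qt^{-1}})F(t)\cdot O(Q^{Y,A_d}(qt^{-1}))$, which is again $o(Q^{Y,A_d}(qt^{-1}))$ by the same second condition in \eqref{Lem:decompose-SA-3}. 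The main obstacle is the bootstrap bound on $F(t)$: the crude pointwise estimate $F(t)\leq\mu(Y)/(1-e^{-qt^{-1}})$ would give only $(\mathrm{I})=O(1)$ and $(\mathrm{III})=O(Q^{Y,A_d}(qt^{-1}))$, neither of which closes the argument, so one must feed the a priori upper bound on $(\mathrm{II})$ pulled back from the identity itself into the uniform sweeping inequality in order to sharpen the estimate on $F(t)$.
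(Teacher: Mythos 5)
Your proof is correct and follows essentially the same route as the paper: the same substitution into \eqref{Lem:decompose-SA-1}, the same use of Lemma~\ref{Lem:Ht}~\ref{Lem:Ht-1} together with Lemmas~\ref{Lem:IT-weak} and~\ref{Lem:bdd-SA} to obtain the uniform-sweeping ratio bound between $F(t)$ and each $G_i(t)/Q^{Y,A_i}(qt^{-1}+\lambda_i(t))$, and the same $L^\infty$-bound on $H_t^{(d)}$ via the Fubini computation of \eqref{eq:bdd-Q}. The only difference is organizational: you first bootstrap $(\mathrm{II})\leq\mathrm{RHS}=O(Q^{Y,A_d}(qt^{-1}))$ into an explicit bound $F(t)\sum_i\lambda_i(t)Q^{Y,A_i}(qt^{-1}+\lambda_i(t))=O(Q^{Y,A_d}(qt^{-1}))$ and then show $(\mathrm{I}),(\mathrm{III})=o(Q^{Y,A_d}(qt^{-1}))$, whereas the paper cancels $F(t)$ by bounding the ratios $(\mathrm{I})/(\mathrm{II})$ and $(\mathrm{III})/(\mathrm{II})$ directly—both routes use the same inequality and close in the same way.
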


\begin{proof}
 Set
\begin{align}
	&R_{n,t}=\exp\bigg(-\sum_{j=1}^{d-1} \lambda_j(t) S^{A_j}_{n}\bigg)
	\quad(n\in\bN_0,\;t>0),
	\label{lambda_j(t) S^{A_j}_{n}}
   \\
   &
   H_t^{(i)}
   =\frac{\sum_{n\geq1}e^{-n(qt^{-1}+\lambda_i(t))}
           \hat{T}^n 1_{Y_n\cap A_i}}
         {Q^{Y,A_i}(qt^{-1}+\lambda_i(t))}
   \quad(t>0,\;i=1,\dots,d-1),
   \label{H_t^{(i)}}
   \\
   &H_t^{(d)}
   =
   \frac{\sum_{n\geq1}e^{-nqt^{-1}}
           \hat{T}^n 1_{Y_n\cap A_d}}
        {Q^{Y,A_d}(qt^{-1})}
        \quad(t>0).	
\end{align}
By the assumption and Lemma \ref{Lem:Ht}, there exists $t_0>0$ such that $\{H_t^{(i)}\}_{t\geq t_0,\;i=1,\dots,d-1}$ is uniformly sweeping for $1_Y$. In addition $\{H_t^{(d)}\}_{t>0}$ is $L^\infty(\mu)$-bounded, which follows from a similar calculation as in \eqref{eq:bdd-Q}.
   
By substituting $s=qt^{-1}$, $s_i=\lambda_i(t)$ $(i=1,\dots,d-1)$ and $s_d=0$ into \eqref{Lem:decompose-SA-1}, we have
\begin{align}
   &(1-e^{-qt^{-1}})\int_Y \bigg(
   \sum_{n\geq0}e^{-nqt^{-1}}R_{n,t}
   \bigg)\,d\mu
   \\
   &+
  \sum_{i=1}^{d-1}(e^{\lambda_i(t)}-e^{-qt^{-1}})
  Q^{Y,A_i}(qt^{-1}+\lambda_i(t))
   \int_Y 
   \bigg(
   \sum_{n\geq0}e^{-nqt^{-1}}R_{n,t}  \bigg)
   H_t^{(i)}\,d\mu
   \\
   &+(1-e^{-qt^{-1}})
   Q^{Y,A_d}(qt^{-1})
   \int_Y 
   \bigg(
   \sum_{n\geq0}e^{-nqt^{-1}}R_{n,t}
   \bigg)
   H_t^{(d)}\,d\mu
   \\
   &=\mu(Y)
   +
   \sum_{i=1}^{d-1} Q^{Y,A_i}(qt^{-1}+\lambda_i(t))+Q^{Y,A_d}(qt^{-1}).
   \label{eq3:Lem:decompose-SA}
\end{align}
Note that 
\begin{align}
	\text{(the right-hand side of \eqref{eq3:Lem:decompose-SA})}
	\sim
	Q^{Y,A_d}(qt^{-1})
	\quad(t\to\infty),
	\label{eq-right:Lem:decompose-SA}
\end{align}
since $Q^{Y,A_i}(s)\to\infty$ $(s\to 0+,\; i=1,\dots,d)$ and the assumption \eqref{Lem:decompose-SA-3}.

Let us prove the second term in the left-hand side of \eqref{eq3:Lem:decompose-SA} is the leading term, and the first and third terms in the left-hand side of \eqref{eq3:Lem:decompose-SA} are negligible as $t\to\infty$.
We use Lemmas \ref{Lem:IT-weak} and \ref{Lem:bdd-SA} with  $H_t=H^{(i)}_t$ and $G=1_Y$ to get
\begin{align}
    C=\limsup_{t\to\infty}
    \max_{1\leq i\leq d-1}
    \frac
    {\int_Y (\sum_{n\geq0}e^{-nqt^{-1}}R_{n,t})\,d\mu}
    {\int_Y (\sum_{n\geq0}e^{-nqt^{-1}}R_{n,t})H_t^{(i)}\,d\mu}
    <\infty.	
   \label{eq4:Lem:decompose-SA}
\end{align}
Hence
\begin{align}
0&\leq
	\limsup_{t\to\infty}
	\frac{\text{(the first term in the left-hand side of \eqref{eq3:Lem:decompose-SA})}}
	{\text{(the second term in the left-hand side of \eqref{eq3:Lem:decompose-SA})}}
	\\
   &\leq
   \limsup_{t\to\infty}
	\frac{qC}{\sum_{i=1}^{d-1}\lambda_i(t)t\, Q^{Y,A_i}(qt^{-1}+\lambda_i(t))}
	=0.
\label{eq6:Lem:decompose-SA}	
\end{align}
In addition, we use the assumption \eqref{Lem:decompose-SA-3} to see
\begin{align}
 0&\leq
	\limsup_{t\to\infty}
	\frac{\text{(the third term in the left-hand side of \eqref{eq3:Lem:decompose-SA})}}
	{\text{(the second term in the left-hand side of \eqref{eq3:Lem:decompose-SA})}}
	\\
   &\leq
   \bigg(\sup_{t>0}\|H_t^{(d)}\|_{L^\infty(\mu)}\bigg)\bigg(\limsup_{t\to\infty}
   \frac{qCQ^{Y,A_d}(qt^{-1})}{\sum_{i=1}^{d-1}\lambda_i(t)t\, Q^{Y,A_i}(qt^{-1}+\lambda_i(t))}\bigg)
   =0.	
\end{align}
Therefore we get
\begin{align}
  &\text{(the left-hand side of \eqref{eq3:Lem:decompose-SA})}
  \\
  &\sim
  \sum_{i=1}^{d-1}
  \lambda_i(t)
  Q^{Y,A_i}(qt^{-1}+\lambda_i(t))
  \int_Y \bigg(\sum_{n\geq0}e^{-nqt^{-1}}R_{n,t}\bigg)H_t^{(i)}\,d\mu
  \quad
  (t\to\infty).	
  \label{eq-left:Lem:decompose-SA}
\end{align}
Combining \eqref{eq3:Lem:decompose-SA} with \eqref{eq-right:Lem:decompose-SA} and \eqref{eq-left:Lem:decompose-SA}, we obtain the desired result.
\end{proof}

We now prove Theorems \ref{Thm:SA} and \ref{Thm:SA-weak} by using Lemma \ref{Lem:decompose-SA}.

\begin{proof}[Proof of Theorem $\ref{Thm:SA}$]
Set $c(t)=c([t])$ for $t>0$.
Let $q,\lambda,\lambda_1,\dots,\lambda_{d-1}>0$ and $\lambda_i(t)=\lambda\lambda_i/(c(t)t)$.  By \eqref{eq:c(n)}, \eqref{eq:RV:Qi} and the uniform convergence theorem for regular varying functions, we see
$Q^{Y,A_i}(qt^{-1}+\lambda_i(t))\sim Q^{Y,A_i}(\lambda_i(t))$ $(t\to\infty,\;i=1,\dots,d-1)$. By the Potter bounds for slowly varying functions, we see that $c(t)^{-1+\alpha}\ell(t)/\ell(c(t)t)\to\infty$ and $c(t)^{\alpha}\ell(t)/\ell(c(t)t)\to0$ $(t\to\infty)$. Thus, for $i=1,\dots,d-1$, 
\begin{align}
\frac{Q^{Y,A_d}(qt^{-1})}
	     {Q^{Y,A_i}(qt^{-1}+\lambda_i(t))}
	     &
	     \sim
	     \frac{Q^{Y,A_d}(qt^{-1})}{Q^{Y,A_i}(\lambda_i(t))}
	     \sim
	     \frac{\beta_d q^{-1+\alpha}}{\beta_i (\lambda\lambda_i)^{-1+\alpha}}
	     \frac{c(t)^{-1+\alpha}\ell(t)}{\ell(c(t)t)}
	      \to\infty
	      \quad(t\to\infty).	
\end{align}
and
\begin{align}
\frac{Q^{Y,A_d}(qt^{-1})}
	     {\lambda_i(t)t Q^{Y,A_i}(qt^{-1}+\lambda_i(t))}
&\sim
	     \frac{\beta_d q^{-1+\alpha}}{\beta_i (\lambda\lambda_i)^{\alpha}}
	     \frac{c(t)^\alpha\ell(t)}{\ell(c(t)t)}     
\to 0
	     \quad(t\to\infty).	
	   	      \label{eq1:proof-Thm:SA}     	
\end{align}
Therefore
\eqref{Lem:decompose-SA-2} and \eqref{Lem:decompose-SA-3} are fulfilled. Define $R_{n,t}$ $(n\in\bN_0,\;t>0)$ and $H_t^{(i)}$ $(t>0,\;i=1,\dots,d-1)$ as in \eqref{lambda_j(t) S^{A_j}_{n}} and \eqref{H_t^{(i)}}, respectively.
By Lemma \ref{Lem:decompose-SA},
\begin{align}
	&
    \sum_{i=1}^{d-1}
    \lambda_i(t)Q^{Y,A_i}(qt^{-1}+\lambda_i(t))
    \int_Y
   \bigg(
   \sum_{n\geq0}
   e^{-nqt^{-1}}
   R_{n,t}\bigg)
   H_t^{(i)}\,d\mu
   \\
   &\sim
   Q^{Y,A_d}(qt^{-1})
   \quad (t\to\infty).
    \label{eq2:proof-Thm:SA}
\end{align}
Set $\tilde{H}_t=\sum_{i=1}^{d-1}\beta_i\lambda_i^\alpha H_t^{(i)}$ $(t>0)$. Then we use \eqref{eq1:proof-Thm:SA} and \eqref{eq2:proof-Thm:SA} to get
\begin{align}
   &
   \frac{\ell(c(t)t)}{c(t)^\alpha \ell(t)}
    t^{-1}\int_Y
   \bigg(
   \sum_{n\geq0}
   e^{-nqt^{-1}}
   R_{n,t}\bigg)
   \tilde{H}_t\,d\mu
   \to
   \beta_d q^{-1+\alpha}\lambda^{-\alpha}
   \quad (t\to\infty).
    \label{eq3:proof-Thm:SA}
\end{align}
By the assumption and Lemma \ref{Lem:Ht}, there exists $t_0>0$ such that $\{\tilde{H}_t\}_{t\geq t_0}$ is uniformly sweeping for $1_Y$, and $\tilde{H}_t\to \tilde{H}$ in $L^\infty(\mu)$ $(t\to\infty)$. By Lemmas \ref{Lem:Laplace}, \ref{Lem:IT} and \ref{Lem:bdd-SA},
\begin{align}
   t^{-1}\int_Y\bigg(
   \sum_{n\geq0}
   e^{-nqt^{-1}}
  R_{n,t}\bigg)
  \tilde{H}_t\,d\mu
   &\sim
   t^{-1}\int_Y\bigg(
   \sum_{n\geq0}
   e^{-nqt^{-1}}
  R_{n,t}\bigg)
   \tilde{H}\,d\mu
   \\
   &\sim
  \int_0^\infty du\,e^{-qu}
   \int_Y
   d\mu_{\tilde{H}}\,
   R_{[ut],t}
   \quad(t\to\infty).
   \label{eq4:proof-Thm:SA}	
\end{align}
Thus we use similar arguments as in the proof of Theorem \ref{Thm:Z} to obtain
\begin{align}
   & \frac{\ell(c(t)t)}{c(t)^\alpha \ell(t)}
   	\mu_{\tilde{H}}\bigg(\frac{\sum_{j=1}^{d-1}\lambda_j S^{A_j}_{[ut]}}{c(t)t} \leq s_0\bigg)
  \to
   \beta_d \frac{\sin(\pi\alpha)}{\pi\alpha}
   \bigg(\frac{s_0}{u}\bigg)^\alpha
   \quad
   (t\to\infty,\;s_0,u>0).
   \label{eq5:proof-Thm:SA}
\end{align}
By substituting $t=n$ and $s_0=u=1$, we complete the proof.
\end{proof}

\begin{proof}[Proof of Theorem \textup{\ref{Thm:SA-weak}}]
Set $c(t)=c([t])$ $(t>0)$ and $\lambda_i(t)=\lambda_i/(c(t)t)$ $(t>0,\;i=1,\dots,d-1)$. Define $R_{n,t}$ $(n\in\bN_0,t>0)$ as in \eqref{lambda_j(t) S^{A_j}_{n}}. By Chebyshev's inequality,
\begin{align}
    \mu_G
    \bigg(\frac{\sum_{j=1}^{d-1}\lambda_j S^{A_j}_{[t]}}{t}\leq c(t)\bigg)
    \leq
    e
    \int_X	
    R_{[t],t}
    \,d\mu_G.
    \label{eq1:Thm:SA-weak}
\end{align}
For each $t>0$, the map $(0,\infty)\ni u\mapsto \int_X R_{[ut],t}\,d\mu_G\in [0,\infty)$ is non-increasing. Hence we have
\begin{align}
   \int_X	
     R_{[t],t}
    \,d\mu_G
    &\leq
    \int_0^1 du
    \int_X d\mu_G\, R_{[ut],t}
    \\
    &\leq
    e
    \int_0^\infty du\,e^{-u}
    \int_X d\mu_G\, R_{[ut],t}
    \\
    &\leq
    et^{-1}
    \sum_{n\geq0}
    e^{-nt^{-1}}
    \int_X
    R_{n,t}
    G\,d\mu.
    \label{eq2:Thm:SA-weak}	
\end{align}
Define $H_t^{(i)}$ $(t>0,\;i=1,\dots,d-1)$ by \eqref{H_t^{(i)}} with $q=1$, and set $\tilde{H}_t=\sum_{i=1}^{d-1}\beta_i\lambda_i^{\alpha} H_t^{(i)}$. Then Lemma \ref{Lem:Ht} implies that there exists $t_0>0$ such that $\{\tilde{H}_t\}_{t\geq t_0}$ is uniformly sweeping for $1_Y$, and hence for $G$. By Lemma \ref{Lem:IT-weak},
\begin{align}
	\sup_{t\geq t_0}
	\frac{\sum_{n\geq0}
    e^{-nt^{-1}}
    \int_X
    R_{n,t}G
    \,d\mu}
    {\sum_{n\geq0}
    e^{-nt^{-1}}
    \int_X
    R_{n,t}\tilde{H}_t
    \,d\mu}
    <\infty.
    \label{eq3:Thm:SA-weak}	
\end{align}
%By Potter's theorem \cite[Theorem 1.5.6]{BGT}, we see that $c(t)^{-1+\alpha}\ell(t)/\ell(c(t)t)\to\infty$ and $c(t)^{\alpha}\ell(t)/\ell(c(t)t)\to0$ $(t\to\infty)$.
%Thus we use similar arguments as in the proof of Theorem \ref{Thm:SA} to get, for $i=1,\dots,d-1$,
%\begin{align}
%   \frac{Q^{Y,A_d}(t^{-1})}{Q^{Y,A_i}(t^{-1}+\lambda_i(t))}
%   \sim
%   \frac{Q^{Y,A_d}(t^{-1})}{Q^{Y,A_i}(\lambda_i(t))}
%   \sim
%   	\frac{\beta_d c(t)^{-1+\alpha}}{\beta_i\lambda_i^{-1+\alpha}}\cdot\frac{\ell(t)}{\ell(c(t)t)}
%   	\to \infty
%   	\quad(t\to\infty),
%\end{align}
%and
%\begin{align}
%   \frac{Q^{Y,A_d}(t^{-1})}{\lambda_i(t)tQ^{Y,A_i}(t^{-1}+\lambda_i(t))}
%   \sim	
%   \frac{\beta_d c(t)^\alpha}{\beta_i\lambda_i^{\alpha}}
%   \cdot
%   \frac{\ell(t)}{\ell(c(t)t)}
%   \to0
%   \quad(t\to\infty).
%\end{align}
%Hence Lemma \ref{Lem:decompose-SA} yields that
By substituting $q=\lambda=1$ into \eqref{eq3:proof-Thm:SA}, we get
\begin{align}
	&t^{-1}
	\sum_{n\geq0}
    e^{-nt^{-1}}
    \int_X
    R_{n,t}\tilde{H}_t
    \,d\mu
%    \\
%    &\sim
%     c(t)^\alpha
%     \frac{\beta_d \ell(t)}{\ell(c(t)t)}
%    \sum_{i=1}^{d-1}
%    \frac{\lambda_i(t)Q^{Y,A_i}(t^{-1}+\lambda_i(t))}{Q^{Y,A_d}(t^{-1})}
%     \int_Y
%   \bigg(
%   \sum_{n\geq0}
%   e^{-nt^{-1}}
%   R_{n,t}\bigg)
%   H_t^{(i)}\,d\mu
%   \\
%   &
\sim
  \beta_d 
   \frac{c(t)^\alpha\ell(t)}{\ell(c(t)t)}
   \quad(t\to\infty). 
    \label{eq4:Thm:SA-weak}	
\end{align}
Combining \eqref{eq1:Thm:SA-weak} with 	\eqref{eq2:Thm:SA-weak}, \eqref{eq3:Thm:SA-weak} and \eqref{eq4:Thm:SA-weak}, we obtain the desired result.
\end{proof}

\section{Applications to Thaler's maps}\label{Sec:Thaler}

All of our abstract results in Section \ref{Sec:Main} can be applied to Thaler's maps \cite{Tha98, Tha02,SerYan19}, AFN maps \cite{ThZw},  Markov chains on multiray \cite{SerYan19} and random iterations of piecewise linear maps \cite{HataYano, NNTY}.
Theorems \ref{Thm:Z-weak}, \ref{Thm:SY} and \ref{Thm:SA-weak} can also be applied to ``unbalanced'' AFN maps, interval maps with critical points, and random walks driven by Gibbs--Markov maps \cite{Zwe07cpt}.   
For simplicity, we are going to focus only on Thaler's maps with two indifferent fixed points \cite{Tha02} in this section. 

\begin{Asmp}[Thaler's map]\label{Asmp:Thaler}
Suppose that the map $T:[0,1]\to[0,1]$ satisfies the following conditions:
\begin{enumerate}[label=\textup{(\roman*)}]
\item For some $c\in(0,1)$, the restrictions $T:[0,c)\to[0,1)$ and $T:(c,1]\to(0,1]$ are strictly increasing, onto and can be extended to $C^2$ maps $T_0:[0,c]\to[0,1]$ and $T_1:[c,1]\to[0,1]$ respectively.
\item $T_0'>1$ and $T_0''> 0$ on $(0,c]$, $T_1'>1$ and $T''_1<0$ on $[c,1)$ and $T'(0)=T'(1)=1$.
%\item $T$ is convex on some neighborhood of $0$, and concave on some neighborhood of $1$.
\item For some $p\in(1,\infty)$, $a\in(0,\infty)$ and some positive, measurable function $\ell^*:(0,\infty)\to(0,\infty)$ slowly varying at $0$ such that
\begin{align}
   Tx-x\sim a^{-p}(1-x-T(1-x))
   \sim
   x^{p+1}\ell^*(x)
   \quad(x\to0+).
\end{align}
\end{enumerate}
\end{Asmp}

In the following we always impose Assumption \ref{Asmp:Thaler}. Let us summarize the facts which are shown in  \cite{Tha80, Tha83, Zwe98, Zwe00, Tha02, ThZw}. After that we will explain applications of our abstract results to Thaler's maps. 

Let $f_i$ denote the inverse function of $T_i$ $(i=0,1)$.
Then $T$ admits an invariant density $h$ of the form
\begin{align}
	h(x)=h_0(x)\frac{x(1-x)}{(x-f_0(x))(f_1(x)-x)}
	\quad(x\in(0,1)),
\end{align}
where $h_0$ is continuous and positive on $[0,1]$. 
In addition $h$ has bounded variation on $[\varepsilon,1-\varepsilon]$ for any $\varepsilon\in(0,1/2)$.
Define the $\sigma$-finite measure $\mu$ as $d\mu(x)=h(x)\,dx$, $x\in[0,1]$. Then $\mu([0,\varepsilon])=\mu([1-\varepsilon,1])=\infty$ for any $\varepsilon\in(0,1)$, and $T$ is a CEMPT on the $\sigma$-finite measure space $([0,1],\cB([0,1]), \mu)$.

Let $\gamma\in(0,c)$ be a $2$-periodic point of $T$. Then $T\gamma\in(c,1)$.
Take $c_0\in(0,\gamma]$ and $c_1\in[T\gamma,1)$ arbitrarily, and set
%Then $([0,1], T, \{[0,c_0),[c_0,c], (c,c_1], (c_1,1]\})$ is an AFN-system in the sense of \cite{ThZw}. Set
\begin{align}
   A_0=[0, c_0),\quad
   Y=[c_0, c_1],\quad
   A_1=(c_1, 1].	
\end{align}
Then $\mu(Y)\in(0,\infty)$, $\mu(A_i)=\infty$ $(i=0,1)$ and $Y$ dynamically separates $A_0$ and $A_1$.
As shown in the proofs of \cite[Lemma 3]{Tha02} and \cite[Theorem 8.1]{ThZw}, 
there exist $\mu$-probability density functions $H^{(0)}, H^{(1)}:[0,1]\to [0,\infty)$ which are supported on $Y$, have bounded variations and satisfy
\begin{align}
	\lim_{n\to\infty}\frac{\hat{T}^n1_{Y\cap (T^{-1}A_i)\cap \{\varphi=n\} }}{\mu(Y\cap (T^{-1}A_i)\cap \{\varphi=n\})}
	=
	H^{(i)}
	\quad \text{in $L^\infty(\mu)$}\;\;(i=0,1).
\end{align}
Therefore we use \eqref{eq:1YnAi} and \eqref{eq:wYAi} to get
\begin{align}
	\lim_{n\to\infty}\bigg(\frac{1}{w_n^{Y,A_i}}
	 \sum_{k=0}^{n-1} \hat{T}^k 1_{Y_k\cap A_i}\bigg)
   = H^{(i)}
   \quad \text{in $L^\infty(\mu)$}
   \;\;(i=0,1).
   	\label{intermittent-Hi}
\end{align}
In addition,
\begin{align}
  &\mu(Y\cap (T^{-1}A_0)\cap \{\varphi=n\})
  \sim
  h(c)f_1'(0)(f_0^n(1)-f_0^{n+1}(1))
  \quad(n\to\infty),
  \\
  &\mu(Y\cap (T^{-1}A_1)\cap \{\varphi=n\})
  \sim
  h(c)f_0'(1)(f_1^{n+1}(0)-f_1^n(0))
  \quad
  (n\to\infty).	
\end{align}
Let
\begin{align}
   u_0(x)=\int_x^1 \frac{dy}{y-f_0(y)},
   \quad
   u_1(x)
   =
   \int_x^1 \frac{dy}{f_1(1-y)-(1-y)}
   \quad(x\in(0,1]).	
\end{align}
By \cite[Lemma 5]{Tha02} or \cite[Remark 1]{Zwe03},
\begin{align}
    f_0^n(1)\sim u_0^{-1}(n),
    \quad
    1-f_1^n(0)\sim u_1^{-1}(n)
    \quad(n\to \infty),	
\end{align}
which implies
\begin{align}
   w^{Y,A_0}_n
   \sim h(c)f_1'(0)\sum_{k=0}^{n-1}u_0^{-1}(k),
   \quad
   w^{Y,A_1}_n
   \sim h(c)f_0'(1)\sum_{k=0}^{n-1}u_1^{-1}(k)
   \quad(n\to\infty).	
\end{align}
Here we also used \eqref{eq:wYAi}.
Set
\begin{equation}
    \alpha=\frac{1}{p},
    \quad
    \beta_0=
    \frac{T'(c-)}{T'(c-)+T'(c+)a^{-1}}
    \quad
    \text{and}
    \quad
    \beta_1
    =
    1-\beta_0.	
\end{equation}
By the assumption and the basic properties of regular varying functions \cite[Theorems 1.5.11 and 1.5.12 and Corollary 1.7.3]{BGT}, there exists a positive function $\ell:(0,\infty)\to (0,\infty)$ slowly varying at $\infty$ such that
\begin{align}
   w_n^{Y,A_i}\sim
   \beta_i n^{1-\alpha}\ell(n)
   \quad(n\to\infty,\;i=0,1).
   \label{intermittent-wi}	
\end{align}
(For example, if $\ell^*(x)\sim C^*$ $(x\to0+)$ for some constant $C^*>0$, then $\ell(x)\sim C$ $(x\to\infty)$ for some constant $C>0$.)
By \eqref{intermittent-Hi} and \eqref{intermittent-wi}, we get
\begin{align}
  \lim_{n\to\infty}\bigg(\frac{1}{w_n^{Y}}
	 \sum_{k=0}^{n-1} \hat{T}^k 1_{Y_k}\bigg)
   = \beta_0H^{(0)}
     +\beta_1H^{(1)}
     =:H
     \quad \text{in $L^\infty(\mu)$},	
\end{align}
and
\begin{align}
w_n^{Y}\sim w_n^{Y,A_0}+w_n^{Y,A_1}\sim
   n^{1-\alpha}\ell(n)
   \quad(n\to\infty).	
\end{align}
Moreover, if $G:[0,1]\to [0,\infty)$ is Riemann integrable on $[0,1]$ with $\int_0^1 G(x)\,dx>0$, then $G$ is uniformly sweeping for $1_{[\varepsilon, 1-\varepsilon]}$ for any $\varepsilon\in(0,1/2)$, which  follows from \cite[Theorem 8.1]{ThZw}. 
Therefore $H, H^{(0)}, H^{(1)}$ are uniformly sweeping for $1_{[\varepsilon, 1-\varepsilon]}$ and hence for $1_Y$.
So we use our main results in Section \ref{Sec:Main} to obtain the following theorems.

\begin{Thm}
Let $\{c(n)\}_{n\geq 0}$ and $\{\tilde{c}(n)\}_{n\geq0}$ be positive-valued sequences satisfying \eqref{eq:c(n)} and \eqref{eq:tilde-c(n)}, respectively. Then we have \eqref{eq:Thm:Z}, \eqref{eq:Thm:SY} and
\begin{align}
   	 \mu_{H^{(i)}}\bigg(\frac{S_n^{A_i}}{n}\leq c(n)\bigg)
   \sim
   \frac{1-\beta_i}{\beta_i} \frac{\sin(\pi \alpha)}{\pi\alpha}
   \frac{c(n)^\alpha\ell(n)}{\ell(c(n)n)}
   \quad
   (n\to\infty,\;i=0,1).
\end{align}
\end{Thm}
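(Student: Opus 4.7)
The proof is essentially a verification exercise: the preceding summary has already assembled every ingredient needed for each of the abstract Theorems~\ref{Thm:Z}, \ref{Thm:SY}, and~\ref{Thm:SA} in the Thaler's-map setting. My plan is to organize the proof into three short steps, one for each of the three asymptotics.

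For \eqref{eq:Thm:Z}, I would simply check the four hypotheses of Theorem~\ref{Thm:Z}. Condition \ref{Thm:Z-cond-1} is the stated CEMPT property of $T$ on $([0,1],\cB([0,1]),\mu)$ with $\mu(Y)\in(0,\infty)$; \ref{Thm:Z-cond-2} is the regular variation $w_n^Y\sim n^{1-\alpha}\ell(n)$ recorded above; \ref{Thm:Z-cond-4} is the $L^\infty$-convergence of $(w_n^Y)^{-1}\sum_{k=0}^{n-1}\hat T^k 1_{Y_k}$ to $H=\beta_0 H^{(0)}+\beta_1 H^{(1)}$; and \ref{Thm:Z-cond-3} follows from \ref{Thm:Z-cond-4} together with the fact that $H$ is uniformly sweeping for $1_Y$ (via the Riemann-integrability criterion invoked from \cite[Theorem 8.1]{ThZw}), plus an application of Lemma~\ref{Lem:Ht}\ref{Lem:Ht-1} (or just the observation that an $L^\infty$-convergent tail of a sweeping sequence is eventually sweeping).

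For \eqref{eq:Thm:SY}, conditions \ref{Thm:SY-cond-1} and \ref{Thm:SY-cond-2} of Theorem~\ref{Thm:SY} coincide with \ref{Thm:Z-cond-1} and \ref{Thm:Z-cond-2}, while \ref{Thm:SY-cond-3} is the $L^\infty$-boundedness of $(w_n^Y)^{-1}\sum_{k=0}^{n-1}\hat T^k 1_{Y_k}$, which is immediate from its $L^\infty$-convergence to $H\in L^\infty(\mu)$. Theorem~\ref{Thm:SY} then gives \eqref{eq:Thm:SY} directly.

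For the final assertion I would apply Theorem~\ref{Thm:SA} with $d=2$ and $\lambda_1=1$, handling $i=0$ and $i=1$ by swapping the labels of $A_0$ and $A_1$. Concretely, to treat $i=0$ I play the role of the abstract $A_1$ with Thaler's $A_0$ and the role of $A_d=A_2$ with Thaler's $A_1$; the conditions \ref{Thm:SA-cond-1}--\ref{Thm:SA-cond-5} follow from the listed properties ($Y$ dynamically separates $A_0,A_1$; the regular variation $w_n^{Y,A_i}\sim\beta_i n^{1-\alpha}\ell(n)$; $L^\infty$-boundedness of $(w_n^{Y,A_1})^{-1}\sum\hat T^k 1_{Y_k\cap A_1}$ as a consequence of its $L^\infty$-convergence to $H^{(1)}$; uniform sweeping of $H^{(0)}$ for $1_Y$, again through the Riemann-integrable density argument and Lemma~\ref{Lem:Ht}; and the stated $L^\infty$-convergence to $H^{(0)}$). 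With $\tilde H=\beta_0 H^{(0)}$, Theorem~\ref{Thm:SA} yields
\begin{align}
\mu_{\beta_0 H^{(0)}}\!\bigg(\frac{S_n^{A_0}}{n}\leq c(n)\bigg)
\sim \beta_1\,\frac{\sin(\pi\alpha)}{\pi\alpha}\,\frac{c(n)^\alpha\ell(n)}{\ell(c(n)n)},
\end{align}
and dividing by $\beta_0$ together with $\beta_1=1-\beta_0$ gives the desired formula for $i=0$. The case $i=1$ is obtained by swapping the roles of $A_0$ and $A_1$.

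The only step requiring more than bookkeeping is verifying the uniform-sweeping hypothesis in \ref{Thm:SA-cond-4}; this is where I would lean on the combination of (a) $H^{(i)}$ being a bounded-variation density bounded below by a positive constant on a neighborhood of~$c$, so that $H^{(i)}\geq \delta\,1_{[\varepsilon,1-\varepsilon]}$ for suitable $\delta,\varepsilon$, (b) the standard fact \cite[Theorem~8.1]{ThZw} that $1_{[\varepsilon,1-\varepsilon]}$ is uniformly sweeping for $1_Y$ in finitely many steps, and (c) Lemma~\ref{Lem:Ht}\ref{Lem:Ht-1} to upgrade uniform sweeping of the tail to uniform sweeping of the full sequence $\{(w_n^{Y,A_i})^{-1}\sum_{k=0}^{n-1}\hat T^k 1_{Y_k\cap A_i}\}$ for $n$ large. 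All other conditions are either literal restatements of the compiled facts or trivial consequences of $L^\infty$-convergence, so no real obstacle is expected beyond careful bookkeeping of the $A_0$--$A_1$ relabeling.
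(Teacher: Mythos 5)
Your proposal matches the paper's approach: the paper gives no explicit proof for this theorem beyond the sentence ``So we use our main results in Section~\ref{Sec:Main} to obtain the following theorems,'' with the preceding paragraphs having already verified exactly the hypotheses you list, and your $A_0$--$A_1$ relabeling and the $\tilde H = \beta_0 H^{(0)}$ (resp.\ $\beta_1 H^{(1)}$) bookkeeping are correct. The one slight deviation is in the uniform-sweeping sub-argument: the paper's route is to observe that $H^{(i)}$ is Riemann integrable with positive Lebesgue integral and invoke \cite[Theorem~8.1]{ThZw} directly, rather than your route of first lower-bounding $H^{(i)}$ pointwise by $\delta\,1_{[\varepsilon,1-\varepsilon]}$ near~$c$; both work, since a BV function with positive integral is indeed bounded below on some interval.
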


\begin{Thm}\label{Thm:intermittent-bdd-below}
Assume $G\in\{u\in L^1(\mu)\::\:u\geq0\}$ admits a version which is Riemann integrable on $[0,1]$ with $\int_0^1 G(x)\,dx>0$. Then there exists some constant $C_1\in(0,\infty)$ such that, for any positive-valued sequences $\{c(n)\}_{n\geq 0}$ and $\{\tilde{c}(n)\}_{n\geq0}$ satisfying \eqref{eq:c(n)} and \eqref{eq:tilde-c(n)}, we have \eqref{eq:Cor2:Z-1}, \eqref{eq:Cor2:SY-1} and
\begin{align}
    C_1\leq \liminf_{n\to\infty}
    \frac{\ell(c(n)n)}{c(n)^\alpha\ell(n)}
    \mu_G\bigg(\frac{S_n^{A_i}}{n}\leq c(n)\bigg)
    \quad(i=0,1).	
\end{align}
\end{Thm}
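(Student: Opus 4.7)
The plan is to deduce each of the three lower bounds directly from the lower-bound halves of Corollaries \ref{Cor2:Z}, \ref{Cor2:SY} and \ref{Cor2:SA}, since the preceding paragraphs of this section already verify all the structural hypotheses of Theorems \ref{Thm:Z}, \ref{Thm:SY} and \ref{Thm:SA} (with $d=2$) for Thaler's map. The only hypothesis involving $G$ itself that still needs to be checked is that $G$ be uniformly sweeping for $1_Y$.

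To establish this uniform sweeping, I would fix $\varepsilon\in(0,1/2)$ small enough that $Y=[c_0,c_1]\subset[\varepsilon,1-\varepsilon]$, and invoke the fact recalled earlier in this section (from \cite[Theorem 8.1]{ThZw}) that any non-negative, Riemann integrable function on $[0,1]$ with positive Lebesgue integral is uniformly sweeping for $1_{[\varepsilon,1-\varepsilon]}$. Since $1_Y\leq 1_{[\varepsilon,1-\varepsilon]}$, this immediately upgrades to uniform sweeping of $G$ for $1_Y$.

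With this in hand, Corollary \ref{Cor2:Z}\ref{Cor2:Z-1} yields \eqref{eq:Cor2:Z-1} with some $C_{1,Z}\in(0,\infty)$, and Corollary \ref{Cor2:SY}\ref{Cor2:SY-1} yields \eqref{eq:Cor2:SY-1} with some $C_{1,Y}\in(0,\infty)$. For each $i\in\{0,1\}$ separately, I would apply Corollary \ref{Cor2:SA}\ref{Cor2:SA-1} with $d=2$ and $\lambda_1=1$, having the set $A_i$ play the role of the ``$A_1$'' of the abstract setup and the other infinite-measure set play the role of the ``$A_d$''. This gives the $S_n^{A_i}$ lower bound with some $C_{1,i}\in(0,\infty)$. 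Taking $C_1=\min\{C_{1,Z},C_{1,Y},C_{1,0},C_{1,1}\}$ then finishes the proof.

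The relabelling in the last step is legitimate because the analysis of Thaler's map is symmetric in the two infinite-measure ends: both $A_0$ and $A_1$ admit asymptotic entrance densities $H^{(i)}$ that are uniformly sweeping for $1_Y$, so each can play either role in Theorem \ref{Thm:SA}. Consequently there is no substantive obstacle here; all the genuine analytical work has already been absorbed into the abstract Corollaries \ref{Cor2:Z}, \ref{Cor2:SY}, \ref{Cor2:SA} and into the verification of their hypotheses for Thaler's map earlier in this section.
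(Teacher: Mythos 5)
Your proposal is correct and is precisely the argument the paper intends (the paper simply says ``we use our main results in Section \ref{Sec:Main}'' without spelling it out). The key observation that a Riemann integrable $G$ with positive integral is uniformly sweeping for $1_{[\varepsilon,1-\varepsilon]}$, and hence for $1_Y\subset[\varepsilon,1-\varepsilon]$, is exactly what enables all three of Corollaries \ref{Cor2:Z}\ref{Cor2:Z-1}, \ref{Cor2:SY}\ref{Cor2:SY-1} and \ref{Cor2:SA}\ref{Cor2:SA-1}; and the relabelling for $i=0,1$ in Corollary \ref{Cor2:SA} is indeed harmless since both $H^{(0)}$ and $H^{(1)}$ have already been shown to be $L^\infty(\mu)$-limits that are uniformly sweeping for $1_Y$, so conditions \ref{Thm:SA-cond-3}, \ref{Thm:SA-cond-4}, \ref{Thm:SA-cond-5} hold under either choice of which end plays the role of $A_d$.
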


\begin{Thm}\label{Thm:intermittent-bdd-above}
Assume $G\in\{u\in L^\infty(\mu)\::\:u\geq0 \}$ is supported on $[\varepsilon, 1-\varepsilon]$ for some $\varepsilon\in(0,1/2)$. Then there exists some constant $C_2\in(0,\infty)$ such that, for any positive-valued sequences $\{c(n)\}_{n\geq 0}$ and $\{\tilde{c}(n)\}_{n\geq0}$ satisfying \eqref{eq:c(n)} and \eqref{eq:tilde-c(n)}, we have \eqref{eq:Cor2:Z-2}, \eqref{eq:Cor2:SY-2} and
\begin{align}
    \limsup_{n\to\infty}
    \frac{\ell(c(n)n)}{c(n)^\alpha\ell(n)}
    \mu_G\bigg(\frac{S_n^{A_i}}{n}\leq c(n)\bigg)
    \leq C_2
    \quad(i=0,1).	
\end{align}	
\end{Thm}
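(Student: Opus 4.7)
The plan is to deduce the three asserted estimates directly from the abstract bounds in Corollaries \ref{Cor2:Z}\ref{Cor2:Z-2}, \ref{Cor2:SY}\ref{Cor2:SY-2}, and \ref{Cor2:SA}\ref{Cor2:SA-2}. The preceding paragraphs of Section \ref{Sec:Thaler} have already verified all the structural hypotheses of Theorems \ref{Thm:Z}, \ref{Thm:SY}, and \ref{Thm:SA} for Thaler's map $T$ with the reference set $Y=[c_0,c_1]$ and the two ``infinite'' branches $A_0=[0,c_0)$, $A_1=(c_1,1]$; in particular \ref{Thm:Z-cond-1}--\ref{Thm:Z-cond-4}, \ref{Thm:SY-cond-1}--\ref{Thm:SY-cond-3}, and \ref{Thm:SA-cond-1}--\ref{Thm:SA-cond-5} all hold. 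Therefore the only remaining input required to invoke those corollaries is the hypothesis that $1_Y$ is uniformly sweeping for $G$.

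The key step is thus to establish this uniform sweeping property. By hypothesis $G\in L^\infty(\mu)$ is supported on $[\varepsilon,1-\varepsilon]$, so one has the pointwise bound
\begin{equation}
    G\leq \|G\|_{L^\infty(\mu)}\, 1_{[\varepsilon,1-\varepsilon]}
    \quad\text{a.e.}
\end{equation}
On the other hand, the fact recalled from \cite[Theorem 8.1]{ThZw} says that every Riemann integrable function on $[0,1]$ with positive Lebesgue integral is uniformly sweeping for $1_{[\varepsilon,1-\varepsilon]}$; applying this to the Riemann integrable function $1_Y$ yields constants $C>0$ and $K\in\bN_0$ with $C\sum_{k=0}^K \hat{T}^k 1_Y\geq 1_{[\varepsilon,1-\varepsilon]}$ a.e. Combining these two inequalities gives
\begin{equation}
    C\|G\|_{L^\infty(\mu)}\sum_{k=0}^K \hat{T}^k 1_Y\geq G
    \quad\text{a.e.},
\end{equation}
which is precisely the statement that $1_Y$ is uniformly sweeping for $G$ in $K$ steps.

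With this input in hand, the three conclusions follow immediately. Corollary \ref{Cor2:Z}\ref{Cor2:Z-2} applied to $G$ yields \eqref{eq:Cor2:Z-2}; Corollary \ref{Cor2:SY}\ref{Cor2:SY-2} yields \eqref{eq:Cor2:SY-2}; and for each $i\in\{0,1\}$, Corollary \ref{Cor2:SA}\ref{Cor2:SA-2} applied in the two-branch setting (with $d=2$, the reference branch chosen to be $A_{1-i}$, and the weight $\lambda_1=1$ on $A_i$) produces the constant $C_2^{(i)}$ for which
\begin{equation}
    \limsup_{n\to\infty}\frac{\ell(c(n)n)}{c(n)^\alpha\ell(n)}\mu_G\bigg(\frac{S_n^{A_i}}{n}\leq c(n)\bigg)\leq C_2^{(i)}.
\end{equation}
Taking $C_2$ to be the maximum of the three constants obtained above completes the proof. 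There is no genuine obstacle here: the only nontrivial point is the one-line domination argument establishing uniform sweeping, and the rest is an assembly of already-proven facts.
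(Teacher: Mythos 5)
Your proposal is correct and matches what the paper intends: the paper does not give a line-by-line proof of Theorem \ref{Thm:intermittent-bdd-above}, but simply asserts that it follows from the main results after the structural hypotheses have been verified in the preceding paragraphs of Section \ref{Sec:Thaler}. The one-line domination $G\leq\|G\|_{L^\infty(\mu)}1_{[\varepsilon,1-\varepsilon]}$ combined with the fact (from \cite[Theorem 8.1]{ThZw}) that $1_Y$ is Riemann integrable with positive integral and hence uniformly sweeping for $1_{[\varepsilon,1-\varepsilon]}$ is precisely the missing input, and your application of Corollaries \ref{Cor2:Z}\ref{Cor2:Z-2}, \ref{Cor2:SY}\ref{Cor2:SY-2}, and \ref{Cor2:SA}\ref{Cor2:SA-2} (with the role of the distinguished branch $A_d$ played by $A_{1-i}$, $d=2$, $\lambda_1=1$) is carried out correctly.
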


\begin{Rem}
Let $\nu$ be a probability measure on $[0,1]$ which is supported on $[\varepsilon, 1-\varepsilon]$ for some $\varepsilon\in(0,1/2)$ and admits a Riemann integrable density $u$ with respect the Lebesgue measure. Then $G=u/h$ is also supported on $[\varepsilon, 1-\varepsilon]$ and Riemann integrable, and hence Theorems \ref{Thm:intermittent-bdd-below} and \ref{Thm:intermittent-bdd-above} can be applied to $\nu=\mu_{G}$.  	
\end{Rem}

\subsection*{Acknowledgements}

I am grateful to Professor Kouji Yano and Professor Yuko Yano for valuable discussions, and to Professor Alain Rouault for bringing the paper \cite{RYZ} to my attention.
This research was partially supported by JSPS KAKENHI Grant Number JP23K19010, and by the Research Institute for Mathematical Sciences, an International Joint Usage/Research Center located in Kyoto University.

%Therefore $\hat{T}^n 1_{Y\cap (T^{-1}A_i)\cap \{\varphi=n\}}$ and $H_i$ $(n\geq 2,\;i=1,2)$ are uniformly sweeping for $1_Y$, and hence so are $\{(w_n^{Y, A_i})^{-1}\sum_{k=0}^n \hat{T}^k 1_{Y_k\cap A_i}\}_{n\geq1}$ $(i=1,2)$. 

%\bibliographystyle{alpha}
\bibliographystyle{plain}

\begin{thebibliography}{10}

\bibitem{Aa81}
J. Aaronson.
\newblock The asymptotic distributional behaviour of transformations preserving
  infinite measures.
\newblock {\em J. Analyse Math.}, 39:203--234, 1981.

\bibitem{Aa86}
J. Aaronson.
\newblock Random {$f$}-expansions.
\newblock {\em Ann. Probab.}, 14(3):1037--1057, 1986.

\bibitem{Aa97}
J. Aaronson.
\newblock {\em An introduction to infinite ergodic theory}, volume~50 of {\em
  Mathematical Surveys and Monographs}.
\newblock American Mathematical Society, Providence, RI, 1997.

\bibitem{AdlWei}
R.~L. Adler and B. Weiss.
\newblock The ergodic infinite measure preserving transformation of {B}oole.
\newblock {\em Israel J. Math.}, 16:263--278, 1973.

\bibitem{BPY}
M. Barlow, J. Pitman, and M. Yor.
\newblock Une extension multidimensionnelle de la loi de l'arc sinus.
\newblock In {\em S\'{e}minaire de {P}robabilit\'{e}s, {XXIII}}, volume 1372 of
  {\em Lecture Notes in Math.}, pages 294--314. Springer, Berlin, 1989.

\bibitem{BGT}
N.~H. Bingham, C.~M. Goldie, and J.~L. Teugels.
\newblock {\em Regular variation}, volume~27 of {\em Encyclopedia of
  Mathematics and its Applications}.
\newblock Cambridge University Press, Cambridge, 1987.

\bibitem{DK}
D.~A. Darling and M.~Kac.
\newblock On occupation times for {M}arkoff processes.
\newblock {\em Trans. Amer. Math. Soc.}, 84:444--458, 1957.

\bibitem{Fel2}
W. Feller.
\newblock {\em An introduction to probability theory and its applications.
  {V}ol. {II}}.
\newblock John Wiley \& Sons, Inc., New York-London-Sydney, second edition,
  1971.

\bibitem{HataYano}
G. Hata and K. Yano.
\newblock Arcsine and {D}arling-{K}ac laws for piecewise linear random interval
  maps.
\newblock {\em Stoch. Dyn.}, 23(1):Paper No. 2350006, 24, 2023.

\bibitem{KasYan}
Y. Kasahara and Y. Yano.
\newblock On a generalized arc-sine law for one-dimensional diffusion
  processes.
\newblock {\em Osaka J. Math.}, 42(1):1--10, 2005.

\bibitem{KocZwe}
D. Kocheim and R. Zweim\"{u}ller.
\newblock A joint limit theorem for compactly regenerative ergodic
  transformations.
\newblock {\em Studia Math.}, 203(1):33--45, 2011.

\bibitem{MelTer}
I. Melbourne and D. Terhesiu.
\newblock Operator renewal theory and mixing rates for dynamical systems with
  infinite measure.
\newblock {\em Invent. Math.}, 189(1):61--110, 2012.

\bibitem{NNTY}
F. Nakamura, Y. Nakano, H. Toyokawa, and K. Yano.
\newblock Arcsine law for random dynamics with a core.
\newblock {\em Nonlinearity}, 36(3):1491--1509, 2023.

\bibitem{OwaSam}
T. Owada and G. Samorodnitsky.
\newblock Functional central limit theorem for heavy tailed stationary
  infinitely divisible processes generated by conservative flows.
\newblock {\em Ann. Probab.}, 43(1):240--285, 2015.

\bibitem{Pol48}
H. Pollard.
\newblock The completely monotonic character of the {M}ittag-{L}effler function
  {$E_a(-x)$}.
\newblock {\em Bull. Amer. Math. Soc.}, 54:1115--1116, 1948.

\bibitem{RYZ}
A. Rouault, M. Yor, and M. Zani.
\newblock A large deviations principle related to the strong arc-sine law.
\newblock {\em J. Theoret. Probab.}, 15(3):793--815, 2002.


\bibitem{Ser20}
T. Sera.
\newblock Functional limit theorem for occupation time processes of
  intermittent maps.
\newblock {\em Nonlinearity}, 33(3):1183--1217, 2020.

\bibitem{SerYan19}
T. Sera and K. Yano.
\newblock Multiray generalization of the arcsine laws for occupation times of
  infinite ergodic transformations.
\newblock {\em Trans. Amer. Math. Soc.}, 372(5):3191--3209, 2019.

\bibitem{Ter14}
D. Terhesiu.
\newblock Error rates in the {D}arling-{K}ac law.
\newblock {\em Studia Math.}, 220(2):101--117, 2014.

\bibitem{Ter15}
D. Terhesiu.
\newblock Improved mixing rates for infinite measure-preserving systems.
\newblock {\em Ergodic Theory Dynam. Systems}, 35(2):585--614, 2015.

\bibitem{Ter16}
D. Terhesiu.
\newblock Mixing rates for intermittent maps of high exponent.
\newblock {\em Probab. Theory Related Fields}, 166(3-4):1025--1060, 2016.

\bibitem{Tha80}
M. Thaler.
\newblock Estimates of the invariant densities of endomorphisms with
  indifferent fixed points.
\newblock {\em Israel J. Math.}, 37(4):303--314, 1980.

\bibitem{Tha83}
M. Thaler.
\newblock Transformations on {$[0,\,1]$} with infinite invariant measures.
\newblock {\em Israel J. Math.}, 46(1-2):67--96, 1983.

\bibitem{Tha98}
M. Thaler.
\newblock The {D}ynkin-{L}amperti arc-sine laws for measure preserving
  transformations.
\newblock {\em Trans. Amer. Math. Soc.}, 350(11):4593--4607, 1998.

\bibitem{Tha02}
M. Thaler.
\newblock A limit theorem for sojourns near indifferent fixed points of
  one-dimensional maps.
\newblock {\em Ergodic Theory Dynam. Systems}, 22(4):1289--1312, 2002.

\bibitem{Tha05}
M. Thaler.
\newblock Asymptotic distributions and large deviations for iterated maps with
  an indifferent fixed point.
\newblock {\em Stoch. Dyn.}, 5(3):425--440, 2005.

\bibitem{ThZw}
M. Thaler and R. Zweim\"{u}ller.
\newblock Distributional limit theorems in infinite ergodic theory.
\newblock {\em Probab. Theory Related Fields}, 135(1):15--52, 2006.

\bibitem{Wat95}
S. Watanabe.
\newblock Generalized arc-sine laws for one-dimensional diffusion processes and
  random walks.
\newblock In {\em Stochastic analysis ({I}thaca, {NY}, 1993)}, volume~57 of
  {\em Proc. Sympos. Pure Math.}, pages 157--172. Amer. Math. Soc., Providence,
  RI, 1995.

\bibitem{Ya.Y17}
Y. Yano.
\newblock On the joint law of the occupation times for a diffusion process on
  multiray.
\newblock {\em J. Theoret. Probab.}, 30(2):490--509, 2017.

\bibitem{Zwe98}
R. Zweim\"{u}ller.
\newblock Ergodic structure and invariant densities of non-{M}arkovian interval
  maps with indifferent fixed points.
\newblock {\em Nonlinearity}, 11(5):1263--1276, 1998.

\bibitem{Zwe00}
R. Zweim\"{u}ller.
\newblock Ergodic properties of infinite measure-preserving interval maps with
  indifferent fixed points.
\newblock {\em Ergodic Theory Dynam. Systems}, 20(5):1519--1549, 2000.

\bibitem{Zwe03}
R. Zweim\"{u}ller.
\newblock Stable limits for probability preserving maps with indifferent fixed
  points.
\newblock {\em Stoch. Dyn.}, 3(1):83--99, 2003.

\bibitem{Zwe07cpt}
R. Zweim\"{u}ller.
\newblock Infinite measure preserving transformations with compact first
  regeneration.
\newblock {\em J. Anal. Math.}, 103:93--131, 2007.

\end{thebibliography}

\end{document}